\documentclass{article}
\usepackage[utf8]{inputenc}

\title{Topoi with enough points and topological groupoids}
\author{J. L. Wrigley\footnote{School of Mathematical Sciences, Queen Mary University of London, Mile End Road, London E1 4NS, UK, email: \texttt{j.wrigley@qmul.ac.uk}}}




\usepackage[only,llbracket,rrbracket,shortleftarrow,shortrightarrow,mapsfrom]{stmaryrd}

\usepackage[dvipsnames]{xcolor}
\definecolor{Forest}{rgb}{0.13, 0.55, 0.13}
\definecolor{Darkcyan}{rgb}{0.0, 0.55, 0.55}

\usepackage[pdfencoding=auto,pdfusetitle]{hyperref}
\hypersetup{colorlinks=true, linkcolor={black}, citecolor={black}, urlcolor={black}}

\usepackage{amssymb}
\usepackage{amsmath}
\usepackage{amsthm}
\usepackage{tikz-cd}
\usepackage{fancyhdr}
\usepackage{mathtools}
\usepackage{enumitem}
\usepackage{extpfeil}
\usepackage{bussproofs}
\usepackage{mathtools}
\usepackage{scalerel}
\usepackage{booktabs}

\usepackage[noabbrev,capitalise]{cleveref}
\crefformat{equation}{(#2#1#3)}
\crefformat{enumi}{#2#1#3}
\crefformat{enumii}{#2#1#3}

%
%

\NeedsTeXFormat{LaTeX2e}
\ProvidesPackage{quiver}[2021/01/11 quiver]

\RequirePackage{tikz-cd}
\RequirePackage{amssymb}
\usetikzlibrary{calc}
\usetikzlibrary{decorations.pathmorphing}

\tikzset{curve/.style={settings={#1},to path={(\tikztostart)
			.. controls ($(\tikztostart)!\pv{pos}!(\tikztotarget)!\pv{height}!270:(\tikztotarget)$)
			and ($(\tikztostart)!1-\pv{pos}!(\tikztotarget)!\pv{height}!270:(\tikztotarget)$)
			.. (\tikztotarget)\tikztonodes}},
	settings/.code={\tikzset{quiver/.cd,#1}
		\def\pv##1{\pgfkeysvalueof{/tikz/quiver/##1}}},
	quiver/.cd,pos/.initial=0.35,height/.initial=0}

\tikzset{tail reversed/.code={\pgfsetarrowsstart{tikzcd to}}}
\tikzset{2tail/.code={\pgfsetarrowsstart{Implies[reversed]}}}
\tikzset{2tail reversed/.code={\pgfsetarrowsstart{Implies}}}
\tikzset{no body/.style={/tikz/dash pattern=on 0 off 1mm}}

\tikzset{
	labl/.style={anchor=south, rotate=90, inner sep=.5mm}
}




\setlist{listparindent = \parindent, parsep=0pt,}
\setenumerate[1]{label = (\roman*), ref = (\roman*)}



\theoremstyle{plain}
\newtheorem{thm}{Theorem}
\newtheorem{lem}{Lemma}[section]
\newtheorem{coro}[lem]{Corollary}
\newtheorem{prop}[lem]{Proposition}

\theoremstyle{definition}
\newtheorem{df}[lem]{Definition}
\newtheorem{dfs}[lem]{Definitions}
\newtheorem{rem}[lem]{Remark}

\newtheorem{ex}[lem]{Example}
\newtheorem{exs}[lem]{Examples}
\newtheorem{fact}[lem]{Fact}
\newtheorem{nota}[lem]{Notation}

\newtheoremstyle{repeat}{}{}{\itshape}{}{\bfseries}{.}{.5em}{#3, bis}
\theoremstyle{repeat}
\newtheorem*{repeated-theorem}{Repeat}



\renewcommand{\phi}{\varphi}

\newcommand{\op}{^{\rm op}}
\newcommand{\id}{{\rm{id}}}

\newcommand{\0}{{\bf 0}}

\newcommand{\pr}{{\text{pr}}}
\newcommand{\lrset}[2]{\left\{\,{#1}\,\middle\vert\,{#2}\,\right\}}
\newcommand{\lrangle}[1]{\left \langle {#1} \right \rangle}

\newcommand{\Q}{\mathbb{Q}}
\newcommand{\Z}{\mathbb{Z}}


\newcommand{\cat}{\mathcal{C}}
\newcommand{\dcat}{\mathcal{D}}

\newcommand{\theory}{\mathbb{T}}

\newcommand{\topos}{\mathcal{E}}
\newcommand{\ftopos}{\mathcal{F}}


\newcommand{\CAT}{{\bf CAT}}

\newcommand{\sets}{{\bf Sets}}

\newcommand{\Top}{{\bf Top}}

\newcommand{\Topos}{{\bf Topos}}
\newcommand{\wep}{\textit{w.e.p.}}
\newcommand{\WEP}{\wep}
\newcommand{\iso}{\mathrm{iso}}



\newcommand{\Sh}{{\bf Sh}}
\newcommand{\opens}{\mathcal{O}}

\newcommand{\Sub}{{\rm{Sub}}}



\newcommand{\Pt}{{\rm Pt}}



\newcommand{\TopGrpd}{{\bf TopGrpd}}
\newcommand{\LogGrpd}{{\bf LogGrpd}}
\newcommand{\ECTGrpd}{{\bf ECTGrpd}}

\newcommand{\B}{{\bf B}}

\newcommand{\X}{\mathbb{X}}
\newcommand{\Y}{\mathbb{Y}}
\newcommand{\U}{\mathbb{U}}

\newcommand{\W}{\mathbb{W}}
\newcommand{\V}{\mathbb{V}}

\newcommand{\Xtt}{\mathbb{X}_{\tau_0}^{\tau_1}}

\newcommand{\Aut}{{\rm Aut}}

\newcommand{\form}[2]{\{\,\vec{{#2}} : {#1}\,\}}

\newcommand{\class}[1]{\llbracket\, {#1} \,\rrbracket}
\newcommand{\classv}[2]{\llbracket\, \vec{{#2}} : {{#1}} \,\rrbracket}
\newcommand{\lrclass}[1]{\left\llbracket\, {#1} \,\right\rrbracket}


\newcommand{\Tmodels}[1]{\mathbb{T}\text{-}{\bf Mod}({{#1}})}

\newcommand{\weqv}{\mathfrak{W}}



\newcommand{\Index}{\mathfrak{K}}

\newcommand{\tp}{{\rm tp}}


\newcommand{\paronto}{%
	\rightharpoondown\mathrel{\mspace{-15mu}}\rightharpoondown
}



\usepackage{geometry}
\geometry{margin=1.2in}


\begin{document}
	
	\maketitle
	
	\begin{abstract}
		We establish a bi-equivalence between the bi-category of topoi with enough points and a localisation of a bi-subcategory of topological groupoids.  
	\end{abstract}
	
	\renewcommand{\thefootnote}{\fnsymbol{footnote}} 
	\footnotetext{\emph{2020 Mathematics Subject Classification:} 18B25 (Primary), 03G30, 22A22 (Secondary).}
	\footnotetext{\emph{Key words and phrases:} Grothendieck topos, topos with enough points, topological groupoid.}     
	\renewcommand{\thefootnote}{\arabic{footnote}} 
	
	
	
	\section*{Introduction}
	
	\paragraph{Toposes as generalised spaces.}  The notion of a Grothendieck topos (hereafter, just topos), central to many developments in algebraic geometry, is often described as a `generalised topological space', where points are allowed to have isomorphisms.  More precisely, every topos \emph{with enough points} is \emph{represented}, non-uniquely, by a topological groupoid (see \cite{BM}).  If the topos lacks enough points, we must instead use localic (i.e.\ point-free) groupoids (see \cite{JT}).  In practice, most topoi of interest have enough points.  This raises the question: to what extent are topological groupoids a substitute to the study of topoi?
	
	In the localic case, this has been satisfactorily answered: Moerdijk demonstrated that the category of topoi is equivalent to a localisation of a subcategory of localic groupoids by a right calculus of fractions \cite[Theorem 7.7]{cont1}.  
	
	\paragraph{Our contribution.}

	There are many reasons to desire a topological (i.e.\ point-set) analogue of Moerdijk's result for topoi with enough points (see, for instance, \cref{rem:geo-is-hom} or \cref{rem:advert-for-AZ-extension}).  But the passage from localic to topological groupoids dismantles Moerdijk's techniques.  We cannot obtain an equivalence between the full category of topoi with enough points and a localisation of a subcategory of topological groupoids by a \emph{right} calculus of fractions (as shown in \cref{prop:no_right_calc_for_top}).
	
	Instead, this paper demonstrates that a (bi-)equivalence is possible with a localisation by a \emph{left} (bi-)calculus of fractions.  Consequently, the (bi-)equivalence we establish has a markedly different flavour to the localic version.
	\begin{thm}\label{thm:main-thm-intro}
		There is a bi-equivalence
		\[
		\Topos_\wep^\iso \simeq [\weqv^{-1}]\ECTGrpd
		\]
		where:
		\begin{enumerate}
			\item $\Topos_\wep^\iso$ denotes the bi-category of topoi with enough points, geometric morphisms and natural isomorphisms,
			\item and $\ECTGrpd$, the \emph{\'etale complete topological groupoids}, is a full bi-subcategory of the bi-category of topological groupoids, continuous functors, and continuous natural transformations, and $[\weqv^{-1}]\ECTGrpd$ denotes the localisation by a left bi-calculus of fractions $\weqv$ on $\ECTGrpd$.
		\end{enumerate}
	\end{thm}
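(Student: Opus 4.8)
The plan is to build an explicit pseudofunctor $\B\colon\ECTGrpd\to\Topos_\wep^\iso$, show that it inverts the class $\weqv$ and hence descends along the localisation, and then check that the descended pseudofunctor is a bi-equivalence by verifying essential surjectivity on objects, essential surjectivity on hom-categories, and full faithfulness on $2$-cells. First I would pin down $\B$: to a topological groupoid $\G=(G_1\rightrightarrows G_0)$ it assigns the topos $\B\G$ of $\G$-equivariant sheaves on $G_0$, to a continuous functor the evident geometric morphism between classifying topoi, and to a continuous natural transformation the evident natural isomorphism, all of this cohering into a genuine pseudofunctor. Since $G_0$ is a genuine topological space, $\B\G$ has enough points (its points include those arising from the points of $G_0$), so $\B$ does take values in $\Topos_\wep^\iso$; and since, by the definition of $\weqv$ (equivalently, by an earlier characterisation of the Morita equivalences), a continuous functor lies in $\weqv$ exactly when $\B$ sends it to an equivalence, $\B$ inverts $\weqv$.

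The first substantive task is to prove that $\weqv$ admits a left bi-calculus of fractions, i.e.\ the formal dual of the usual (Pronk) axioms: that $\weqv$ contains the equivalences and is closed under composition and under $2$-isomorphism (routine); the \emph{left Ore condition}, that given $w\colon\G\to\G'$ in $\weqv$ and an arbitrary continuous functor $f\colon\G\to\mathbb{H}$ one can complete the span to a square $\G'\xrightarrow{g}\K$, $\mathbb{H}\xrightarrow{w'}\K$ with $w'\in\weqv$ and $gw\cong w'f$; and the $2$-cell cancellation/lifting axiom. Once this is in place, the universal property of the bicategorical localisation produces a pseudofunctor $\overline{\B}\colon[\weqv^{-1}]\ECTGrpd\to\Topos_\wep^\iso$, unique up to coherent equivalence, agreeing with $\B$ on objects. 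For the Ore condition I would apply $\B$ to reduce to a statement about topoi, where it becomes essentially trivial because $\B w$ is already an equivalence, and then \emph{realise} the resulting geometric morphism by an honest continuous functor after enlarging $\mathbb{H}$ to a suitable Morita-equivalent étale complete groupoid $\K$; the formal diagram chase is easy, the realisation is the point.

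Essential surjectivity of $\overline{\B}$ on objects is the Butz--Moerdijk representation theorem \cite{BM} (every topos with enough points is $\B\G$ for some topological groupoid $\G$) together with the fact that every such $\G$ admits a weakly equivalent étale complete groupoid, namely its étale completion, so that we may take $\G\in\ECTGrpd$. The crucial remaining step is that each hom-functor of $\overline{\B}$ is an equivalence of categories. Fullness on $1$-cells asks that every geometric morphism $\Phi\colon\B\G\to\B\mathbb{H}$ be isomorphic to $\overline{\B}(w)^{-1}\circ\overline{\B}(f)$ for some left fraction $\G\xrightarrow{f}\mathbb{H}'\xleftarrow{w}\mathbb{H}$ with $w\in\weqv$; concretely one must manufacture a topological groupoid $\mathbb{H}'$, a $\weqv$-morphism $\mathbb{H}\to\mathbb{H}'$, and a continuous functor $\G\to\mathbb{H}'$ that together realise $\Phi$. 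The idea is to choose a sufficiently large family of points simultaneously adapted to $\B\G$, $\B\mathbb{H}$ and $\Phi$, feed it into the Butz--Moerdijk machine to produce $\mathbb{H}'$ over $\mathbb{H}$, and observe that with enough points available $\Phi$ lifts to an honest continuous functor into $\mathbb{H}'$. This is exactly where Moerdijk's localic argument — which refines the \emph{source} groupoid — breaks down and must be replaced by one that \emph{enlarges the target}; it is the main obstacle of the paper, and the reason (cf.\ \cref{prop:no_right_calc_for_top}) that a right calculus cannot succeed. Faithfulness and fullness on $2$-cells, i.e.\ that natural isomorphisms between induced geometric morphisms correspond bijectively to equivalence classes of $2$-cells of fractions, then follow from a relative version of the same construction together with the description of when two left fractions represent the same morphism in the localisation.

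Assembling these ingredients: $\weqv$ admits a left bi-calculus of fractions, $\overline{\B}$ is essentially surjective on objects, and each of its hom-functors is an equivalence; hence $\overline{\B}\colon[\weqv^{-1}]\ECTGrpd\to\Topos_\wep^\iso$ is a bi-equivalence, which is \cref{thm:main-thm-intro}. I expect the genuinely hard inputs to be the points-based realisation results — realising pushout-type topoi and, above all, realising geometric morphisms by continuous functors after a controlled enlargement of the target étale complete groupoid — while the verification of the bicalculus-of-fractions axioms and the final assembly are essentially bookkeeping.
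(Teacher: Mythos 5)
Your proposal is correct and follows essentially the same route as the paper: establish that $\weqv$ is a left bi-calculus of fractions (with the Ore condition reduced, via the sheaf functor, to realising a geometric morphism by a continuous functor after enlarging the target groupoid), use Butz--Moerdijk plus étale completion for essential surjectivity on objects, full faithfulness on 2-cells for étale complete groupoids, and the cospan-realisation of arbitrary geometric morphisms for essential surjectivity on hom-categories -- the paper packages the final assembly via Pronk's Proposition 24 rather than re-deriving it, but the content is identical. You correctly isolate the genuinely hard input (realising geometric morphisms by honest continuous functors after a controlled target enlargement, which the paper carries out via classifying topoi and theory expansions in \cref{geo_is_hom_of_grpd}) and the reason the target must be enlarged rather than the source refined.
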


	\paragraph{Overview.}  We proceed as follows.
	\begin{enumerate}[label = (\arabic*)]
		\item First, some preliminaries are recalled in \cref{sec:prelims}.  We recall the construction of the bi-category of topological groupoids $\TopGrpd$ and the bi-functor $\Sh \colon \TopGrpd \to \Topos_\WEP^\iso$ that sends a topological groupoid to its \emph{topos of sheaves}, as well as the interplay between this method of presenting topoi and the theory of classifying topoi, as found in \cite{myselfgrpds}, which will be exploited throughout the paper.  Also in \cref{sec:prelims}, we recall the construction of a bi-category of fractions from \cite{pronk}.
		
		\item Before embarking on the proof of \cref{thm:main-thm-intro}, we demonstrate that it is impossible to obtain a bi-equivalence for $\Topos_\WEP^\iso$ with a \emph{right} bi-calculus of fractions on a bi-subcategory of topological groupoids.  Thus, the topological case contrasts with the localic case considered in \cite[\S 7]{cont1}.
		
		\item In \cref{sec:loggrpd}, we introduce the study of `logical groupoids', a special class of topological groupoids.  Although we will eventually need to restrict further to the étale complete groupoids, the results of \cref{sec:weqv} hold in the weaker setting of logical groupoids.
		
		\item \cref{sec:weqv} contains the bulk of the proof of \cref{thm:main-thm-intro}.  We identify which inclusions of subgroupoids of logical groupoids induce equivalences of their sheaf topoi (\cref{classification-of-weqv}), and demonstrate that every geometric morphism is induced by a continuous of logical groupoids.  We also show that the essential image factorisation of a continuous functor yields the surjection-inclusion factorisation of the induced geometric morphism.
		 
		\item In \cref{sec:ectgrpd}, we further restrict our attention to \emph{étale complete groupoids}, which are the objects of $\ECTGrpd$.  In particular, we will describe the \emph{étale completion} of a (sober) logical groupoid.
		
		\item Finally, the various ingredients introduced in the preceding sections are combined in \cref{sec:bieqv} to deduce the bi-equivalence \cref{thm:main-thm-intro}.
	\end{enumerate}

	\section{Preliminaries}\label{sec:prelims}
	
	We first recall some preliminaries.  We will assume familiarity with the bi-category of $\Topos$, the bi-category whose objects are topoi, whose morphisms are geometric morphisms, and whose 2-cells are natural transformations between the inverse image functors of these geometric morphisms.  We use $\Topos^\iso$ to denote the bi-subcategory of $\Topos$ with only the invertible 2-cells.
	
	\begin{df}
		A topos $\topos$ is said to \emph{have enough points} if the class of geometric morphisms $\sets \to \topos$ is jointly surjective, i.e.\ the inverse image functors are jointly conservative.  Recall from \cite[Corollary 7.17]{topos} that if $\topos$ has enough points, there is a mere set of jointly surjective geometric morphisms $\lrset{f_i \colon \sets \to \topos}{i \in I}$.  Let $\Topos_\wep$ (respectively, $\Topos_\wep^\iso$) denote the full bi-subcategory of $\Topos$ (resp., $\Topos^\iso$) on topoi with enough points.
	\end{df}
	
	We now introduce the further aspects of topos theory needed to understand this paper.
	\begin{enumerate}[label = (\arabic*)]
		\item We begin with the notion of a \emph{topos of sheaves} on a topological groupoid, and how this construction can be made bi-functorial.
		\item Next, we review the results of \cite{myselfgrpds} concerning which open topological groupoids can represent the \emph{classifying topos} of a theory.
		\item Finally, we recall from \cite{pronk} the construction of the localisation of a bi-category by a bi-calculus of fractions.
	\end{enumerate}
	
	\subsection{Sheaves on a topological groupoid}
	
	The theory of topological groupoids is simply the theory of groupoids internal to $\Top$.  We spell out what this means:
	\begin{dfs}
		\begin{enumerate}
			\item A topological groupoid $\X = (X_1 \rightrightarrows X_0)$ is a (small) groupoid where the set of objects $X_0$ and the set of arrows $X_1$ have been endowed with topologies such that the maps:
			\begin{enumerate}
				\item $s \colon X_1 \to X_0$ and $t \colon X_1 \to X_0$ that send an arrow to, respectively, its source and target,
				\item $e \colon X_0 \to X_1$ that sends an object to the identity arrow on itself,
				\item $i \colon X_1 \to X_1$ that sends an arrow to its inverse,
				\item and the map $m \colon X_1 \times_{X_0} X_1 \to X_1$ that send a composable pair of arrows to their composite
			\end{enumerate}
			are all continuous.  If $s$ (or, equivalently, $t$) is open as well as continuous, we say that $\X$ is an \emph{open} topological groupoid.
			
			\item A \emph{continuous functor} of topological groupoids is a functor $\Phi \colon \X \to \Y$ such that the action on objects $\Phi_0 \colon X_0 \to Y_0$ and the action on arrows $\Phi_1 \colon X_1 \to Y_1$ are both continuous maps.
			
			\item A \emph{continuous transformation} $a \colon \Phi \Rightarrow \Psi$ between continuous functors is a natural transformation of the underlying functors such that the map $a \colon X_0 \to Y_1$ that sends an object to its component $a_x \colon \Phi_0(x) \to  \Psi_0(x)$ is continuous.
		\end{enumerate}
		Together this data defines a bi-category that we denote by $\TopGrpd$.
	\end{dfs}
	\begin{exs}\label{exs:example-grpds}
		\begin{enumerate}
			\item Each topological space $X$ yields a topological groupoid, with only identity arrows on the objects $X$, i.e.\ a categorically discrete groupoid.
			\item Each groupoid $\X$ (in the usual sense) can be viewed as a topologically discrete topological groupoid, where the topologies on both objects and arrows are the discrete topologies.
			\item Each topological group $G$ yields a topological groupoid with one object in the obvious way.
			\item A continuous action by a topological group $G$ on a space $X$ yields a topological groupoid.  The objects are $X$, and the arrows $G \times X$.  The source of a pair $(g,x) \in G \times X$ is $x$ while its target is the result of the action $g \cdot x$.
		\end{enumerate}
	\end{exs}
	\begin{dfs}
	Each topological groupoid comes naturally equipped with a notion of a \emph{topos of sheaves}.  Let $\X = (X_1 \rightrightarrows X_0)$ be a topological groupoid.
	\begin{enumerate}
		\item A \emph{sheaf} on $\X$ consists of a local homeomorphism $q \colon Y \to X_0$ and  an $X_1$-action $\beta \colon X_1 \times_{X_0} Y \to Y$, i.e.\ a continuous map such that, for all $y \in Y$ and compatible $g,h \in X_1$,
		\[
		\beta(h,\beta(g,y)) = \beta(h \circ g,y), \quad q(\beta(g,y)) = t(g), \quad \beta(e(q(y)),y) = y.
		\]
		\item A morphism of sheaves $f \colon (Y,q,\beta) \to (Y',q',\beta')$ consists of a continuous map $f \colon Y \to Y'$ such that $q' \circ f = q$ and $f$ is equivariant in the sense that $\beta'(g,f(y)) = f(\beta(g,y))$.
	\end{enumerate}
	Together, the sheaves on $\X$ and their morphisms define a topos, which we denote by $\Sh(\X)$.
	\end{dfs}
	\begin{nota}\label{notation-for-sheaves}
		\begin{enumerate}
			\item When we wish to keep track of the topologies $\tau_0$ and $\tau_1$ on $X_0$ and $X_1$ making $\X$ a topological groupoid, we will write $\Xtt$ for the resultant topological groupoid and $\Sh(\Xtt)$ for its topos of sheaves (normally we omit the topologies $\tau_0$ and $\tau_1$ and intuit these by context).
			\item Given a sheaf $(Y,q,\beta)$ over $\X$ and a subset $V \subseteq Y$, we will use $\overline{V}$ to denote the \emph{orbit} of $V$ under the action $\beta$, that is the subset
			\[
			\overline{V} = \lrset{y \in Y}{ \exists \, y' \in V \subseteq Y \text{ and } \alpha \in s^{-1}(q(y')) \subseteq X_1 \text{ such that } y = \beta(y',\alpha) } \subseteq Y.
			\]
		\end{enumerate}
	\end{nota}
	\begin{exs}
		We revisit the example topological groupoids from \cref{exs:example-grpds}, and describe their topoi of sheaves.
		\begin{enumerate}
			\item For a space $X$, the topos of sheaves on the categorically discrete groupoid on $X$ is the familiar topos of sheaves over the space.
			\item For a topologically discrete groupoid $\X$, its topos of sheaves is equivalent to the presheaf topos $\sets^\X$.
			\item The topos of sheaves on a topological group $G$ is the topos of discrete sets with a continuous $G$-action (as studied, for instance, in \cite[\S III.9]{SGL}), more commonly written as $\B G$.
			\item The topos of sheaves on a continuous group action on a space $G \times X \to X$ is the topos of $G$-equivariant sheaves over $X$, as studied in \cite{freyd} and \cite[Proposition A4.6]{SGL}.
		\end{enumerate}
	\end{exs}
	\begin{ex}\label{ex:definable_sheaves}
		A recurring example of a topological groupoid we will encounter is that induced by a groupoid of \emph{indexed} models for a geometric theory.  Let $\theory$ be a geometric theory over a signature $\Sigma$ and $M$ a model of $\theory$.  By an \emph{indexing} of $M$ by a set of parameters $\Index$, we mean an interpretation in $M$ of the expanded signature $\Sigma \cup \lrset{c_m}{m \in \Index'}$ where a constant symbol is added for each parameter in some subset $\Index' \subseteq \Index$, such that every element $n \in M$ is the interpretation of some $m \in \Index'$.  Equivalently, this is a choice of partial surjection $\Index \paronto M$.  We will abuse notation and not distinguish between a parameter and its interpretation in a model.

		Let $\X$ be a groupoid whose objects are models for $\theory$, each of which has a choice of $\Index$-indexing, and whose arrows are isomorphisms of these models.  Note that we do not require these isomorphisms to preserve the interpretation of the parameters.  Then $\X$ can be made into a topological groupoid as follows.
		\begin{enumerate}
			\item The set of objects $X_0$ is endowed with the topology whose basic opens are the \emph{sentences with parameters}:
			\[\classv{\phi}{m}_\X = \lrset{ M \in X_0}{M \vDash \phi(\vec{m})} \subseteq X_0,\]
			for $\phi$ a geometric formula and $\vec{m} \in \Index$ a tuple of parameters.
			\item The set of arrows $X_1$ is endowed with the topology generated by the basic opens
			\[
			\lrclass{
			\begin{matrix}
				\vec{m}_1 : \phi \\
				\vec{m}_2 \mapsto \vec{m}_3 \\
				\vec{m}_4 : \psi
			\end{matrix}
			}_\X
			=
			\lrset{M \xrightarrow{\alpha} N \in X_1}{
			\begin{matrix}
				M \vDash \phi(\vec{m}_1) \\
				\alpha(\vec{m}_2) = \vec{m}_3 \\
				N \vDash \psi(\vec{m}_4)
			\end{matrix},
			}
			\]
			where $\phi, \psi$ are formulae and $\vec{m}_1, \vec{m}_2, \vec{m}_3, \vec{m}_4 \in \Index$ are all tuples of parameters.  Given tuples of parameters $\vec{m},\vec{m}' \in \Index$, we will use the shorthand $\class{\vec{m} \mapsto \vec{m}'}_\X$ for the open subset $\lrset{\alpha \in X_1}{\alpha(\vec{m}) = \vec{m}'} \subseteq X_1$.
		\end{enumerate}
		Collectively, these topologies are known as the \emph{logical topologies}, given a choice of indexing $\Index \paronto \X$.
		
		The \emph{definable sheaves} over $\X$ form an important set of sheaves in $\Sh(\X)$.  For a geometric formula $\phi$ in context $\vec{x}$, consider the set
		\[
		\classv{\phi}{x}_\X = \lrset{\lrangle{\vec{n}, M}}{\vec{n} \in M \in X_0 \text{ and } M \vDash \phi(\vec{n})}.
		\]
		This has an evident projection $\pi_{\classv{\phi}{x}} \colon \classv{\phi}{x}_\X \to X_0$, given by $\lrangle{\vec{n},M} \mapsto M$, and an $X_1$-action where $\alpha \cdot \lrangle{\vec{n},M} = \lrangle{\alpha(\vec{n}),N}$ for an arrow $M \xrightarrow{\alpha} N \in X_1$.  When $\classv{\phi}{x}_\X$ is endowed with the topology whose basic opens are the subsets defined by formulae with parameters, i.e.\ subsets of the form
		\[\class{\vec{x},\vec{m}:\phi\land \psi}_\X = \lrset{\lrangle{\vec{n},M}}{M \vDash \phi(\vec{n}), M \vDash \psi(\vec{n},\vec{m})},\]
		for $\vec{m} \in \Index$ a tuple of parameters and $\psi$ another formula, then $\classv{\phi}{x}_\X$ becomes a sheaf for $\X$.
	\end{ex}

	There is a natural functor $\Sh(\X) \to \Sh(X_0)$ that forgets the $X_1$-action of a sheaf on $\X$.  This functor is the inverse image of a geometric morphism, which we label $u_\X$, that has many desirable properties:
	\begin{prop}[Theorem 3.6 \& Proposition 4.4 \cite{cont1}]\label{u_X-is-surj}
		For every topological groupoid $\X$, the geometric morphism $u_\X \colon \Sh(X_0) \to \Sh(\X)$ is surjective.  If $\X$ is an open topological groupoid, then $u_\X$ is open as well.
	\end{prop}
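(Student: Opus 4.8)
The plan is to handle the two assertions separately. For surjectivity I would argue directly from the construction: a geometric morphism is surjective precisely when its inverse image functor is faithful, and $u_\X^*$ is simply the forgetful functor sending an equivariant sheaf $(Y,q,\beta)$ to its underlying local homeomorphism $(Y,q)$ over $X_0$, and an equivariant continuous map to itself. Since a morphism of $\Sh(\X)$ is by definition a continuous map satisfying two equations, $u_\X^*$ is injective on every hom-set, hence faithful. (Equivalently it is conservative: if an equivariant continuous map is a homeomorphism over $X_0$ then its set-theoretic inverse automatically satisfies the equivariance equation, so the map is already invertible in $\Sh(\X)$.) This needs no hypothesis on $\X$.

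For openness I would assume $s\colon X_1\to X_0$ open and verify the standard criterion: $u_\X$ is open iff, for each object $E$ of $\Sh(\X)$, the map $u_\X^*\colon\Sub_{\Sh(\X)}(E)\to\Sub_{\Sh(X_0)}(u_\X^*E)$ admits a left adjoint $\exists_E$, with the family $(\exists_E)_E$ satisfying the Beck--Chevalley condition and Frobenius reciprocity. Fix $E=(Y,q,\beta)$. Subobjects of $(Y,q)$ in $\Sh(X_0)$ are exactly the open subspaces $V\subseteq Y$, while subobjects of $E$ in $\Sh(\X)$ are exactly the $X_1$-invariant open subspaces, and $u_\X^*$ is the evident inclusion of posets. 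I would put $\exists_E(V):=\overline{V}$, the orbit of $V$ under the action (notation as introduced above). Granting that $\overline{V}$ is open --- see the next paragraph --- it is $X_1$-invariant by construction, hence a subobject of $E$, and for invariant $W$ one has $\overline{V}\subseteq W$ iff $V\subseteq W$ (using $V\subseteq\overline{V}$ one way and $\overline{V}\subseteq\overline{W}=W$ the other), so $\overline{(-)}\dashv u_\X^*$. Frobenius amounts to $\overline{V\cap W}=\overline{V}\cap W$ for invariant $W$: here $\subseteq$ is immediate, and for $\supseteq$, if $y=\alpha\cdot v\in\overline{V}\cap W$ with $v\in V$, then $v=\alpha^{-1}\cdot y\in W$ since $W$ is invariant, so $y\in\overline{V\cap W}$. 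Beck--Chevalley and naturality in $E$ are read off from the explicit orbit formula, using that finite limits and subobjects in $\Sh(\X)$ are computed on underlying spaces.

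The only place the hypothesis on $\X$ enters is in showing orbits of opens are open. Write $P=X_1\times_{X_0}Y=\{(\alpha,y):s(\alpha)=q(y)\}$ (fibre product over $s$ and $q$), with first projection $p_1\colon P\to Y$ and action map $\beta\colon P\to Y$. The defining square exhibits $p_1$ as the pullback of $s$ along $q$, so $p_1$ is open because $s$ is. Moreover $\theta\colon P\to P$, $(\alpha,y)\mapsto(\alpha^{-1},\alpha\cdot y)$, is a well-defined continuous involution --- well-definedness uses $q(\alpha\cdot y)=t(\alpha)=s(\alpha^{-1})$, and $\theta\circ\theta=\id$ uses the action axioms --- hence a homeomorphism, and $p_1\circ\theta=\beta$; therefore $\beta$ is open too. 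Since $\overline{V}=\beta(p_1^{-1}(V))$ and $p_1^{-1}(V)$ is open, $\overline{V}$ is open.

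I expect the main obstacle to be not any individual step but the bookkeeping in the second paragraph: pinning down the identification of the subobject lattices with (invariant) open subspaces, and checking naturality and Beck--Chevalley of $(\exists_E)_E$ against all morphisms and all pullback squares of $\Sh(\X)$. These are routine once one exploits that finite limits and subobjects in $\Sh(\X)$ are computed on underlying spaces, but they must be spelled out. (A more conceptual alternative for openness would be a descent argument: the pullback of $u_\X$ along itself is $\Sh(X_1)$ carrying the two geometric morphisms induced by $s$ and $t$, which are open when $\X$ is, and one appeals to descent of openness along the surjection $u_\X$; the direct argument above is preferred here as it stays within the point-set picture used throughout the paper.)
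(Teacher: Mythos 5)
Your argument is correct, but note that the paper itself offers no proof of this statement: it is imported verbatim from Moerdijk (Theorem 3.6 and Proposition 4.4 of \emph{The classifying topos of a continuous groupoid, I}), so there is nothing internal to compare against. On its own merits your proof is sound. Surjectivity via faithfulness of the forgetful inverse image is exactly right and, as you say, needs no hypothesis on $\X$. For openness, the two substantive points both check out: (a) the action map $\beta$ is open because it factors as the open pullback projection $X_1\times_{X_0}Y\to Y$ (open since open maps are pullback-stable in $\Top$) precomposed with the homeomorphism $(\alpha,y)\mapsto(\alpha^{-1},\alpha\cdot y)$, so $\overline{V}=\beta(\mathrm{pr}_Y^{-1}(V))$ is open; and (b) the saturation $V\mapsto\overline{V}$ is left adjoint to the inclusion of invariant opens into all opens, with Frobenius as you compute and Beck--Chevalley for an arbitrary $g\colon E'\to E$ in $\Sh(\X)$ reducing, via equivariance of $g$, to the identity $\overline{g^{-1}(V)}=g^{-1}(\overline{V})$ --- this is the one deferred verification worth actually writing out, since it is where naturality genuinely uses equivariance rather than just point-set topology. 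Two cosmetic slips: your $p_1$ is the projection to the \emph{second} factor $Y$, and Frobenius reciprocity need not be checked separately once Beck--Chevalley holds for all morphisms (it follows by applying the condition to product projections), though including it does no harm.
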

	
	\paragraph{Sheaves as a bi-functor.}
	The notion of the topos of sheaves on a topological groupoid is natural in the sense that there is a bi-functor
	\[
	\begin{tikzcd}
		\Sh \colon \TopGrpd \to \Topos.
	\end{tikzcd}
	\]
	Given a continuous functor between topological groupoids $\Phi \colon \X \to \Y$, the inverse image functor $\Sh(\Phi)^\ast \colon \Sh(\Y) \to \Sh(\X)$ sends a sheaf $W$ over $\Y$ to the pullback
	\[
	\begin{tikzcd}
			\Phi_0^\ast W \ar{r} \ar{d} & W \ar{d} \\
			X_0 \ar{r}{\Phi_0} & Y_0,
	\end{tikzcd}
	\]
	while the $X_1$-action on $\Phi_0^\ast W$ is given by $g \cdot (s(g),w) = (t(g),\Phi_1(g) \cdot w)$.  Given a continuous transformation $a \colon \Phi \Rightarrow \Psi$, the induced natural transformation $\Sh(a) \colon \Sh(\Phi)^\ast \Rightarrow \Sh(\Psi)^\ast$ has as its component at a sheaf $W \in \Sh(\Y)$ the morphism of sheaves $\Sh(a)_W \colon \Phi_0^\ast W \to \Psi_0^\ast W$ that acts by $(x,w) \mapsto (x,a_x \cdot w)$.
	
	\begin{ex}
		Let $\X = (X_1 \rightrightarrows X_0)$ be a topological groupoid.  The space of objects $X_0$ yields a categorically discrete topological groupoid, and the obvious (identity on objects) functor $u \colon X_0 \to \X$ is continuous.  When we apply the bi-functor $\Sh$ to $u$, we return the geometric morphism $u_\X \colon \Sh(X_0) \to \Sh(\X)$ from \cref{u_X-is-surj}.
	\end{ex}
	
	\subsection{Representing groupoids of classifying topoi}
	
	Throughout this paper, we will exploit the interplay between classifying topoi and topological grouopoids in order to prove our results.  Recall from \cite[Part D]{elephant} that a classifying topos for a (geometric) theory $\theory$ is a topos $\topos_\theory$ such that the representable pseudo-functor $\Topos(-,\topos_\theory) \colon \Topos\op \to \CAT$ is naturally isomorphic to the pseudo-functor $\Tmodels{-} \colon \Topos\op \to \CAT $ that sends a topos $\ftopos$ to the category of models of $\theory$ internal to $\ftopos$.  This universal property defines a classifying topos up to equivalence.  This is an exhaustive perspective on topos theory because every {geometric} theory has a classifying topos (\cite[Theorem 2.1.10]{TST}) and every topos classifies some geometric theory (\cite[Theorem 2.1.11]{TST}).
	
	The classifying topos contains a special internal model of $\theory$, named the \emph{universal model} $U_\theory$, that corresponds to the geometric morphism $\id_{\topos_\theory} \in \Topos(\topos_\theory,\topos_\theory) \simeq \Tmodels{\topos_\theory}$ (see \cite[\S 2.1.2]{TST} or \cite[\S D3.1]{elephant}).  The model $U_\theory$ is witnessed by the inclusion of the \emph{syntactic category} as a full subcategory $\cat_\theory \subseteq \topos_\theory$.  We denote the interpretation of a geometric formula $\phi$ in $U_\theory$, i.e.\ an object of the subcategory $\cat_\theory \subseteq \topos_\theory$, by $\form{\phi}{x}_\theory$.  The objects of $\cat_\theory$ \emph{generate} $\topos_\theory$ and so any geometric morphism $f \colon \ftopos \to \topos_\theory$ is determined by the image of $\form{\phi}{x}_\theory$ under the inverse image functor $f^\ast$.  Indeed, $f^\ast \form{\phi}{x}_\theory$ describes the interpretation of $\phi$ in the $\theory$-model in $\ftopos$ corresponding to $f \colon \ftopos \to \topos_\theory$; it is in this sense that the model $U_\theory$ is universal.
	
	\begin{rem}
		The classifying topos of a theory $\theory$ has enough points if and only if $\theory$ is \emph{complete} with respect to its set-based models, i.e.\ if two formulae $\phi$ and $\psi$ are interpreted identically in every set-based model of $\theory$, then $\phi$ and $\psi$ are provably equivalent in the theory $\theory$.
	\end{rem}
	
	\begin{df}
		A topological groupoid $\X$ is said to represent a topos $\topos$ (or a theory $\theory$) if there is an equivalence of topoi $\Sh(\X) \simeq \topos$ (or $\Sh(\X) \simeq \topos_\theory$). 
	\end{df}
	
	Thus, the theorem of Butz and Moerdijk \cite{BM} asserts that every topos with enough points has at least one representing groupoid.  A classification of the representing open topological groupoids for a given theory is provided in \cite{myselfgrpds}.
	\begin{prop}[Corollary 3.9 \cite{myselfgrpds}]\label{class-repr-grpds-for-theory}
		Let $\theory$ be a geometric theory and let $\X = (X_1 \rightrightarrows X_0)$ be a (small) groupoid.  The following are equivalent:
		\begin{enumerate}
			\item there exist topologies, satisfying the $T_0$ separation axiom, on $X_1$ and $X_0$ making $\X$ an \emph{open} topological groupoid for which there is an equivalence $\Sh(\X) \simeq \topos_\theory$;
			
			\item the groupoid $\X$ is a groupoid of set-based models of $\theory$, i.e.\ there is an inclusion functor $\X \subseteq \Tmodels{\sets}$, such that each model $M \in X_0$ can be given an \emph{indexing} by a set of parameters $\Index$, as in \cref{ex:definable_sheaves}, such that:
			\begin{enumerate}
				\item the set $X_0$ of models is \emph{conservative}, i.e.\ if two formulae have identical interpretations in every model of $X_0$, then they are $\theory$-provably equivalent;
				\item the groupoid $\X$ \emph{eliminates parameters}, meaning that for every tuple of parameters $\vec{m} \in \Index$, the orbit of the interpretations of the tuple $\vec{m}$ in the groupoid is \emph{definable} by a formula $\phi$ without parameters, that is to say 
				\begin{align*}
					\overline{\class{\vec{x} = \vec{m}}}_\X & = \lrset{\lrangle{\vec{n},N}}{\exists \, M \xrightarrow{\alpha} N \in X_1 \text{ with } M \vDash \exists \, \vec{x} \ \vec{m} = \vec{x} \text{ and } \alpha(\vec{m}) = \vec{n} } \\
					& = \lrset{\lrangle{\vec{n},N}}{N \in X_0, \, N \vDash \phi(\vec{n})}_\X \\
					&= \classv{\phi}{x}_\X.
				\end{align*}
			\end{enumerate}
		\end{enumerate}
	\end{prop}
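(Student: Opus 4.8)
The statement packages two assertions into one biconditional, so the plan is to prove the two implications separately, taking the surjection $u_\X \colon \Sh(X_0) \to \Sh(\X)$ of \cref{u_X-is-surj} and the definable sheaves $\classv{\phi}{x}_\X$ of \cref{ex:definable_sheaves} as the common machinery. The organising remark is that, for any groupoid $\X$ of set-based $\theory$-models carrying the logical topology, the definable sheaves assemble into a model of $\theory$ internal to $\Sh(\X)$, hence into a canonical geometric morphism $c_\X \colon \Sh(\X) \to \topos_\theory$ --- the one classifying that model --- whose inverse image sends $\form{\phi}{x}_\theory$ to $\classv{\phi}{x}_\X$; the proposition then records exactly when $c_\X$ is an equivalence.

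For (ii) $\Rightarrow$ (i) I would equip $\X$ with the logical topologies of \cref{ex:definable_sheaves}. That these are topologies and that $s, t, e, i, m$ are continuous is a direct computation from the shape of the basic opens (finite conjunctions of geometric formulae give closure under finite intersection). Openness of $s$ amounts to rewriting the $s$-image of a basic open of $X_1$ as a union of basic opens of $X_0$, and this is the first place the hypothesis that $\X$ \emph{eliminates parameters} is used: it replaces the relevant ``there exists an arrow'' condition by a formula without parameters. The $T_0$ axiom is then read off the basic opens. It remains to prove $c_\X$ is an equivalence. It is a surjection because $X_0$ is conservative: since $\cat_\theory$ generates $\topos_\theory$, $c_\X$ is surjective once every monomorphism $\form{\psi}{x}_\theory \hookrightarrow \form{\phi}{x}_\theory$ that $c_\X^\ast$ sends to an isomorphism is already one, i.e.\ once $\classv{\psi}{x}_\X = \classv{\phi}{x}_\X$ forces $\phi$ and $\psi$ to be $\theory$-provably equivalent --- which is conservativity verbatim. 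That $c_\X$ is moreover an inclusion, hence an equivalence, I would deduce by the Comparison Lemma applied to the functor $\form{\phi}{x}_\theory \mapsto \classv{\phi}{x}_\X$ and the syntactic site $(\cat_\theory, J_\theory)$: faithfulness and cover-reflection of this functor reduce again to conservativity of $X_0$; syntactically covering families go to jointly epimorphic ones because satisfaction in the models of $X_0$ respects the corresponding geometric sequents; and the two substantial points --- that the definable sheaves generate $\Sh(\X)$ and that the functor is full modulo $(\cat_\theory, J_\theory)$-covers --- are again where elimination of parameters does the work, letting one realise an arbitrary sheaf, and an arbitrary morphism of definable sheaves, as \emph{locally} definable (after restriction along definable subsheaves cut out by formulae with parameters) and then absorb those parameters on a cover.

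For (i) $\Rightarrow$ (ii) I would begin from an open $T_0$ groupoid $\X$ and a chosen equivalence $\Sh(\X) \simeq \topos_\theory$. For each $x \in X_0$, composing the point $x \colon \sets \to \Sh(X_0)$ with $u_\X$ and the equivalence presents $x$ as a set-based $\theory$-model $M_x$, and each arrow of $X_1$ becomes an isomorphism of the associated models, exhibiting $\X$ as a groupoid of set-based $\theory$-models. To obtain an indexing, pull the sorts of the universal model back along the equivalence and $u_\X^\ast$ to get local homeomorphisms over $X_0$; since $X_0$ carries a set of opens, a set $\Index$ of continuous local sections of these maps already has images covering them, and sending $x$ to the value at $x$ of the section indexed by $m$ (where defined) gives a partial surjection $\Index \paronto M_x$ for every $x$. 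Conservativity of $X_0$ is then forced: $u_\X^\ast$ is conservative (being the inverse image of a surjection) and $\cat_\theory$ generates $\topos_\theory$, so two formulae with the same interpretation in every $M_x$ already have the same image in $\topos_\theory$ and are $\theory$-provably equivalent. Elimination of parameters comes from the classification of subobjects: for $\vec{m} \in \Index$ the set $\class{\vec{x} = \vec{m}}_\X$ is the image of a continuous local section, hence open, so --- $\X$ being open --- its orbit $\overline{\class{\vec{x} = \vec{m}}}_\X$ is an open $X_1$-equivariant subsheaf of $\classv{\vec{x} = \vec{x}}{x}_\X$, hence descends to a subobject of $\form{\vec{x} = \vec{x}}{x}_\theory$ in $\topos_\theory$; but every such subobject is $\classv{\phi}{x}_\X$ for a geometric formula $\phi$ without parameters, which is precisely the required definability.

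The step I expect to be the main obstacle is the inclusion half of (ii) $\Rightarrow$ (i): showing that the definable sheaves generate $\Sh(\X)$ and that the comparison functor into $\Sh(\X)$ is full modulo $(\cat_\theory, J_\theory)$-covers. This is the one place where the hypothesis that $\X$ eliminates parameters is genuinely indispensable, and it forces a careful analysis of morphisms between sheaves on a logical groupoid --- showing each is locally given by a formula, whose parameters are then absorbed on a cover --- that cannot be sidestepped. A smaller subtlety worth flagging is that the $T_0$ verification in (ii) $\Rightarrow$ (i) implicitly uses that no two distinct objects of $\X$ are related by a parameter-preserving isomorphism; this mild rigidity is best regarded as part of what ``choosing an indexing'' provides.
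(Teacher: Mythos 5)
The first thing to note is that the paper offers no proof of this proposition: it is imported wholesale as Corollary 3.9 of \cite{myselfgrpds}, so there is no in-paper argument to measure yours against. On its own terms your sketch reconstructs what is essentially the argument of that reference (and of Awodey--Forssell): the comparison morphism $\Sh(\X) \to \topos_\theory$ classifying the internal model assembled from the definable sheaves, surjectivity from conservativity, the Comparison Lemma for the rest, and, conversely, reading off models from points of $X_0$ via $u_\X$ and obtaining elimination of parameters from the identification of equivariant open subsets of $u_\X^\ast\form{\top}{x}_\theory$ with subobjects of $\form{\top}{x}_\theory$ in $\topos_\theory$. I have two remarks, neither fatal. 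First, the step you single out as the main obstacle --- generation of $\Sh(\X)$ by the definable sheaves together with fullness modulo covers --- can be substantially shortened using material the paper itself records later: elimination of parameters identifies the definable sheaves $\classv{\chi_{\vec{m}} \land \phi}{x}_\X$ with the Moerdijk generators $\lrangle{\X^{\phi(\vec{m})}}$ (see the example following \cref{ex:moerdijk_generators}), and Moerdijk's theorem already gives that the objects $\lrangle{\U}$ generate $\Sh(\X)$ for \emph{any} open topological groupoid; this disposes of generation outright and leaves only a fibrewise verification of fullness on orbits of basic opens. Second, the $T_0$ point you flag deserves more than a hedge: two distinct objects of $X_0$ that are isomorphic \emph{as indexed structures} are topologically indistinguishable in the logical topology, so condition (ii) as literally stated does not by itself yield $T_0$ spaces, and ``part of what choosing an indexing provides'' must be turned into an actual argument (or an explicit, harmless identification of such points) before the implication from (ii) to (i) is complete.
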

	
	If $\X$ is a groupoid of set-based models of a geometric theory $\theory$ satisfying the conditions of \cref{class-repr-grpds-for-theory}, the topologies on $X_0$ and $X_1$ for which $\X$ is an open topological groupoid with $\Sh(\X) \simeq \topos_\theory$ are, unsurprisingly, the {logical topologies} from \cref{ex:definable_sheaves}.  Under the equivalence $\Sh(\X) \simeq \topos_\theory$, the definable sheaf $\classv{\phi}{x}_\X \in \Sh(\X)$ from \cref{ex:definable_sheaves} is identified with the object $\form{\phi}{x}_\theory \in \topos_\theory$.
	
	\begin{exs}\label{ex:elim_para}
		We give some examples of groupoids of models that eliminate parameters.
		\begin{enumerate}
			\item For a fixed infinite set of parameters $\Index$, the groupoid of \emph{all} $\Index$-indexed models of a geometric theory $\theory$ (and all their isomorphisms) eliminates parameters (\cite[Corollary 8.25]{myselfgrpds}), namely we have that
			\[
			\overline{\class{\vec{x} = \vec{m}}}_\X = \lrclass{\vec{x} : \bigwedge_{m_i = m_j} x_i = x_j}_\X.
			\]
			Thus, if the set of $\Index$-indexed models is conservative, the theory $\theory$ is represented the topological groupoid of all $\Index$-indexed groupoids endowed with the logical topologies (cf.\ \cite{forssellphd,forssell,awodeyforssell}).
			
			\item Let $M \vDash \theory$ be a model where the (model-theoretic) type of each tuple $\vec{m} \in M$ is \emph{isolated} by a formula $\chi_{\vec{m}}$, i.e.\
			\[
			\tp_M(\vec{m}) = \lrset{\phi}{M \vDash \phi(\vec{m})} = \lrset{\phi}{\theory \text{ proves the sequent } \chi_{\vec{m}} \vdash_{\vec{x}} \phi} .
			\]
			Then the automorphism group $\Aut(M)$ eliminates parameters (for the indexing of $M$ by its own elements) if and only if $M$ is \emph{ultrahomogeneous}, i.e.\ if two tuples $\vec{n},\vec{m} \in M$ have the same type then there is an automorphism of $M$ sending $\vec{n}$ pointwise onto $\vec{m}$ (\cite[Lemmas 8.4 \& 8.5]{myselfgrpds}).  In this case, we have that
			\[
			\overline{\class{\vec{x} = \vec{m}}}_{\Aut(M)} = \classv{\chi_{\vec{m}}}{x}_{\Aut(M)}.
			\]
			Thus, if $M$ is a conservative model for $\theory$, then when $\Aut(M)$ is endowed with topology generated by the subsets 
			\[
			\class{\vec{m} \mapsto \vec{m}'}_{\Aut(M)} = \lrset{\alpha \in \Aut(M)}{\alpha(\vec{m}) = \vec{m}'},
			\]			
			for finite tuples $\vec{m}, \vec{m}' \in M$, there is an equivalence of topoi $\topos_\theory \simeq \B \Aut(M)$ (cf.\ \cite[Theorem 3.1]{caramellogalois}).
		\end{enumerate}
	\end{exs}
	
	\begin{lem}(cf.\ Scholium C5.2.6 \cite{elephant})\label{every-set-of-models-has-expansion}
		Let $\theory$ be a theory with enough points.  Given any set $W$ of models of $\theory$, there exists a representing groupoid $\X = (X_1 \rightrightarrows X_0)$ of $\theory$ where $W \subseteq X_0$.
	\end{lem}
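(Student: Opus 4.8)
The plan is to realise the required representing groupoid as the groupoid of \emph{all} $\Index$-indexed models of $\theory$, for a sufficiently large index set $\Index$, and then to read off that it represents $\theory$ from \cref{class-repr-grpds-for-theory}, using \cref{ex:elim_para} to supply the elimination of parameters and the hypothesis on $\theory$ to supply conservativity.

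First I would fix the index set. Since $\theory$ has enough points, its set-based models are jointly conservative, so --- as recalled after the definition of having enough points, using \cite[Corollary 7.17]{topos} --- there is a \emph{set} $V$ of set-based models of $\theory$ that is already conservative: any two geometric formulae interpreted identically throughout $V$ are $\theory$-provably equivalent. Now $W \cup V$ is a set of structures, so $\kappa := \sup_{M \in W \cup V} |M|$ is a cardinal; let $\Index$ be any infinite set with $|\Index| \geq \kappa$.

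Next, let $\X = (X_1 \rightrightarrows X_0)$ be the (essentially small) groupoid whose objects are all $\Index$-indexed models of $\theory$ and whose arrows are all isomorphisms between them, not required to respect the indexings, as in \cref{ex:definable_sheaves}. Because $|\Index| \geq |M|$ for each $M \in W$, every such $M$ admits a partial surjection $\Index \paronto M$, and with any choice of such an indexing $M$ becomes an object of $\X$; thus $W \subseteq X_0$. The same applies to each model in $V$, so $V \subseteq X_0$; since the interpretation of a parameter-free geometric formula in a model does not depend on its indexing, conservativity of $V$ then forces $X_0$ to be conservative in the sense of \cref{class-repr-grpds-for-theory}. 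On the other hand, the groupoid of all $\Index$-indexed models eliminates parameters: this is exactly the first case of \cref{ex:elim_para}, i.e.\ \cite[Corollary 8.25]{myselfgrpds}, where moreover $\overline{\class{\vec{x} = \vec{m}}}_\X$ is computed to be $\lrclass{\vec{x} : \bigwedge_{m_i = m_j} x_i = x_j}_\X$. Hence $\X$ satisfies condition (ii) of \cref{class-repr-grpds-for-theory}, and the implication (ii) $\Rightarrow$ (i) of that proposition yields $T_0$ (indeed logical) topologies on $X_0$ and $X_1$ making $\X$ an open topological groupoid with $\Sh(\X) \simeq \topos_\theory$. This $\X$ represents $\theory$ and has $W \subseteq X_0$, which is what was wanted.

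I expect the only real point requiring care to be the set-theoretic bookkeeping around ``the groupoid of all $\Index$-indexed models'': one must ensure it is (essentially) small --- which is why $\Index$ is chosen to bound the cardinalities involved --- and that it contains the prescribed models of $W$ on the nose rather than merely up to isomorphism, which the choice of $\Index$ again arranges. The substantive ingredient, that passing to \emph{all} $\Index$-indexed models (for $\Index$ as large as we now need it) still eliminates parameters, is not reproved here but quoted from \cite[Corollary 8.25]{myselfgrpds} via \cref{ex:elim_para}.
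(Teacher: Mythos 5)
Your proposal is correct and follows essentially the same route as the paper: choose an infinite index set $\Index$ large enough that every model in $W$ (and in some conservative set of models, which exists since $\theory$ has enough points) admits an $\Index$-indexing, and take $\X$ to be the groupoid of all $\Index$-indexed models, which eliminates parameters by \cref{ex:elim_para} and is conservative, so \cref{class-repr-grpds-for-theory} applies. The paper's proof is just a terser version of the same argument; your extra care about conservativity via the auxiliary set $V$ and the set-theoretic bookkeeping is exactly what the paper leaves implicit.
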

	\begin{proof}
		Let $\Index$ be a cardinal large enough that every model $M \in W$ admits a $\Index$-indexing and that the set of all $\Index$-indexed models of $\theory$ are conservative.  Then the groupoid $\X$ of all $\Index$-indexed models of $\theory$ is a representing groupoid for $\theory$ for which $W \subseteq X_0$.
	\end{proof}
	
	\subsection{Categories of fractions}

	For the reader unfamiliar with categories of fractions, we briefly recall their construction here.  The localisation of a category was introduced by Gabriel and Zisman \cite{gabrielzisman} and later extended to the bi-categorical setting by Pronk \cite{pronk}.  Let $\cat$ be a category and $\Sigma$ a class of morphisms in $\cat$.  The \emph{localisation} of $\cat$ by $\Sigma$, if it exists, is a functor $\cat \to \cat_\Sigma$ such that, for any functor $F \colon \cat \to \dcat$ where $F$ sends each $f \in \Sigma$ to an isomorphism in $\dcat$, there is a unique extension
	\[
	\begin{tikzcd}
		\cat \ar{r} \ar{rd}[']{F} & \cat_\Sigma \ar[dashed]{d} \\
		& \dcat.
	\end{tikzcd}
	\]
	In general, there is no guarantee that $\cat_\Sigma$ exists, or even if it does, that it admits a compact description.  The exception is when $\Sigma$ forms either a \emph{left} (or a \emph{right}) \emph{calculus of fractions}.  That is to say, $\Sigma$ is wide, satisfies the left (respectively, right) Ore condition and the left (resp., right) cancellability condition.
	
	In this case, the localisation exists, and can easily be described.  If $\Sigma$ is a left calculus of fractions on $\cat$, we denote its localisation by $[\Sigma^{-1}]\cat$, and if $\Sigma'$ is a right calculus of fractions, we denote its localisation by $\cat[\Sigma'^{-1}]$.
	\begin{enumerate}
		\item The categories $\cat$, $[\Sigma^{-1}]\cat$ and $\cat[\Sigma'^{-1}]$ all share the same objects.
		\item An arrow $(f,g) \colon a \to b$ in $[\Sigma^{-1}]\cat$ is described by a cospan
		\[
		\begin{tikzcd}
			& b \ar{d}{g} \\
			a \ar{r}{f} & x
		\end{tikzcd}
		\]
		where $g \in \Sigma$.  The fact that $\Sigma$ is a left calculus of fractions entails that, given composable arrows $(f,g) \colon a \to b, (f',g') \colon b \to c$ in $\cat[\Sigma^{-1}]$, there is an essentially unique way to complete the diagram
		\[
		\begin{tikzcd}
			&& c \ar{d}{g'} \\
			& b \ar{d}[']{g} \ar{r}{f'} & y \ar[dashed]{d}{k} \\
			a \ar{r}{f} & x \ar[dashed]{r}{h} & z
		\end{tikzcd}
		\]
		for which $k \in \Sigma$, thus yielding the composite $(f',g') \circ (f,g)$ in $[\Sigma^{-1}]\cat$.
		
		In $\cat[\Sigma'^{-1}]$, an arrow $(f,g) \colon a \to b$ is instead given by a span
		\[
		\begin{tikzcd}
			x \ar{r}{g} \ar{d}[']{f} & b \\
			a &
		\end{tikzcd}
		\]
		for which $f \in \Sigma'$.
	\end{enumerate}
	
	When $\cat$ is a bi-category and $\Sigma$ is a class of 1-morphisms, the same story unfolds for a bi-categorical localisation, a bi-functor $\cat \to \cat_\Sigma$ universal amongst all bi-functors that send morphisms $f \in \Sigma$ to equivalences.  If $\Sigma$ forms a left bi-calculus of fractions, then the localization is given by a \emph{bi-category of fractions} $[\Sigma^{-1}]\cat$, whose objects and 1-morphisms are identical to the 1-categorical case.  A 2-cell $\alpha \colon (f,g) \Rightarrow (f',g')$ in $[\Sigma^{-1}]\cat$ is simply a 2-cell $\alpha \colon f \Rightarrow f'$ in $\cat$.  For the compositions of 2-cells in $[\Sigma^{-1}]\cat$, the reader is directed to \cite{pronk}.

	\section{A right calculus of fractions is impossible}\label{sec:rightcalc}
	
	Before embarking on our proof of \cref{thm:main-thm-intro}, we justify why we consider a \emph{left} (bi-)calculus of fractions on topological groupoids by demonstrating that a (bi-)equivalence with a \emph{right} (bi-)calculus of fractions is impossible.
	
	\paragraph{Why make the distinction?}
	The localisation of a category by a calculus of fractions is defined by a universal property (\cite[Theorem 21]{pronk}), and so some readers may reasonably wonder why we are emphasising the distinction between a left calculus and a right calculus.  The importance arises by considering when two objects are isomorphic in the localisation.
	
		If $\cat$ is a category with a right calculus of fractions $\Sigma$, a pair of objects $c,d \in \cat$ are isomorphic in the localisation $\cat[\Sigma^{-1}]$ if and only if there is a \emph{span} of morphisms $c \xleftarrow{a} e \xrightarrow{b} d$ where $a, b \in \Sigma$.  Conversely, If $\dcat$ is a category with a left calculus of fractions $\Sigma'$, a pair of objects $c,d \in \dcat$ are isomorphic in the localisation $[\Sigma'^{-1}]\dcat$ if and only if there is a \emph{cospan} of morphisms $c \xrightarrow{a} e \xleftarrow{b} d$ where $a, b \in \Sigma'$.
	
	Thus, the condition of Morita equivalence for localic groupoids derivable from Moerdijk's equivalence \cite[Theorem 7.7]{cont1} is vastly different from the condition we will obtain for topological groupoids in \cref{morita-eqv-for-ectgrpd}.
	
	\begin{prop}\label{prop:no_right_calc_for_top}
		For any bi-subcategory $\cat \subseteq \TopGrpd$, and any bi-calculus of right fractions $\Sigma$ on $\cat$, the restricted bi-functor $\Sh \colon \cat \to \Topos_\WEP^\iso$ cannot induce a bi-equivalence $\Topos_\WEP^\iso  \simeq \cat[\Sigma^{-1}]$.
	\end{prop}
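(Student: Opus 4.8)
The plan is to argue by contradiction, exploiting that in a localisation by a \emph{right} calculus a $1$-cell is a \emph{span} whose backward leg is inverted by $\Sh$, and that such an inverted leg forces the associated topos to have only a bounded supply of points. Suppose, for some bi-subcategory $\cat \subseteq \TopGrpd$ and some right bi-calculus of fractions $\Sigma$ on $\cat$, that the localisation bi-functor $\cat \to \cat[\Sigma^{-1}]$ factors $\Sh$ as $\cat \to \cat[\Sigma^{-1}] \xrightarrow{\overline{\Sh}} \Topos_\WEP^\iso$ with $\overline{\Sh}$ a bi-equivalence. Since the localisation inverts $\Sigma$ and $\overline{\Sh}$ preserves equivalences, every $\sigma \in \Sigma$ is sent by $\Sh$ to an equivalence of topoi; moreover a $1$-cell $\X \to \Y$ in $\cat[\Sigma^{-1}]$ is a span $\X \xleftarrow{\sigma} \W \xrightarrow{\phi} \Y$ with $\sigma \in \Sigma$, and $\overline{\Sh}$ sends it to $\Sh(\phi)\circ \Sh(\sigma)^{-1}$ (a chosen pseudo-inverse of the equivalence $\Sh(\sigma)$).

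I then fix test objects. Let $\topos_{\mathbb O} := \sets^{\Finsets}$ be the object classifier; it has enough points, and $\Topos(\sets, \topos_{\mathbb O})^\iso$ is the (large) groupoid of \emph{all} sets and bijections, so it has objects of arbitrarily large cardinality. By essential surjectivity of $\overline{\Sh}$ there are $\X_0, \Y \in \cat$ with $\Sh(\X_0) \simeq \sets$ and $\Sh(\Y) \simeq \topos_{\mathbb O}$. Let $S \in \Sh(\Y)$ be the sheaf corresponding to the universal object of $\topos_{\mathbb O}$ and $S_0 := u_\Y^\ast S \in \Sh(Y_0)$ its restriction along $u_\Y$. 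Because $\Sh(Y_0)$ is the topos of sheaves on the topological space $Y_0$, its iso-classes of points form a set, so $\lambda := \sup\{\, |q^\ast S_0| : q \text{ a point of } \Sh(Y_0) \,\}$ is a well-defined cardinal.

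The technical heart is the following claim: every $1$-cell $\X_0 \to \Y$ in $\cat[\Sigma^{-1}]$ is sent by $\overline{\Sh}$ to a point of $\topos_{\mathbb O}$ classifying a set of cardinality at most $\lambda$. Given such a $1$-cell, i.e.\ a span $\X_0 \xleftarrow{\sigma} \W \xrightarrow{\phi} \Y$ with $\sigma \in \Sigma$, the map $\Sh(\sigma)$ is an equivalence, so $\Sh(\W) \simeq \sets$; as $\Topos(\sets,\sets)$ is the terminal category, $\Sh(\W)$ has, up to isomorphism, exactly one point. Since $u_\W \colon \Sh(W_0) \to \Sh(\W)$ is a surjection by \cref{u_X-is-surj}, the space $W_0$ is inhabited, so $\Sh(W_0)$ — being the topos of a space — has a point $q$; then $u_\W \circ q$ is a point of $\Sh(\W)$ and is therefore isomorphic to $\Sh(\sigma)^{-1}$ (regarded as a point of $\Sh(\W)$). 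Naturality of $u$ with respect to the continuous functor $\phi$ gives $\Sh(\phi)\circ u_\W \simeq u_\Y \circ \Sh(\phi_0)$, whence
\[
\Sh(\phi)\circ \Sh(\sigma)^{-1} \;\simeq\; \Sh(\phi)\circ u_\W \circ q \;\simeq\; u_\Y \circ \big(\Sh(\phi_0)\circ q\big),
\]
so this point of $\topos_{\mathbb O}$ factors through $u_\Y$ and hence classifies $\big(\Sh(\phi_0)\circ q\big)^\ast S_0$, a stalk of $S_0$ at a point of $\Sh(Y_0)$, of cardinality at most $\lambda$. This proves the claim. But $\overline{\Sh}$ restricts to an equivalence of categories $\cat[\Sigma^{-1}](\X_0,\Y) \xrightarrow{\sim} \Topos(\sets,\topos_{\mathbb O})^\iso$, which is then essentially surjective onto the groupoid of all sets and bijections — impossible, since there are sets of cardinality exceeding $\lambda$.

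\textbf{Main obstacle.} The one genuinely delicate point is the claim, and within it the rigidity step that $\Sh(\W) \simeq \sets$ forces \emph{every} point of $\Sh(\W)$ to arise, via $u_\W$, from the space of objects $W_0$ — equivalently, that the inverted backward leg of any span cannot manufacture points of unbounded complexity. The rest (the stalk bookkeeping over the fixed space $Y_0$, and the choice of $\topos_{\mathbb O}$ as a topos with enough points but a proper class of points) is routine; note in particular that the argument never needs $\cat$ to contain any \emph{specific} presenting groupoid, only representatives of $\sets$ and of $\topos_{\mathbb O}$, which is why it applies to every bi-subcategory $\cat$ and every right calculus $\Sigma$.
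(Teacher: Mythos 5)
Your proof is correct and follows essentially the same route as the paper's: a $1$-cell in a right-fraction localisation is a span whose inverted backward leg forces its apex $\W$ to satisfy $\Sh(\W)\simeq\sets$, surjectivity of $u_\W$ then yields a point of $W_0$ through which the forward leg factors via $u_\Y$, and this is incompatible with the target topos having unboundedly many non-isomorphic points. The paper phrases the final contradiction by fixing in advance a point of the classifying topos of groups that does not factor through $u_\X$ (handling the $Y_0=\emptyset$ case separately), whereas you bound the cardinalities of the stalks reachable over the fixed space $Y_0$ and compare against the object classifier; this is the same counting argument, if anything quantified slightly more carefully on your side.
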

	\begin{proof}
		We construct an example of a geometric morphism that cannot be obtained by a span of continuous functors between topological groupoids.  Let $\X = (X_1 \rightrightarrows X_0) $ be a topological groupoid contained in $\cat \subseteq \TopGrpd$ such that the topos $\Sh(\X)$ has a point $p \colon \sets \to \Sh(\X)$ that does not correspond to a point of $X_0$, i.e.\ there is no factorisation
		\[
		\begin{tikzcd}
			& \Sh(X_0) \ar{d}{u_\X} \\
			\sets \ar[dashed]{ru}[marking]{/} \ar{r}[']{p} & \Sh(\X).
		\end{tikzcd}
		\]
		For example, $\Sh(\X)$ could be the classifying topos for a theory with enough points and unboundedly many models (the theory of groups, for instance), which ensures the existence of such a $p$ for any $X_0$.

		Suppose that there is a bi-equivalence $\Topos_\WEP^\iso \simeq \cat[\Sigma^{-1}]$.  Then there is a continuous functor of topological groupoids $\Phi \colon \Y \to \X \in \cat$ such that
		\[
		\begin{tikzcd}
			\Sh(\Y) \simeq \sets \ar{rr}{\Sh(\Phi) \simeq p} && \Sh(\X).
		\end{tikzcd}
		\]
		Consequently, there is a commutative square of geometric morphisms
		\[
		\begin{tikzcd}
			\Sh(Y_0) \ar{d}[']{u_\Y} \ar{rr}{\Sh(\Phi_0)} &[-30pt] & \Sh(X_0) \ar{d}{u_\X} \\
			\Sh(\Y) & \simeq \sets \ar{r}{p} & \Sh(\X).
		\end{tikzcd}
		\]
		Any point of the space $Y_0$ yields a section of $u_\Y$, as in the diagram
		\[
		\begin{tikzcd}
			\Sh(Y_0) \ar{d}[']{u_\Y} \ar{rr}{\Sh(\Phi_0)} &[-30pt] & \Sh(X_0) \ar{d}{u_\X} \\
			\Sh(\Y) & \simeq \sets \ar[dashed, bend right]{lu}  \ar{r}{p} & \Sh(\X).
		\end{tikzcd}
		\]
		But such a point would yield a factorisation of $p$ through $u_\X$, a contradiction.  So we conclude that $Y_0$ is the empty space.  But then there would be a surjective geometric morphism
		\[\begin{tikzcd}
			\0_\Topos \simeq \Sh(Y_0) \ar{r}{u_\Y} & \Sh(\Y) \simeq \sets.
		\end{tikzcd}\]
		Such a surjection would yield an equivalence $\0_\Topos \simeq \sets$ (because the unique geometric morphism $\0_\topos \to \sets$ is already a geometric embedding), an obvious contradiction\footnote{Of course, assuming the chosen model of set theory is consistent.}, and so we conclude that $\Topos_\WEP^\iso \not \simeq \cat[\Sigma^{-1}]$ as desired.
	\end{proof}
	
	\begin{rem}
		If we restrict to certain bi-subcategories $\dcat \subseteq \Topos_\wep^\iso$, we are then able to obtain a bi-equivalence $\dcat \simeq \cat[\Sigma^{-1}]$ for $\cat \subseteq \TopGrpd$ and a right bi-calculus of fractions on $\cat$.  For example, Pronk establishes in \cite[Theorem 27]{pronk} a bi-equivalence
		\[
		\text{\bf \'{E}tendue}^\iso_{\text{sp}} \simeq \text{\bf \'{E}taleGrpd}[W^{-1}]
		\]
		between the bi-category of (spatial) \emph{étendues} and a localisation of the bi-category of \emph{étale} topological groupoids, i.e.\ those groupoids whose source and target maps are étale/local homeomorphisms, using a right bi-calculus of fractions.  The obstacle identified in \cref{prop:no_right_calc_for_top} is not encountered in the \'etendue case because an \'etendue does not have unboundedly many points.
	\end{rem}

	\section{Logical groupoids}\label{sec:loggrpd}
	
	The eventual bi-equivalence result we will prove involves \emph{étale complete topological groupoids}.  It is of interest to split up the definition of étale completeness for topological groupoids into a condition on the topology on the set of arrows (see \cref{df:loggrpd} below), and a condition on the existence of these arrows (cf.\ \cref{etale_complete_iff_logical+stuff}), as much of the theory we develop requires only the former.  Before introducing these \emph{logical groupoids}, we give the special case for groups to justify that this is a natural topological condition to impose.
	
	\begin{prop}\label{prop:eqv-conditions-for-log-group}
		For $G$ a topological group satisfying the $T_0$ separation axiom, the following are equivalent:
		\begin{enumerate}
			\item\label{enum:open-subgrp-basis} the open subgroups of $G$ form a basis of open neighbourhoods for the identity element;
			\item\label{enum:coarsest-topology} the topology $\tau$ on $G$ is the coarsest topology determined by the topos $\B G^\tau$ -- that is, for any other topology $\sigma \subseteq \tau$ on $G$ making $G$ a topological group, if the evident inclusion functor $\B G^\sigma \to \B G^\tau$ witnesses an equivalence $\B G^\sigma \simeq \B G^\tau$, then $\tau = \sigma$;
			\item\label{enum:subgrp-of-permutation} there is a set $X$ such that $G$ is a topological subgroup of the permutation group $\Omega(X)$, endowed with the `pointwise convergence' topology, i.e.\ the topology where the basic open subgroups
			\[
			\lrset{\alpha \colon X \xrightarrow{\sim} X}{\alpha(\vec{x}) = \vec{x}},
			\]		
			for finite tuples $\vec{x} \in X$, form a basis of open neighbourhoods of the identity element.
		\end{enumerate}
	\end{prop}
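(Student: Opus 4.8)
The plan is to route all three equivalences through a single canonical coarsening of $\tau$. Given $\tau$, I would write $\tau^\circ$ for the group topology on $G$ having the $\tau$-open subgroups as a neighbourhood basis at the identity; this is a legitimate group topology because the collection of $\tau$-open subgroups is stable under finite intersections and under conjugation, and it satisfies $\tau^\circ \subseteq \tau$ since each of these basic $\tau^\circ$-opens is already $\tau$-open. The first thing to record is that $\tau$ and $\tau^\circ$ have \emph{exactly the same} open subgroups: one inclusion is $\tau^\circ \subseteq \tau$, and conversely any $\tau$-open subgroup is a subgroup that is a basic $\tau^\circ$-neighbourhood of the identity, hence $\tau^\circ$-open. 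It follows that $\B G^\tau$ and $\B G^{\tau^\circ}$ are literally the same full subcategory of the category of discrete $G$-sets (a $G$-set is continuous exactly when all its point-stabilisers are open subgroups, and this depends only on which subgroups count as open), so the comparison functor of \cref{enum:coarsest-topology} for $\sigma = \tau^\circ$ is an identity, in particular an equivalence. Finally, \cref{enum:open-subgrp-basis} is, upon unravelling the definition of $\tau^\circ$, precisely the assertion $\tau = \tau^\circ$.

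Granting this, \cref{enum:coarsest-topology} $\Rightarrow$ \cref{enum:open-subgrp-basis} is immediate by instantiating $\sigma = \tau^\circ$. For \cref{enum:open-subgrp-basis} $\Rightarrow$ \cref{enum:coarsest-topology}, I would assume $\tau = \tau^\circ$ and let $\sigma \subseteq \tau$ be a group topology for which the inclusion $\B G^\sigma \hookrightarrow \B G^\tau$ is an equivalence. The key move is to exploit essential surjectivity of this inclusion on the coset $G$-sets: for each $\tau$-open subgroup $H$, the left $G$-set $G/H$ lies in $\B G^\tau$, so it is equivariantly isomorphic to some $\sigma$-continuous $G$-set, and comparing point-stabilisers shows a conjugate of $H$, hence $H$ itself, is $\sigma$-open. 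Therefore $\sigma$ and $\tau$ have the same open subgroups, so $\sigma^\circ = \tau^\circ = \tau$, and from $\sigma^\circ \subseteq \sigma \subseteq \tau$ we conclude $\sigma = \tau$.

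To connect \cref{enum:subgrp-of-permutation}, the easy direction \cref{enum:subgrp-of-permutation} $\Rightarrow$ \cref{enum:open-subgrp-basis} is that if $G$ carries the subspace topology from $\Omega(X)$ then the sets $G \cap \lrset{\alpha}{\alpha(\vec{x}) = \vec{x}}$ (for $\vec{x}\in X$ a finite tuple) form a neighbourhood basis at the identity and are each open subgroups of $G$. For \cref{enum:open-subgrp-basis} $\Rightarrow$ \cref{enum:subgrp-of-permutation} I would take $X = \coprod_{H} G/H$, the disjoint union of the coset spaces over all $\tau$-open subgroups $H$, with the left translation action, giving a homomorphism $\rho \colon G \to \Omega(X)$. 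Injectivity of $\rho$ uses the $T_0$ hypothesis: with \cref{enum:open-subgrp-basis} in force the $\tau$-open subgroups form a neighbourhood basis at the identity, and $T_0$ (hence $T_1$) makes their intersection trivial, so any $g \ne e$ moves some coset $eH$. That $\rho$ is an embedding of topological groups then follows from the single computation that the pointwise stabiliser in $G$ of the point $eH \in X$ is $H$ itself: this yields continuity (preimages of the basic open neighbourhoods of the identity of $\Omega(X)$ are finite intersections of conjugates of $\tau$-open subgroups, hence $\tau$-open) and openness onto the image (each basic $\tau$-open subgroup $H$ is the preimage of the stabiliser of $eH$).

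I expect the genuine content to be concentrated in the equivalence between the topos-theoretic \cref{enum:coarsest-topology} and the group-theoretic statement ``$\tau$ and $\tau^\circ$ have the same open subgroups''. One half of this is the soft remark that $\B G^\tau$ sees only the open subgroups of $G$; the other half is where the coset $G$-sets $G/H$ are used essentially, as the objects whose presence in $\B G^\sigma$ forces $H$ to remain $\sigma$-open. The remaining verifications — that $\tau^\circ$ really is a group topology and that $\rho$ is open onto its image — are routine, and I would only sketch them.
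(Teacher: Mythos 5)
Your proposal is correct, but it follows a genuinely different route from the paper. The paper dispatches \cref{enum:open-subgrp-basis} $\iff$ \cref{enum:coarsest-topology} with a one-line appeal to the site description of $\B G^\tau$ in terms of open subgroups (citing \cite[\S III.9]{SGL}), and proves \cref{enum:coarsest-topology} $\implies$ \cref{enum:subgrp-of-permutation} by invoking the classification of representing groupoids (\cref{class-repr-grpds-for-theory}): it picks a single-sorted theory $\theory$ classified by $\B G$, realises $G$ as the automorphism group of a $\theory$-model with the pointwise convergence topology, and takes $X$ to be that model's underlying set. You instead work entirely at the level of topological groups: the auxiliary topology $\tau^\circ$ cleanly packages the observation that $\B G^\tau$ sees only the open subgroups, your essential-surjectivity argument on the coset $G$-sets $G/H$ is exactly the substance hidden in the paper's citation of the site construction, and your embedding $G \hookrightarrow \Omega\bigl(\coprod_H G/H\bigr)$ replaces the classifying-topos machinery with the classical permutation-representation argument (essentially \cite[\S 4]{hodges}). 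What the paper buys is brevity, by reusing infrastructure it needs anyway; what you buy is a self-contained, elementary proof that also makes explicit where the $T_0$ hypothesis enters (forcing $\bigcap H = \{e\}$ so that $\rho$ is injective), a point the paper leaves implicit inside the cited classification. All the steps you flag as routine (that $\tau^\circ$ is a group topology, that stabilisers of $gH$ are conjugates of $H$, that $\rho$ is open onto its image) do check out.
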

	\begin{proof}
		One implication, \cref{enum:subgrp-of-permutation} $\implies$ \cref{enum:open-subgrp-basis}, is immediate.  The equivalence \cref{enum:open-subgrp-basis} $\iff$ \cref{enum:coarsest-topology} effectively asserts that the topos $\B G^\tau$ is determined by the open subgroups of $G$, which follows from the construction of a site for $\B G^\tau$ given in \cite[\S III.9]{SGL}.  To prove \cref{enum:coarsest-topology} $\implies$ \cref{enum:subgrp-of-permutation}, let $\theory$ be a (single-sorted) theory classified by $\B G$.  Then, by \cref{class-repr-grpds-for-theory}, the group $G$ is the automorphism group of a model of $\theory$, endowed with the pointwise convergence topology.  Taking $X$ as the underlying set of the model, the result now follows.
	\end{proof}

	\begin{df}\label{df:loggrpd}
		\begin{enumerate}
			\item A topological group satisfying one of the equivalent conditions in \cref{prop:eqv-conditions-for-log-group} is said to be \emph{logical}.
			
			\item A topological groupoid $\Xtt = (X_1^{\tau_1} \rightrightarrows X_0^{\tau_0})$ is said to be a \emph{logical groupoid} if:
			\begin{enumerate}
				\item the groupoid $\Xtt$ is open;
				\item both $X_0^{\tau_0}$ and $X_1^{\tau_1}$ are $T_0$ spaces;
				\item for any other topology $\sigma \subseteq \tau_1$ on $X_1$, whenever the evident inclusion $\Sh\left(\X_{\tau_0}^{\sigma}\right) \to \Sh\left(\Xtt\right)$ witnesses an equivalence $\Sh\left(\X_{\tau_0}^{\sigma}\right) \simeq \Sh\left(\Xtt\right)$, then $\tau_1 = \sigma$.  In other words, $\tau_1$ is the coarsest topology on $X_1$ determined by the topos $\Sh(\Xtt)$ (and so logical groups are logical groupoids).
			\end{enumerate}
		\end{enumerate}
	\end{df}

	\begin{ex}
		Let $\theory$ be a geometric theory, and let $\X$ be a groupoid of $\theory$-models indexed by a set of parameters $\Index$ such that $\X$ eliminates parameters.  The resultant topological groupoid $\X$, where the logical topologies from \cref{ex:definable_sheaves} have been endowed, is a logical groupoid by \cite[Proposition 6.4 \& Lemma 7.3]{myselfgrpds}.
	\end{ex}
	
	The terminology `logical groupoid' has been chosen because the above construction suffices to describe all logical groupoids: 
	
	\begin{prop}\label{class-of-loggrpd}
		An open topological groupoid $\Xtt = \left(X_1^{\tau_1} \rightrightarrows X_0^{\tau_0}\right)$ is a logical groupoid if and only if there exists a signature $\Sigma$ such that
		\begin{enumerate}
			\item $\X$ is a groupoid of set-based $\Sigma$-structures,
			\item there is an indexing of these models by a set of parameters $\Index$ for which the constituent topologies $\tau_1, \tau_0$ on $\X$ are the logical topologies for this indexing (see \cref{ex:definable_sheaves}).
		\end{enumerate}
	\end{prop}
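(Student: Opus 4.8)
The plan is to recognise $\Sh(\Xtt)$ as a classifying topos and then transport the classification of representing groupoids in \cref{class-repr-grpds-for-theory} across the resulting equivalence; the two remaining clauses of \cref{df:loggrpd} (the $T_0$ axiom and the coarseness condition) are precisely what single out the logical topologies among all topologies making $\X$ a representing groupoid. For the forward implication, suppose $\Xtt$ is a logical groupoid. By \cite[Theorem 2.1.11]{TST} we have $\Sh(\Xtt) \simeq \topos_\theory$ for some geometric theory $\theory$ over a signature $\Sigma$. Since $\Xtt$ is open and $X_0^{\tau_0}, X_1^{\tau_1}$ are $T_0$, clause (i) of \cref{class-repr-grpds-for-theory} holds, so clause (ii) presents $\X$ as a groupoid of set-based $\Sigma$-structures (indeed of $\theory$-models) carrying an $\Index$-indexing for which $X_0$ is conservative and $\X$ eliminates parameters.

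It remains to identify the given $\tau_0, \tau_1$ with the logical topologies $\tau_0^{\log}, \tau_1^{\log}$ for this data. Here I would invoke \cite{myselfgrpds}: the space of objects of any open $T_0$ groupoid representing $\topos_\theory$ necessarily carries the logical topology (alternatively, $X_0$ carries the quotient topology along the open surjection $s$), so $\tau_0 = \tau_0^{\log}$, and the arrow topology of such a groupoid refines $\tau_1^{\log}$. By \cite{myselfgrpds} again, $\X_{\tau_0^{\log}}^{\tau_1^{\log}}$ also represents $\topos_\theory$ via the evident inclusion $\Sh(\X_{\tau_0^{\log}}^{\tau_1^{\log}}) \to \Sh(\Xtt)$; since $\tau_0 = \tau_0^{\log}$ and $\tau_1^{\log} \subseteq \tau_1$, the coarseness clause of \cref{df:loggrpd} forces $\tau_1 = \tau_1^{\log}$, completing this direction.

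For the backward implication, suppose $\tau_0, \tau_1$ are the logical topologies of some signature $\Sigma$ and $\Index$-indexing of a groupoid $\X$ of set-based $\Sigma$-structures. Openness is the standing hypothesis. The $T_0$ axiom holds because two distinct $\Index$-indexed structures are separated by a geometric sentence with parameters — an equality $x_i = x_j$ recording a coincidence among named elements, or an atomic formula recording the structure — so $X_0^{\tau_0}$ and $X_1^{\tau_1}$ are $T_0$. For the coarseness clause, let $\sigma \subseteq \tau_1$ make $(X_1^\sigma \rightrightarrows X_0^{\tau_0})$ a topological groupoid for which $\Sh(\X_{\tau_0}^\sigma) \to \Sh(\Xtt)$ is an equivalence. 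Then $\sigma$ contains $s^{-1}(\tau_0)$ and $t^{-1}(\tau_0)$, and since the basic opens of $\tau_1$ are finite intersections of sets of the form $s^{-1}(\classv{\phi}{m}_\X)$, $t^{-1}(\classv{\psi}{m}_\X)$ and $\class{\vec m \mapsto \vec m'}_\X$, it suffices to show each $\class{\vec m \mapsto \vec m'}_\X$ lies in $\sigma$. This is where the equivalence enters: choosing a context $\vec x$ long enough to carry $\vec m$, the definable sheaf $\classv{\phi}{x}_\X$ with its logical topology is, as an object of $\Sh(\Xtt)$, isomorphic to a sheaf over $\X_{\tau_0}^\sigma$, and transporting that isomorphism along the induced homeomorphism of total spaces shows that the logical $X_1$-action $\beta$ on $\classv{\phi}{x}_\X$ is already $\sigma$-continuous. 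One then exhibits $\class{\vec m \mapsto \vec m'}_\X$ as the preimage of the open set $\beta^{-1}\bigl(\class{\vec x, \vec m' : \phi \wedge (\vec x = \vec m')}_\X\bigr)$ along the $\sigma$-continuous map $\alpha \mapsto \bigl(\alpha, \langle \vec m, s(\alpha)\rangle\bigr)$ from $X_1^\sigma$ to $X_1^\sigma \times_{X_0} \classv{\phi}{x}_\X$, hence $\sigma$-open. The facts about logical topologies used here are essentially \cite[Proposition 6.4 \& Lemma 7.3]{myselfgrpds}.

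I expect the coarseness clause to be the main obstacle: concretely, the backward-direction point that no topos-preserving coarsening of $\tau_1^{\log}$ can omit the parameter-transport opens $\class{\vec m \mapsto \vec m'}_\X$, and, dually, the input from \cite{myselfgrpds} that a representing open $T_0$ groupoid cannot carry topologies strictly finer than the logical ones. Everything else is assembly on top of \cref{class-repr-grpds-for-theory}; the organising idea is that the logical topology is exactly the topology recording the action of $X_1$ on the definable sheaves, so it is at once fine enough to represent $\topos_\theory$ and coarse enough to be forced.
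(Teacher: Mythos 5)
Your proposal is correct and follows essentially the same route as the paper: both directions are driven by \cref{class-repr-grpds-for-theory} together with the results of \cite{myselfgrpds}, with the coarseness clause of \cref{df:loggrpd} doing the work of forcing $\tau_1$ down onto the logical topology in the forward direction. The one substantive difference is in the backward direction: the paper simply cites \cite[Proposition 6.4]{myselfgrpds} for the fact that the logical topologies yield a logical groupoid, whereas you unpack that citation into an explicit argument — showing that any topos-preserving coarsening $\sigma \subseteq \tau_1$ must keep the parameter-transport sets $\class{\vec{m} \mapsto \vec{m}'}_\X$ open because the action on the definable sheaves remains $\sigma$-continuous. That argument is sound, and it is in fact the same technique the paper deploys later in the proof of \cref{etale_complete_iff_logical+stuff}, so your version buys a more self-contained proof at the cost of length. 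The only point you gloss over is the paper's preliminary reduction to an \emph{inhabited} theory (via the dummy-sort trick) before invoking \cref{class-repr-grpds-for-theory} and \cite[Remark 5.5]{myselfgrpds} in the forward direction; this is a technical precondition for identifying $\tau_0$ with the logical topology on objects when empty sorts are around, and is worth flagging, though it does not affect the architecture of your argument. (Your parenthetical that $\tau_0$ is ``the quotient topology along the open surjection $s$'' is not right as stated and should be dropped in favour of the citation you also give.)
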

	\begin{proof}
		This characterisation of logical groupoids is a synthesis of results from \cite{myselfgrpds}.  If $\X$ is a groupoid of indexed $\Sigma$-structures, then resultant topological groupoid given by endowing the logical topologies is logical by \cite[Proposition 6.4]{myselfgrpds}.  
		
		Conversely, suppose that $\Xtt$ is logical, and let $\theory$ be a geometric theory classified by $\Sh(\Xtt)$.  Moreover, we can assume that $\theory$ is \emph{inhabited}, i.e.\ $\theory$ proves the sequent $\top \vdash \exists x \, \top$ -- the idea is the following: every geometric theory $\theory'$ is equivalent to one with a non-empty `dummy sort', i.e.\ a sort with one constant symbol $c$ and the axiom $\top \vdash_x x = c$, which can then be amalgamated into any uninhabited sort of $\theory'$ using the techniques of \cite[Lemma D1.4.13]{elephant} (this also follows from \cite[\S VII.3]{JT}).  Then by \cref{class-repr-grpds-for-theory}, $\X$ can be identified with a groupoid of indexed $\theory$-models, i.e.\ a groupoid of indexed $\Sigma$-structures where $\Sigma$ is the underlying signature of $\theory$.  By \cite[Remark 5.5]{myselfgrpds}, the induced logical topology for objects is precisely $\tau_0$.  By \cite[Proposition 6.4]{myselfgrpds}, the topology $\tau_1$ on the space of arrows contains the logical topology for arrows, while the reverse inclusion follows as $\X$ is logical.
	\end{proof}

	The restriction to logical groupoids is not prohibitive by the following observation:
	
	\begin{coro}
		For every open topological groupoid $\Xtt = (X_1^{\tau_1} \rightrightarrows X_0^{\tau_0})$ in which $\tau_0$ and $\tau_1$ are both $T_0$ topologies, by choosing a new topology $\sigma $ on $X_1$, we obtain a logical groupoid $\X_{\tau_0}^{\sigma} = (X_1^{\sigma} \rightrightarrows X_0^{\tau_0})$ for which $\Sh\left(\X_{\tau_0}^{\sigma}\right) \simeq \Sh\left(\Xtt\right)$.
	\end{coro}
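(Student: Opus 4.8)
My plan is to reduce the statement to the characterisation of logical groupoids already obtained in \cref{class-of-loggrpd}, together with the classification of representing groupoids in \cref{class-repr-grpds-for-theory}. The coarser topology $\sigma$ I will exhibit on $X_1$ is simply the logical topology for an appropriate parameter-indexing of the models in $X_0$.

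First I would choose, using the fact that every topos classifies a geometric theory, a theory $\theory$ with $\Sh(\Xtt) \simeq \topos_\theory$; as in the proof of \cref{class-of-loggrpd}, amalgamating a dummy sort lets me assume $\theory$ is inhabited. Since $\Xtt$ is open and $\tau_0,\tau_1$ are $T_0$, the groupoid $\Xtt$ satisfies condition (i) of \cref{class-repr-grpds-for-theory} for $\theory$, hence condition (ii): the underlying abstract groupoid $\X$ may be regarded as a groupoid of set-based $\theory$-models equipped with a conservative indexing $\Index \paronto \X$ that eliminates parameters. Exactly as in the proof of \cref{class-of-loggrpd} (via \cite[Remark 5.5]{myselfgrpds}), the logical topology on $X_0$ attached to this indexing is precisely $\tau_0$.

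Then I would take $\sigma$ to be the logical topology on $X_1$ determined by this indexing, as in \cref{ex:definable_sheaves}. By construction $\X_{\tau_0}^{\sigma}$ is the topological groupoid obtained by endowing the parameter-eliminating groupoid $\X$ with its logical topologies, so by the example immediately following \cref{class-of-loggrpd} (i.e.\ \cite[Proposition 6.4 \& Lemma 7.3]{myselfgrpds}) it is logical, and by the remarks following \cref{class-repr-grpds-for-theory} we get $\Sh(\X_{\tau_0}^{\sigma}) \simeq \topos_\theory \simeq \Sh(\Xtt)$. One also sees that $\sigma \subseteq \tau_1$, since by \cite[Proposition 6.4]{myselfgrpds} the topology $\tau_1$ always contains the logical topology on arrows, so $\sigma$ is a genuine coarsening, though the statement as phrased does not require this.

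The main point to be careful about is the interface with \cref{class-repr-grpds-for-theory}: one must check that the indexing produced from condition (ii) is compatible with the given topology in the sense that its logical topology on objects is $\tau_0$, rather than merely some other $T_0$ topology with an equivalent sheaf topos. This is precisely the issue handled in the proof of \cref{class-of-loggrpd} by invoking \cite[Remark 5.5]{myselfgrpds}, so I do not expect genuinely new work here — the remainder is bookkeeping.
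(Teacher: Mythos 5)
Your proposal is correct and follows essentially the same route as the paper: pick an inhabited theory $\theory$ classified by $\Sh(\Xtt)$, use \cref{class-repr-grpds-for-theory} to identify $\X$ with a conservative, parameter-eliminating groupoid of indexed $\theory$-models, take $\sigma$ to be the resulting logical topology on arrows, and invoke \cite[Remark 5.5]{myselfgrpds} to see that the logical topology on objects is already $\tau_0$. The only difference is that you spell out explicitly why condition (i) of \cref{class-repr-grpds-for-theory} applies and why the sheaf topoi agree, which the paper leaves implicit.
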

	\begin{proof}
		Let $\theory$ be an inhabited geometric theory classified by $\Sh(\Xtt)$.  Then, by \cref{class-repr-grpds-for-theory}, $\X$ can be identified with a groupoid of indexed $\theory$-models that is both conservative and eliminates parameters.  When given the logical topologies, the resultant topological groupoid is open by \cite[Lemma 7.3]{myselfgrpds} and logical by \cref{class-of-loggrpd} (and moreover the logical topology on objects is the topology $\tau_0$ by \cite[Remark 5.5]{myselfgrpds}).
	\end{proof}
	
	\begin{df}
		Let $\X$ be topological groupoid.  A (\emph{topological}) \emph{subgroupoid} $\Y$ of $\X$ consists of a subgroupoid
		\[
		\begin{tikzcd}
			Y_1 \times_{Y_0} Y_1 \ar{r} \ar[hook]{d} &	Y_1 \ar[shift left=2]{r} \ar[shift right=2]{r} \ar[hook]{d} & Y_0 \ar[hook]{d} \ar{l} \\
			X_1 \times_{X_0} X_1 \ar{r} &	X_1 \ar[shift left=2]{r} \ar[shift right=2]{r} & X_0 \ar{l}
		\end{tikzcd}
		\]
		where $Y_1 \hookrightarrow X_1$ and $Y_0 \hookrightarrow X_0$ are subspace inclusions.  Such a groupoid is defined by any subset $Y_1 \subseteq X_1$ that is closed under composition and inverses as the space of objects can be recovered as $s(Y_1) = t(Y_1)$.
	\end{df}
	
	\begin{coro}
		If $\X$ is a logical groupoid, then any (topological) subgroupoid $\Y \hookrightarrow \X$ is a logical groupoid too.
	\end{coro}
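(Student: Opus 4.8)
The plan is to reduce to the characterisation of logical groupoids as groupoids of indexed structures, \cref{class-of-loggrpd}, and observe that this description is inherited by subgroupoids. First I would apply \cref{class-of-loggrpd} to $\X$: this lets us identify $\X$ with a groupoid of set-based $\Sigma$-structures, for a suitable signature $\Sigma$, whose topologies $\tau_0$ and $\tau_1$ on $X_0$ and $X_1$ are the logical topologies for some indexing $\Index \paronto \X$. Now let $\Y \hookrightarrow \X$ be a topological subgroupoid, determined by a subset $Y_1 \subseteq X_1$ closed under composition and inverses with $Y_0 = s(Y_1) = t(Y_1)$. Each structure in $Y_0$ retains the $\Index$-indexing it carries as an object of $\X$, and each arrow of $Y_1$ is still a $\Sigma$-structure isomorphism, so $\Y$ is again a groupoid of $\Index$-indexed $\Sigma$-structures. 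By \cref{class-of-loggrpd}, it then remains only to verify two things: that the subspace topologies on $Y_0$ and $Y_1$ are the logical topologies for this inherited indexing, and that $\Y$ is open.

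For the topologies, the argument is a direct comparison of the generating bases. On objects, a basic open of the logical topology on $Y_0$ is a set of the form $\classv{\phi}{m}_\Y$ with $\phi$ a geometric formula and $\vec m \in \Index$, and straight from the defining descriptions one has $\classv{\phi}{m}_\Y = \classv{\phi}{m}_\X \cap Y_0$; since the sets $\classv{\phi}{m}_\X$ generate $\tau_0$, the sets $\classv{\phi}{m}_\X \cap Y_0$ generate the subspace topology on $Y_0$, and these are exactly the basic opens just described. The same bookkeeping applies verbatim to the space of arrows: each generating open of $\tau_1$ (of the form exhibited in \cref{ex:definable_sheaves}) meets $Y_1$ in the corresponding generating open of the logical topology on $Y_1$. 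Hence the subspace topologies and the logical topologies on $Y_0$, resp.\ $Y_1$, share a basis and therefore coincide.

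Finally, since $\Y$ carries the logical topologies for a groupoid of indexed structures, it is open by \cite[Lemma 7.3]{myselfgrpds}, and we conclude via \cref{class-of-loggrpd} that $\Y$ is a logical groupoid. I do not expect a substantive obstacle: the only non-formal ingredient is the openness of $\Y$, which is not automatic for subgroupoids of open groupoids carrying subspace topologies in general, but which is delivered here precisely because those subspace topologies turn out to be logical topologies — whence \cite[Lemma 7.3]{myselfgrpds} applies. The rest is the elementary observation that intersecting a basic open of a logical topology with a subspace of objects, or of arrows, produces a basic open of the logical topology of the restricted groupoid.
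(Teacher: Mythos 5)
Your proof is correct and follows essentially the same route as the paper: identify $\X$ with a groupoid of indexed $\Sigma$-structures carrying the logical topologies via \cref{class-of-loggrpd}, observe that $\Y$ inherits this description together with the indexing, and conclude by the same proposition. The only divergence is that the paper asserts openness of $\Y$ directly from its being a subgroupoid of an open groupoid, whereas you derive it from \cite[Lemma 7.3]{myselfgrpds} after checking that the subspace topologies coincide with the logical topologies --- a slightly more careful treatment of that one step.
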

	\begin{proof}
		First note that, being a subgroupoid of an open topological groupoid, $\Y$ is also open, and that $Y_0$ and $Y_1$ also inherit the $T_0$ separation axiom from $X_0$ and $X_1$.  By \cref{class-of-loggrpd}, $\X$ can be identified with a groupoid of indexed set-based structures for some signature $\Sigma$, and the topologies on $\X$ are the logical topologies for this indexing.  Thus, $\Y$ is also identified with a groupoid of indexed $\Sigma$-structures, and the topologies on $\Y$ are induced logical topologies.  Hence, by \cref{class-of-loggrpd}, $\Y$ is logical.
	\end{proof}
	
	\section{Weak equivalences of logical groupoids}\label{sec:weqv}
	
	In this section, we identify the class of \emph{weak equivalences} $\weqv$ of logical groupoids.  We begin by stating their classification, whose proof is the primary aim of the remainder of this section.
	
	\begin{dfs}
	\begin{enumerate}
		\item   Let $\X$ be a topological groupoid and $\Y \subseteq \X$ a subgroupoid.  If the inclusions $Y_1 \subseteq X_1$ and $Y_0 \subseteq X_0$ are the inclusions of {open} subspaces, we say that $\Y$ is an \emph{open subgroupoid} of $\X$.
		
		\item For any groupoid $\X$ and a pair of subgroupoids $\Y, \W \subseteq \X$, a subset $V \subseteq X_1$ of arrows canonically admits a $Y_1$-action and a $W_1$-action given, respectively, by precomposition and postcomposition, i.e.\ for $\beta \in Y_1$ and $\gamma \in W_1$,
		\[
		\left(x \xrightarrow{\alpha} y\right) \cdot \beta = \left(x' \xrightarrow{\alpha \circ \beta} y \right), \quad \gamma \cdot \left(x \xrightarrow{\alpha} y \right) = \left(x \xrightarrow{\gamma \circ \alpha} y'\right).
		\]
		The \emph{bi-orbit} of $x \xrightarrow{\alpha} y \in V$ is then the set $\lrset{\eta \in V}{\exists \beta \in Y_1, \, \exists \gamma \in W_1 \text{ such that } \eta = \gamma \circ \alpha \circ \beta}$, which, breaking from our previous notation in  \cref{notation-for-sheaves}, we denote as ${ _\W[\alpha]_\Y}$.  We will use ${ _\W[V]_\Y}$ to denote the set of bi-orbits of elements of $V$.  Note that $_\W[V]_\Y$ is evidently a subquotient of $X_1$, i.e.\ there is a diagram of maps
		\[
		\begin{tikzcd}
			X_1 & \ar[hook']{l} V \ar[two heads]{r} & _\W[V]_\Y.
		\end{tikzcd}
		\]
		Therefore, if $\X$ is moreover a topological groupoid, then $_\W[V]_\Y$ can be endowed with the (sub)quotient topology.
		
		\item (Définition 10.2.2 \cite{EGAIV}) Recall that a continuous map $f \colon X \to Y$ between spaces is a \emph{quasi-homeomorphism} if the inverse image map on open subsets $f^{-1} \colon \opens(Y) \to \opens(X)$ is bijective (equivalently, if the inverse image map $f^{-1} \colon \mathcal{C}(Y) \to \mathcal{C}(X)$ on closed subsets is a bijection).
	\end{enumerate}	
	\end{dfs}
	
	\begin{thm}\label{classification-of-weqv}
		Let $\X$ be a logical groupoid and let $i \colon \Y \hookrightarrow \X$ be a subgroupoid.  The inclusion $i \colon \Y \hookrightarrow \X$ induces an equivalence of sheaf topoi $\Sh(i) \colon \Sh(\Y) \xrightarrow{\sim} \Sh(\X)$ if and only if, for every open subgroupoid $\U = (U_1 \rightrightarrows U_0)$ of $\X$, the continuous map
		\begin{align*}
			\iota \colon  { _\Y[s^{-1}(U_0) \cap t^{-1}(Y_0)]_\U} & \to { _\X[s^{-1}(U_0)]_\U  }, \\
			{ _\Y[\alpha]_\U } & \mapsto { _\X[\alpha]_\U }
		\end{align*}
		is a {quasi-homeomorphism}.
	\end{thm}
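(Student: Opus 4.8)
The plan is to recast the right-hand condition as a statement about a generating family of $\Sh(\X)$, and then to read it off the topology of the displayed spaces.

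By \cref{class-of-loggrpd} I may present $\X$, and hence the subgroupoid $\Y$, as a groupoid of indexed $\Sigma$-structures carrying the logical topologies, and by \cref{class-repr-grpds-for-theory} fix an inhabited theory $\theory$ with $\Sh(\X)\simeq\topos_\theory$. For an open subgroupoid $\U=(U_1\rightrightarrows U_0)\hookrightarrow\X$, let $E_\U\in\Sh(\X)$ be the sheaf obtained from $s^{-1}(U_0)$, topologised as a subquotient of $X_1$, by quotienting out precomposition by $U_1$ and viewing the result over $X_0$ via $t$ with the postcomposition $X_1$-action; the openness of $\X$ makes $E_\U\to X_0$ a local homeomorphism, and $\Sh(\U)\to\Sh(\X)$ is the corresponding étale geometric morphism $\Sh(\X)/E_\U\to\Sh(\X)$. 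The first key observation, again using that $s$ is open, is that the $X_1$-invariant opens of $E_\U$ are exactly the sets $s^{-1}(A)/U_1$ for $U_1$-invariant opens $A\subseteq U_0$; this yields a frame isomorphism
\[
\opens\!\left({}_{\X}[s^{-1}(U_0)]_\U\right)\;\cong\;\Sub_{\Sh(\X)}(E_\U).
\]
Pulling $E_\U$ back along $Y_0\hookrightarrow X_0$ identifies $\Sh(i)^\ast E_\U$ with $\big(s^{-1}(U_0)\cap t^{-1}(Y_0)\big)/U_1$, and hence similarly $\opens\!\big({}_{\Y}[s^{-1}(U_0)\cap t^{-1}(Y_0)]_\U\big)\cong\Sub_{\Sh(\Y)}(\Sh(i)^\ast E_\U)$, in such a way that $\iota^{-1}$ on opens becomes the comparison map $\Sub_{\Sh(\X)}(E_\U)\to\Sub_{\Sh(\Y)}(\Sh(i)^\ast E_\U)$ induced by $\Sh(i)^\ast$. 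So the right-hand side of the theorem says precisely: $\Sh(i)^\ast$ is bijective on subobjects of $E_\U$ for every open subgroupoid $\U$.

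Next I would establish a generating lemma: $\{E_\U : \U\text{ an open subgroupoid of }\X\}$ is a generating family of $\Sh(\X)$, and likewise with $\Y$ in place of $\X$. Given a sheaf $A$ and a point $a$ lying over $x_0\in X_0$, pick a local section $\sigma\colon W\to A$ of $A\to X_0$ through $a$; the set $\U:=\{\gamma\in X_1 : s\gamma,t\gamma\in W,\ \gamma\cdot\sigma(s\gamma)=\sigma(t\gamma)\}$ is a subgroupoid, and it is \emph{open} since it is the preimage of the diagonal of the local homeomorphism $A\to X_0$, which is open; the equivariant map $E_\U\to A$, $[\gamma]\mapsto\gamma\cdot\sigma(s\gamma)$, then has $a$ in its image. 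This family is closed under subobjects — a subsheaf of $E_\U$ is $E_{\U|_A}$ for a $U_1$-invariant open $A\subseteq U_0$ — and binary products of its members decompose as coproducts of its members.

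With these ingredients the theorem follows. If $\Sh(i)$ is an equivalence then $\Sh(i)^\ast$ is an equivalence of categories, hence bijective on every subobject lattice, which is the right-hand condition. Conversely, bijectivity on subobjects over a generating family closed under (sub)products-up-to-coproduct forces $\Sh(i)^\ast$ to be conservative — so $\Sh(i)$ is a surjection — and fully faithful on that family; a comparison-lemma argument then upgrades this to an equivalence, \emph{provided} the objects $\Sh(i)^\ast E_\U$ generate $\Sh(\Y)$. This proviso is the main obstacle: one must show that open subgroupoids of $\X$ already account, up to the equivalences at play, for the generating family of $\Sh(\Y)$ produced by the lemma above. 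The difficulty is that $Y_1$ carries only the \emph{subspace} topology from $X_1$, so an open subgroupoid of $\Y$ need not be open in $\X$; it is here that one must use the hypothesis for \emph{all} open subgroupoids $\U$, not merely the full ones. I expect the argument to extract surjectivity of $\Sh(i)$ from the conservativity above and then bootstrap the required generation from it — morally, the injectivity of each $\iota^{-1}$ encodes that $\Y$ retains enough objects for $\Sh(i)$ to be surjective, while the surjectivity of each $\iota^{-1}$ encodes that $\Sh(i)$ is moreover an inclusion.
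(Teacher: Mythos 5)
Your identification of $\opens\bigl({}_\X[s^{-1}(U_0)]_\U\bigr)$ with $\Sub_{\Sh(\X)}(\lrangle{\U})$ for the Moerdijk generator $\lrangle{\U}=E_\U$, and of $\iota^{-1}$ with the comparison map $\Sub_{\Sh(\X)}(\lrangle{\U})\to\Sub_{\Sh(\Y)}(\Sh(i)^\ast\lrangle{\U})$, is exactly what the paper does, and the forward direction is fine. But the converse contains a genuine gap, which you yourself flag as ``the main obstacle'': your comparison-lemma strategy needs the objects $\Sh(i)^\ast E_\U$ to generate $\Sh(\Y)$, and you offer only the hope of ``bootstrapping'' this from conservativity. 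That hope cannot be realised by the means you describe. For an arbitrary geometric morphism, inducing isomorphisms on the subobject lattices of a generating family of the \emph{codomain} is precisely the condition of being \emph{hyperconnected} (cf.\ \cite[Proposition A4.6.6]{elephant}), and hyperconnected morphisms are in general very far from equivalences. Tellingly, your argument never makes essential use of the hypothesis that $\X$ is \emph{logical}, yet the statement fails without it: for $\Z\subseteq\Q$ with the Euclidean topology, the only open subgroup of $\Q$ is $\Q$ itself, so every $\iota$ is a homeomorphism of one-point spaces, but $\B\Z\not\simeq\B\Q\simeq\sets$.

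The paper closes exactly this gap by first proving \cref{incl-of-grpd-induces-localic}: because $\X$ is logical, the subgroupoid $\Y$ classifies a \emph{relational expansion} of a theory classified by $\Sh(\X)$, and hence $\Sh(i)$ is \emph{localic}. Since a geometric morphism is an equivalence if and only if it is both localic and hyperconnected, for the localic morphism $\Sh(i)$ the subobject-lattice condition on a generating family of $\Sh(\X)$ — i.e.\ your quasi-homeomorphism condition on the Moerdijk generators — is exactly equivalent to $\Sh(i)$ being an equivalence (\cref{coro:weqv_iff_hyp}). Localicity is the precise substitute for your missing generation statement: it asserts that subobjects of objects of the form $\Sh(i)^\ast E$ generate $\Sh(\Y)$, which together with surjectivity of each $\iota^{-1}$ delivers what your comparison lemma needs. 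To repair your proof you must supply this localicity input (or an equivalent), and that is where the logical-groupoid hypothesis does its real work.
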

	
	We spell out some of the notation: $s^{-1}(U_0) \cap t^{-1}(Y_0)$ is the set of all arrows $x \xrightarrow{\alpha} y \in X_1$ with $x \in U_0$ and $y \in Y_0$.  As $\U$ is a groupoid, the composite $\alpha \circ \beta$ of any such $\alpha $ with an arrow $\beta \in U_1$ remains an arrow whose source is contained in $U_0$.  Likewise, post-composition with arrows in $Y_1$ keeps the target in $Y_0$.  Thus, the quotient space ${ _\Y[s^{-1}(U_0) \cap t^{-1}(Y_0)]_\U}$ is easily formed, as is ${ _\X[s^{-1}(U_0)]_\U}$.

	\begin{df}
	An inclusion $i \colon \Y \hookrightarrow \X$ satisfying the hypotheses of \cref{classification-of-weqv} is said to be a \emph{weak equivalence}.  We denote the class of weak equivalences of logical groupoids by $\weqv$.
	\end{df}
	
	\begin{coro}\label{weqv-is-wide}
		The class $\weqv$ of weak equivalences is wide, i.e.\ it contains all identities and is closed under composition and continuous isomorphisms.
	\end{coro}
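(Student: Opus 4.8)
The plan is to reduce the corollary to the characterisation supplied by \cref{classification-of-weqv}, which tells us that an inclusion $i \colon \Y \hookrightarrow \X$ of a subgroupoid into a logical groupoid lies in $\weqv$ precisely when the induced geometric morphism $\Sh(i) \colon \Sh(\Y) \to \Sh(\X)$ is an equivalence of topoi. Since $\Sh$ is a bi-functor, and since the class of equivalences in $\Topos$ contains all identities, is closed under composition, and is stable under pre- and post-composition with equivalences, all three assertions should then follow formally. The only non-trivial bookkeeping is to verify that the composites and isomorphic images involved are again subspace inclusions of subgroupoids of logical groupoids, so that \cref{classification-of-weqv} is literally applicable at each step.

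For identities: $\id_\X \colon \X \hookrightarrow \X$ exhibits $\X$ as a subgroupoid of itself, and $\Sh(\id_\X) \simeq \id_{\Sh(\X)}$ is an equivalence (alternatively, in the quasi-homeomorphism criterion one has $Y_0 = X_0$ and $Y_1 = X_1$, so the map $\iota$ of \cref{classification-of-weqv} is an identity map for every open subgroupoid $\U$). For composition, suppose $i \colon \Y \hookrightarrow \X$ and $j \colon \X \hookrightarrow \W$ are weak equivalences, with $\W$ (hence $\X$ and $\Y$) logical. By transitivity of the subspace topology, $Y_0 \subseteq W_0$ and $Y_1 \subseteq W_1$ are subspace inclusions, and $Y_1$ remains closed under composition and inverses inside $W_1$; thus $j \circ i \colon \Y \hookrightarrow \W$ is an inclusion of a subgroupoid of a logical groupoid, and \cref{classification-of-weqv} applies to it. By bi-functoriality, $\Sh(j \circ i) \simeq \Sh(j) \circ \Sh(i)$ is a composite of equivalences, hence an equivalence, so $j \circ i \in \weqv$. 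Finally, stability under continuous isomorphisms: given $i \colon \Y \hookrightarrow \X$ in $\weqv$ together with continuous isomorphisms $\phi \colon \Y' \xrightarrow{\sim} \Y$ and $\psi \colon \X \xrightarrow{\sim} \X'$ of logical groupoids, the composite $\psi \circ i \circ \phi$ is a homeomorphism of $\Y'$ onto a subgroupoid of $\X'$ (homeomorphisms carry subspaces to subspaces and preserve composition and inverses), hence — up to the induced homeomorphism on its domain — is an inclusion of a subgroupoid of a logical groupoid, to which \cref{classification-of-weqv} applies; and $\Sh(\psi \circ i \circ \phi) \simeq \Sh(\psi) \circ \Sh(i) \circ \Sh(\phi)$ is an equivalence because $\Sh(\phi)$ and $\Sh(\psi)$ are equivalences, with pseudo-inverses $\Sh(\phi^{-1})$ and $\Sh(\psi^{-1})$. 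Hence $\psi \circ i \circ \phi \in \weqv$.

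I do not anticipate a genuine obstacle here: the argument is purely formal once \cref{classification-of-weqv} is in hand. The two points needing a moment's care are that $\Sh$ preserves composition only up to a coherent invertible $2$-cell — so one invokes bi-functoriality rather than strict functoriality — and that the property ``inclusion of a subgroupoid of a logical groupoid'' is stable under the operations in play, which is exactly what licenses the appeal to \cref{classification-of-weqv} at each stage.
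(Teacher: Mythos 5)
Your argument is correct and is essentially the paper's own proof: the paper likewise observes that, by \cref{classification-of-weqv}, $\weqv$ is the class of subgroupoid inclusions sent by $\Sh$ to equivalences, and that this class is formally wide (the paper merely states this in one line and defers concrete verifications to \cref{skula-dense-orbits-is-wide} and \cref{thick-is-wide}). One small caveat: the paper reads ``closed under continuous isomorphisms'' as closure under continuous natural isomorphisms $a \colon i \Rightarrow j$ between parallel inclusions (a 2-cell condition), rather than composition with invertible continuous functors as you do, but that version follows by the same formal argument since $\Sh$ sends continuous transformations to natural isomorphisms and equivalences of topoi are stable under natural isomorphism.
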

	\begin{proof}
		This is a direct consequence of \cref{classification-of-weqv}, since the class of inclusions of topological subgroupoids that is sent by $\Sh$ to equivalences of topoi must evidently be wide.  A more concrete demonstration of this fact is contained in \cref{skula-dense-orbits-is-wide} and \cref{thick-is-wide} below.
	\end{proof}

	The remainder of this section is devoted to proving \cref{classification-of-weqv} and studying its consequences.  The proof of \cref{classification-of-weqv} is divided into two steps:
	\begin{enumerate}[label =  (\arabic*)]
		\item First, we demonstrate in \cref{incl-of-grpd-induces-localic} that the inclusion of a subgroupoid $i \colon \Y \hookrightarrow \X$ of a logical groupoid yields a \emph{relational extension}, also called a \emph{localic extension} in \cite[Definition 7.1.1]{TST}, of any geometric theory classified by $\Sh(\X)$, and so $\Sh(i)$ is a \emph{localic} geometric morphism.  This extends the analogous classical observation in model theory for topological automorphism groups (see \cite[\S 4]{hodges}). 
		
		\item Since $\Sh(i)$ is localic, the problem of classifying when $\Sh(i)$ is an equivalence reduces to checking for an isomorphism on the level of subobject lattices.  By considering subobjects of the \emph{Moerdijk generators} for the topos $\Sh(\X)$, identified in \cite[\S 6]{cont1}, we derive the statement of \cref{classification-of-weqv}.  We will also show that, in the special case where the codomain is a logical group, the conditions of \cref{classification-of-weqv} simplify and the weak equivalences whose codomain is a logical group are precisely the inclusions of dense subgroups (\cref{groups-thick-becomes-dense}).
	\end{enumerate}
	The remainder of this section then proceeds as follows:
	\begin{enumerate}[label =  (\arabic*)]
		\setcounter{enumi}{2}
		\item We next identify two weakenings of \cref{classification-of-weqv} -- instead of asking when $\Sh(i)$ is an equivalence, we study when $\Sh(i)$ is a \emph{surjective} geometric morphism (\cref{surjection-iff-skula-dense-orbits}) and when it is a \emph{subtopos inclusion} (\cref{subtopos-iff-thick}).  Combining both will give an alternative, equivalent description of the weak equivalences $\weqv$ (\cref{weqv-skula-dense-orbits-and-thick}).  We also recover in \cref{full-replete-yields-subtopos} the primary result of \cite{forssell-subgroupoid} that the inclusion of a full \emph{replete} subgroupoid yields a subtopos.
		
		\item Using \cref{surjection-iff-skula-dense-orbits} and \cref{full-replete-yields-subtopos}, we then prove in \cref{surj-incl-fact-is-full-ess-image} that, for a continuous functor $\Phi \colon \X \to \Y$ between logical groupoids, the surjection-inclusion factorisation of $\Sh(\Phi)$ is induced by the \emph{full essential image} factorisation of $\Phi$.
		
		\item Finally, in \cref{geo_is_hom_of_grpd} we demonstrate that, given two logical groupoids $\X, \Y$, every geometric morphism $\Sh(\X) \to \Sh(\Y)$ is induced by a cospan of continuous functors 
		\[
		\begin{tikzcd}
			\X \ar{r}{\Phi} & \W & \ar[hook']{l}[']{\Psi} \Y,
		\end{tikzcd}
		\]
		where $\Psi$ is a weak equivalence.
	\end{enumerate}

	\subsection{Relational extensions}

	The first step in the proof of \cref{classification-of-weqv} is to demonstrate that inclusions of subgroupoids of logical groupoids induce \emph{localic} geometric morphisms.  Our proof is reminiscent of the classical model-theoretic argument that subgroups of the automorphism group of a structure induce relational extensions, as can be found in \cite[Theorem 4.14]{hodges}.  Namely, let $X$ be a set and let $\Omega(X)$ denote the topological permutation group of $X$.  Each subgroup $G \subseteq \Omega(X)$ induces a relational structure $\Sigma$ on the set $X$, and $G \cong \Aut_\Sigma(X)$ if and only if $G \subseteq \Omega(X)$ is closed.  Moreover, given subgroup inclusions $H \subseteq G \subseteq \Omega(X)$, the relational structure induced by $H$ on $X$ is an expansion of the structure induced by $G$.  Thus, the fact that subgroupoids yield localic geometric morphisms is not too surprising, once we recall that relational extensions of theories induce localic geometric morphisms between their classifying topoi \cite[\S 7]{TST}.
	
	\begin{df}
		Recall that a geometric morphism $f \colon \ftopos \to \topos$ is called \emph{localic} if the class of objects $\lrset{U \in \ftopos}{U \hookrightarrow f^\ast E \text{ for some } E \in \topos}$ generates the topos $\topos$.
	\end{df}

	\begin{prop}\label{incl-of-grpd-induces-localic}
		Let $\X$ be a logical groupoid.  The geometric morphism $\Sh(i) \colon \Sh(\Y) \to \Sh(\X)$ induced by an inclusion $i \colon \Y \hookrightarrow \X$ of a subgroupoid is localic.
	\end{prop}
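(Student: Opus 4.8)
The plan is to exhibit $\Sh(i)$, up to equivalence, as the geometric morphism induced by a \emph{relational extension} of theories, and then to appeal to the fact that relational extensions induce localic morphisms \cite[\S 7]{TST}. First I would fix a convenient model-theoretic presentation of $\X$: by \cref{class-of-loggrpd} and \cref{class-repr-grpds-for-theory} (together with the inhabitedness device used in the proof of \cref{class-of-loggrpd}), we may assume $\X$ is a conservative, parameter-eliminating groupoid of $\Index$-indexed set-based models of an inhabited geometric theory $\theory$ over a signature $\Sigma$, carrying its logical topologies, so that $\Sh(\X) \simeq \topos_\theory$ with $\classv{\phi}{x}_\X$ corresponding to $\form{\phi}{x}_\theory$. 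Being a subgroupoid, $\Y$ is again a groupoid of $\Index$-indexed $\Sigma$-structures (inheriting openness and the $T_0$ axiom), but it need not eliminate parameters.

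Next I would enlarge the signature to repair this. Set $\Sigma' = \Sigma \cup \lrset{R_{\vec m}}{\vec m \text{ a finite tuple from }\Index}$, where $R_{\vec m}$ is a relation symbol of the same sorts as $\vec m$, interpreted in each $N \in Y_0$ as the $Y_1$-orbit $R_{\vec m}^N = \lrset{\vec n \in N}{\lrangle{\vec n, N} \in \overline{\class{\vec x = \vec m}}_\Y}$. Since this orbit is $Y_1$-stable, the arrows of $\Y$ are automatically $\Sigma'$-isomorphisms, so $\Y$ becomes a groupoid of $\Index$-indexed $\Sigma'$-structures whose $\Sigma$-reducts are the original $\theory$-models; taking $\theory'$ to be a geometric theory over $\Sigma'$ axiomatising the structures in $Y_0$ then presents $\theory'$ as a relational extension of $\theory$. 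This groupoid of $\theory'$-models is conservative (essentially by the choice of $\theory'$) and eliminates parameters, since the orbit of $\vec m$ is now cut out by the parameter-free formula $R_{\vec m}(\vec x)$. The crucial point is that the logical topologies of this $\Sigma'$-presentation agree with the subspace topologies inherited from $\X$: every new basic open, built using some $R_{\vec m}$, is already open for the $\Sigma$-logical topology, because $\class{\vec x = \vec m}_\Y$ is a basic open of the sheaf $\classv{\top}{x}_\Y$ and orbits under the \emph{open} groupoid $\Y$ are open. Granting this, \cref{class-repr-grpds-for-theory} yields $\Sh(\Y) \simeq \topos_{\theory'}$, with $\classv{\psi}{x}_\Y$ corresponding to $\form{\psi}{x}_{\theory'}$.

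Finally, since $\theory'$ is a relational extension of $\theory$, the induced geometric morphism $\topos_{\theory'} \to \topos_\theory$ is localic \cite[\S 7]{TST}, and it coincides with $\Sh(i)$ because both send each generator $\form{\phi}{x}_\theory$ (with $\phi$ over $\Sigma$) to $\classv{\phi}{x}_\Y = \Sh(i)^\ast\classv{\phi}{x}_\X$, and these objects generate $\Sh(\X)$. Equivalently and more directly: each generator $\classv{\psi}{x}_\Y$ of $\Sh(\Y)$ is an open, hence monomorphic, subsheaf of $\classv{\top}{x}_\Y$, and $\classv{\top}{x}_\Y = \Sh(i)^\ast\classv{\top}{x}_\X$ because $Y_0 \hookrightarrow X_0$ is a subspace inclusion, so $\Sh(\Y)$ is generated by subobjects of objects in the image of $\Sh(i)^\ast$. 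The main obstacle is the topological bookkeeping in the middle step: one must check that adjoining the orbit-predicates $R_{\vec m}$ does not refine the topology on $\Y$, so that the new $\Sigma'$-presentation is genuinely compatible with the given topological groupoid. This is precisely where openness of $\X$ is used, and it is the topological analogue of the classical fact that a subgroup of an automorphism group induces a relational expansion \cite[\S 4]{hodges}.
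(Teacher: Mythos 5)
Your proposal is correct and follows essentially the same route as the paper's proof: present $\X$ as a conservative, parameter-eliminating groupoid of indexed $\theory$-models, adjoin orbit relations $R_{\vec m}$ to exhibit $\Y$ as a groupoid of models of a relational extension $\theory'$, invoke the fact that relational extensions induce localic geometric morphisms, and identify the resulting morphism with $\Sh(i)$ on the definable generators. Your explicit check that adjoining the $R_{\vec m}$ does not refine the logical topologies (via openness of $\Y$) is a point the paper leaves as ``easily verified,'' and your closing direct argument via open subsheaves of $\classv{\top}{x}_\Y = \Sh(i)^\ast\classv{\top}{x}_\X$ is a sound shortcut.
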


	\begin{ex}
		Let $G \subseteq \Omega(X)$ be a subgroup of the topological permutation group on a set $X$ and let $i \colon \B G \to \B \Omega(X)$ be the geometric morphism whose inverse image sends an $\Omega(X)$-set to the same set with the restricted $G$-action.  \cref{incl-of-grpd-induces-localic} asserts that this geometric morphism is localic, however it is instructive to observe this fact without the machinery of \cref{incl-of-grpd-induces-localic}.
		
		It suffices to show that the generators of $\B G$ are subobjects of objects living in the essential image of $i^\ast$.  The generators of $\B G$ can be taken to be the set of cosets $G / K$ (with the obvious action) for $K$ a basic open subgroup of $G$ (see \cite[\S III.9]{SGL} or \cref{ex:moerdijk_generators}), i.e.\ $K = G \cap \mathrm{Stab}(\vec{x})$ where $\mathrm{Stab}(\vec{x}) \subseteq \Omega(X)$ is the stabiliser of some finite tuple $\vec{x} \subseteq X$, say, of length $n$.  We then have that
		\[
		G/K \cong \lrset{g \cdot \vec{x}}{g \in G}  \subseteq X^n,
		\]
		where the isomorphism and inclusion are both equivariant under the obvious $G$-actions.  The isomorphism $G / K \cong \lrset{g \cdot \vec{x}}{g \in G}$ is witnessed by sending $g \cdot \vec{x}$ to $g K$.  Since $X^n$ is evidently in the image of $i^\ast$, we conclude that $i \colon \B G \to \B \Omega(X)$ is localic.  In a similarly direct manner, for a pair of inclusions of topological subgroups $H \subseteq G \subseteq \Omega(X)$, we can show that the geometric morphism induced by restricting actions $\B H \to \B G$ is localic.
		
		However, the assumption that $G, H$ are both topological subgroups of a permutation group $\Omega(X)$, i.e.\ logical groups, is necessary.  Consider the group $(\Q,+)$ of rationals under addition with the Euclidean topology, and the subgroup $(\Z,+) \subseteq (\Q,+)$.  As $\Q$ has no non-trivial open subgroups, there are no non-trivial continuous actions by $\Q$ on discrete sets.  In other words, $\B \Q \simeq \sets$.  But it is easily observed that $\B \Z$ is not localic over $\sets$.
	\end{ex}
	
	\begin{proof}[Proof of \cref{incl-of-grpd-induces-localic}]
		We provide a logical proof.  Let $\theory$ be a geometric theory over a signature $\Sigma$ classified by $\Sh(\X)$.  By \cref{class-repr-grpds-for-theory}, $\X$ can be identified with a groupoid of set-based models for $\theory$, indexed by a set of parameters $\Index$, such that:
		\begin{enumerate}
			\item $X_0$ is a conservative set of models for $\theory$,
			\item the groupoid $\X$ eliminates parameters for the $\Index$-indexing.
		\end{enumerate}
		As $\X$ is a logical groupoid, the topologies on $\X$ are the induced logical topologies.
		
		As a topological subgroupoid $\Y \subseteq \X$, the groupoid $\Y$ can also be identified with a groupoid of $\theory$-models indexed by $\Index$.  It is now relatively easy, using the techniques of \cite[\S 8.5]{myselfgrpds}, to identify a geometric theory represented by the subgroupoid $\Y \subseteq \X$.  For each tuple of parameters $\vec{m} \in \Index$, if the orbit of its interpretation in the groupoid $\Y$, i.e.\ the set
		\[
		\overline{\class{\vec{x} = \vec{m}}}_\Y = \lrset{\lrangle{\vec{n},M}}{\exists N \xrightarrow{\alpha}M \in Y_1 \text{ where } \alpha^{-1}(\vec{n}) = \vec{m}},
		\]
		is not already definable without parameters, i.e.\ of the form $\classv{\phi}{x}_\Y$, we add a relation symbol $R_{\vec{m}}$ to $\Sigma$ whose interpretation in the model $M$ is the subset 
		\[R_{\vec{m}}^M = \lrset{\vec{n} \in M}{\lrangle{\vec{n},M} \in \overline{\class{\vec{x}=\vec{m}}}_\Y}.\]
		For this interpretation, it is easily verified that $\Y$ is a groupoid of $\Index$-indexed structures for the expanded signature $\Sigma'$.  
		
		We take $\theory'$ to be the geometric theory over $\Sigma'$ of the geometric sequents satisfied in all the constituent $\Sigma'$-structures in $\Y$.  By construction, $Y_0$ is a conservative set of models for $\theory'$.  Moreover, $\Y$ clearly eliminates parameters over the signature $\Sigma$ since $\overline{\class{\vec{x} = \vec{m}}}_\Y = \classv{R_{\vec{m}}}{x}_\Y$.

		Since $\Sigma'$ expands $\Sigma$ without adding any new sorts, and $\theory' \supseteq \theory$ as the $\Sigma$-reduct of each $M \in Y_0$ is a $\theory$-model, we have that $\theory'$ is a relational expansion (called a \emph{localic} expansion in \cite[Definition 7.11]{TST}) of $\theory$.  Therefore, by \cite[Theorem 7.1.3]{TST}, the induced geometric morphism $e^{\theory'}_\theory  \colon \topos_{\theory'} \to \topos_\theory $ is {localic}, where $e^{\theory'}_\theory$ is the geometric morphism that sends a $\theory'$-model/homomorphism to its $\theory$-reduct.  
		
		It remains to show that the geometric morphisms $e^{\theory'}_\theory$ and $\Sh(i)$ are naturally isomorphic.  It suffices to show that inverse images agree on the {syntactic category} $\cat_\theory \subseteq \topos_\theory$.  Recall that ${e^{\theory'}_\theory}^\ast$ sends the object $\form{\phi}{x}_\theory \in \cat_\theory$ to $\form{\phi}{x}_{\theory'} \in \cat_{\theory'} \subseteq \topos_{\theory'}$.  Under the equivalence $\topos_\theory \simeq \Sh(\X)$, $\form{\phi}{x}_\theory$ corresponds to the definable sheaf $\pi_{\classv{\phi}{x}} \colon \classv{\phi}{x}_\X \to X_0$, whose pullback along the inclusion $i \colon \Y \to \X$ is $\pi_{\classv{\phi}{x}} \colon \classv{\phi}{x}_\Y \to Y_0$, which corresponds to $\form{\phi}{x}_{\theory'}$ under the equivalence $\topos_{\theory'} \simeq \Sh(\Y)$.  This witnesses the natural isomorphism $\Sh(i) \cong e^{\theory'}_\theory$, and so $\Sh(i)$ is localic as desired.
	\end{proof}

	\paragraph{When localic geometric morphisms are equivalences.}

	Therefore, recall from \cite[\S A4.6]{elephant} that the induced geometric morphism $\Sh(i) \colon \Sh(\Y) \to \Sh(\X)$ is an equivalence if and only if it is \emph{hyperconnected} as well as localic.  Hence, by \cite[Proposition A4.6.6]{elephant}, we have that:
	
	\begin{coro}\label{coro:weqv_iff_hyp}
		The inclusion $i \colon \Y \hookrightarrow \X$ of a subgroupoid of a logical groupoid is a weak equivalence if and only if, for each object $W \in \Sh(\X)$, or indeed each object $W $ in a generating set of objects for $\Sh(\X)$, the induced map on subobjects
		\[
		\Sh(i)_W^\ast \colon \Sub_{\Sh(\X)}(W) \to \Sub_{\Sh(\Y)}(\Sh(i)^\ast W)
		\]
		is an isomorphism.
	\end{coro}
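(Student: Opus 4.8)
The plan is to obtain the corollary as a formal consequence of \cref{incl-of-grpd-induces-localic} together with the standard facts about hyperconnected morphisms recalled from \cite[\S A4.6]{elephant}, essentially as the remark preceding the statement already indicates.

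By definition $i$ is a weak equivalence precisely when $\Sh(i)$ is an equivalence of topoi, and $\Sh(i)$ is localic by \cref{incl-of-grpd-induces-localic}. Since the hyperconnected--localic factorisation of a geometric morphism is essentially unique, a localic morphism is an equivalence if and only if it is also hyperconnected (its hyperconnected factor, which is then necessarily an equivalence, must be the whole morphism). So the task reduces to characterising when $\Sh(i)$ is hyperconnected, and here I would invoke \cite[Proposition A4.6.6]{elephant}: a geometric morphism $f \colon \ftopos \to \topos$ is hyperconnected exactly when $f^\ast$ is full and faithful with essential image closed under subobjects, equivalently exactly when the comparison map $\Sub_{\topos}(E) \to \Sub_{\ftopos}(f^\ast E)$ is an isomorphism for every $E \in \topos$. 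Specialising to $f = \Sh(i)$ yields the ``for each object $W$'' form of the statement.

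It then remains to see that it suffices to test the comparison maps on a generating family $\mathcal{G}$ of $\Sh(\X)$; the forward implication being trivial, suppose $\Sh(i)_G^\ast$ is an isomorphism for every $G \in \mathcal{G}$. Injectivity on $\mathcal{G}$ forces $\Sh(i)$ to be a surjection: factoring $\Sh(i)$ as a surjection followed by a subtopos inclusion $\mathrm{sh}_j(\Sh(\X)) \hookrightarrow \Sh(\X)$, the composite $\Sub_{\Sh(\X)}(G) \to \Sub_{\Sh(\Y)}(\Sh(i)^\ast G)$ factors through the $j$-closure map $\Sub_{\Sh(\X)}(G) \to \Sub_j(G)$, so the latter is injective, i.e.\ every subobject of $G$ is $j$-closed; as $j$-closure is stable under pullback along a cover of an arbitrary object by members of $\mathcal{G}$, every subobject of every object is $j$-closed, $j$ is trivial, and $\Sh(i)$ is a surjection. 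A surjection has conservative inverse image, so the comparison map is injective on \emph{every} object; and surjectivity of the comparison propagates from $\mathcal{G}$ to all objects because $\Sh(i)^\ast$ is exact and preserves coproducts, so a subobject of $\Sh(i)^\ast W$ can be reassembled, via a cover of $W$ by objects of $\mathcal{G}$, from subobjects of the $\Sh(i)^\ast G$. Hence the comparison is an isomorphism on every object, and by the previous paragraph $\Sh(i)$ is hyperconnected, so an equivalence.

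The single substantive input is \cref{incl-of-grpd-induces-localic}; everything else is bookkeeping with the cited results, so I do not anticipate a genuine obstacle. The one point deserving some care is the reduction from a generating family to all objects --- but, as sketched, this goes through for an arbitrary geometric morphism and relies only on exactness of the inverse image functor together with the surjection--inclusion factorisation.
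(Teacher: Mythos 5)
Your proposal is correct and follows essentially the same route as the paper: \cref{incl-of-grpd-induces-localic} makes $\Sh(i)$ localic, the hyperconnected--localic factorisation reduces the question to hyperconnectedness, and \cite[Proposition A4.6.6]{elephant} converts that into the subobject-comparison condition. The paper treats the whole corollary as immediate from these citations and leaves the reduction to a generating family implicit; your explicit argument for that reduction (injectivity on generators forcing surjectivity of $\Sh(i)$, and surjectivity of the comparison propagating along covers by generators using exactness of $\Sh(i)^\ast$) is sound and fills in exactly the routine step the paper omits.
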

	
	Different choices of generating set will yield different, but equivalent conditions on when $i$ is a weak equivalence.
	
	\begin{ex}\label{ex:Morita-equivalent-subgroupoids-when-theory}
		Let $\theory$ be a geometric theory and suppose that $\X$ is a groupoid of $\theory$-models with an indexing by a set of parameters $\Index$ such that:
		\begin{enumerate}
			\item $X_0$ is a conservative set of models for $\theory$.
			\item the groupoid $\X$ eliminates the indexing parameters,
		\end{enumerate}
		and the topologies on $\X$ are the logical topologies for this indexing.  Thus, by \cref{class-repr-grpds-for-theory}, the topos $\Sh(\X)$ classifies $\theory$.  If $i \colon \Y \hookrightarrow \X$ is a topological subgroupoid, $\Y$ is also a groupoid of $\theory$-models equipped with an indexing by $\Index$.  For a definable sheaf $\classv{\phi}{x}_\X$ from \cref{ex:definable_sheaves}, the pullback $\Sh(i)^\ast \classv{\phi}{x}_\X$ is given by $\classv{\phi}{x}_\Y$.  Moreover, the induced map on subobjects
		\[
		\Sh(i)_{\classv{\phi}{x}_\X}^\ast \colon \Sub_{\Sh(\X)}(\classv{\phi}{x}_\X) \to \Sub_{\Sh(\Y)}(\classv{\phi}{x}_\Y)
		\]
		is an isomorphism, for each formula $\phi$, if and only if $\Y$ is also a conservative groupoid of models for $\theory$ and eliminates parameters for the given indexing, i.e.\ $\Sh(\Y)$ also classifies the theory $\theory$ (as we would expect if there were an equivalence $\Sh(\Y) \simeq \Sh(\X)$).
	\end{ex}
	
	\subsection{The Moerdijk generators}
	
	In \cref{ex:Morita-equivalent-subgroupoids-when-theory}, we have, in effect, chosen a generating set of objects for $\Sh(\X)$ -- the definable sheaves.  We instead desire an entirely topological description of the weak equivalences $\weqv$, in other words an entirely topologically defined generating set of objects for the topos $\Sh(\X)$, devoid of any mention of a geometric theory that it classifies.  Just such a generating set was identified by Moerdijk in \cite[\S 6.1]{cont1}.

	\begin{df} Let $\X$ be an open topological groupoid.  Given an open subgroupoid $\U \subseteq \X$, we obtain an $\X$-sheaf as follows.
			\begin{enumerate}
				\item The underlying local homeomorphism is the continuous map
				\[
				\bar{t} \colon {[s^{-1}(U_0)]_\U} \to X_0,
				\]
				the map that sends the $U_1$-orbit (by pre-composition) $\left[x \xrightarrow{\alpha} y\right]_\U$, where $x \in U_0$, to $y \in X_0$.
				\item The $X_1$-action on $[s^{-1}(U_0)]_\U$ is given by (post-)composition, i.e.\ $\beta \cdot [\alpha] = [\beta \circ \alpha]$.
			\end{enumerate}
			We denote this object by $\lrangle{\U}$ (details that $\bar{t}$ is a local homeomorphism, or that $[- \circ -]$ defines an $X_1$-action, can be found in \cite{cont1}).
	\end{df}
	\begin{prop}[\S 6.1 \cite{cont1}]
		For an open topological groupoid $\X$, the objects of the form $\lrangle{\U}$, with $\U \subseteq \X$ an open subgroupoid, yield a generating set for the topos $\Sh(\X)$.
	\end{prop}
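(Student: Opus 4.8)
The plan is to prove this by the most hands-on route available: given an arbitrary sheaf $(Y,q,\beta)\in\Sh(\X)$, I would exhibit a (small) jointly epimorphic family of morphisms into it whose domains are all of the form $\lrangle{\U}$. Since $q\colon Y\to X_0$ is a local homeomorphism, the open sets $V\subseteq Y$ on which $q$ restricts to a homeomorphism onto an open subset $U_0:=q(V)$ of $X_0$ cover $Y$; write $\sigma_V\colon U_0\to Y$ for the resulting continuous section of $q$. Everything will be built out of such a $V$.

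The first step --- and the one I expect to be the genuine obstacle --- is to produce from each $V$ an \emph{open} subgroupoid $\U_V=(U_1\rightrightarrows U_0)$ of $\X$. The natural choice is to let $U_1$ consist of those arrows $\gamma\in X_1$ with $s(\gamma),t(\gamma)\in U_0$ that ``fix the section'', i.e.\ $\beta(\gamma,\sigma_V(s(\gamma)))=\sigma_V(t(\gamma))$. That $U_1$ contains the identities on $U_0$ and is closed under composition and inverses is immediate from the action axioms for a sheaf, so $\U_V$ really is a subgroupoid (with object space $s(U_1)=t(U_1)=U_0$). Openness of $U_1$ is the crux: I would write $U_1=(a,b)^{-1}(\Delta_Y)$ for the two continuous maps $a,b\colon s^{-1}(U_0)\cap t^{-1}(U_0)\to Y$ given by $\gamma\mapsto\beta(\gamma,\sigma_V(s(\gamma)))$ and $\gamma\mapsto\sigma_V(t(\gamma))$ --- which land in a common $q$-fibre, so $(a,b)$ factors through $Y\times_{X_0}Y$ --- and then invoke the standard fact that a local homeomorphism has open diagonal, so that $\Delta_Y\subseteq Y\times_{X_0}Y$ is open and hence so is $U_1$.

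With $\U_V$ in hand I would define $f_V\colon\lrangle{\U_V}\to(Y,q,\beta)$ by sending the $U_1$-orbit (under precomposition) of an arrow $\alpha$ with $s(\alpha)\in U_0$ to $\beta(\alpha,\sigma_V(s(\alpha)))\in Y$. The defining property of $U_1$ is exactly what makes this well defined on $U_1$-orbits; it is continuous because $[s^{-1}(U_0)]_{\U_V}$ carries the quotient topology and $\alpha\mapsto\beta(\alpha,\sigma_V(s(\alpha)))$ is continuous on $s^{-1}(U_0)$; it lies over $X_0$ since $q(\beta(\alpha,-))=t(\alpha)=\bar{t}([\alpha]_{\U_V})$; and it is $X_1$-equivariant because $\beta$ is an action. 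Crucially, its image contains $V$: evaluating at the orbit of the identity $e(x)$ yields $\beta(e(x),\sigma_V(x))=\sigma_V(x)$ for each $x\in U_0$, and these are precisely the points of $V$.

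Since the chosen $V$'s cover $Y$, the family $\{f_V\}_V$ is jointly surjective on underlying spaces, hence jointly epimorphic in $\Sh(X_0)$. Applying the forgetful functor $u_\X^\ast\colon\Sh(\X)\to\Sh(X_0)$ --- which is faithful (this is the content of $u_\X$ being a surjection, \cref{u_X-is-surj}, but is also evident since a morphism of $\X$-sheaves is determined by its underlying continuous map) --- and using that faithful functors reflect jointly epimorphic families, $\{f_V\}_V$ is already jointly epimorphic in $\Sh(\X)$. As the open subgroupoids of $\X$ form a set (each is determined by a subset of $X_1$), this realises $\{\lrangle{\U}\}$ as a small separating set of objects for $\Sh(\X)$, which is the claim. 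To summarise the difficulty: the entire argument is formal manipulation of the action $\beta$, save for the single point that $U_1$ is open, which I would isolate and reduce to open-diagonality of local homeomorphisms.
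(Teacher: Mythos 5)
Your proof is correct: the construction of the open subgroupoid $\U_V$ stabilising a local section $\sigma_V$, the verification of its openness via the open diagonal of the local homeomorphism $q$, and the jointly surjective maps $f_V\colon\lrangle{\U_V}\to(Y,q,\beta)$ together constitute a complete argument, and the one genuinely non-formal point (openness of $U_1$) is correctly isolated and correctly resolved. The paper itself offers no proof, deferring to \cite[\S 6.1]{cont1}, and your argument is essentially the standard one found there, so there is nothing further to reconcile.
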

	\begin{ex}\label{ex:moerdijk_generators}
			

			 Let $G$ be a topological group.  The open subgroupoids of $G$ are the open subgroups $H \subseteq G$, and $\lrangle{H}$ is the set of $H$-cosets $ G/H$ with the obvious (and continuous) $G$-action.  Thus, the Moerdijk generators coincide with the generators for $\B G$ identified in \cite[\S III.9]{SGL}.
	\end{ex}
	
	\begin{ex}[cf.\ Lemma 2.1.5 \& Corollary 2.1.6 \cite{awodeyforssell}]
		We now relate the Moerdijk generators with definable sheaves from \cref{ex:definable_sheaves}.  Let $\X$ be a groupoid of models for a geometric theory $\theory$.  Suppose that $\X$ admits an indexing by a parameter set $\Index$ such that, for a tuple of parameters $\vec{m} \in \Index$,
		\[
		\overline{\class{\vec{x}=\vec{m}}}_\X = \classv{\chi_{\vec{m}}}{x}_\X,
		\]
		i.e.\ the formula $\chi_{\vec{m}}$ eliminates the parameters $\vec{m}$ for the groupoid $\X$.  We claim that there is a homeomorphism of sheaves
		\[
		\lrangle{
		\X^{\vec{m}}
		}
		\cong \overline{\class{\vec{x}=\vec{m}}}_\X = \classv{\chi_{\vec{m}}}{x}_\X,
		\]
		where $\X^{\vec{m}} \subseteq \X$ is the open subgroupoid of all models $M \in X_0$ that interpret the tuple $\vec{m}$ and the isomorphisms $M \xrightarrow{\alpha} M' \in X_1$ that preserve the interpretation of these parameters, that is to say, the open subgroupoid corresponding to the open subset
		\[
		\lrclass{
			\begin{matrix}
				\vec{m} : \top \\
				\vec{m} \mapsto \vec{m} \\
				\vec{m} : \top
			\end{matrix}
		}_\X \subseteq X_1.
		\]
		
		To construct the map $\lrangle{\X^{\vec{m}}} \to \overline{\class{\vec{x} = \vec{m}}}_\X$ witnessing the isomorphism, given an arrow $M \xrightarrow{\alpha} N$ with $M \in X_0^{\vec{m}}$, i.e.\ the model $M$ interprets the parameters $\vec{m}$, we send $[\alpha]_{\X^{\vec{m}}} \in \lrangle{\X^{\vec{m}}}$ to the pair $\lrangle{\alpha(\vec{m}), N} \in  \overline{\class{\vec{x} = \vec{m}}}_\X$.  This is a well-defined map since, if $[\alpha]_{\X^{\vec{m}}} = [\alpha']_{\X^{\vec{m}}}$, then there is an arrow $M \xrightarrow{\gamma} M' \in X_1^{\vec{m}}$ yielding a commuting diagram
		\[
		\begin{tikzcd}[row sep=tiny]
			M \ar{dd}[']{\gamma} \ar{rd}{\alpha} & \\
			& N ,\\
			M' \ar{ru}[']{\alpha'}
		\end{tikzcd}
		\]
		and so $\alpha(\vec{m}) = \alpha'(\vec{m})$ as $\gamma$ preserves the interpretation of the parameters $\vec{m}$.  
		
		In the converse direction, we simply send $\lrangle{\vec{n},N} \in \overline{\class{\vec{x} = \vec{m}}}_\X$ to the orbit of the isomorphism $M \xrightarrow{\alpha} N \in X_1$ for which $M$ interprets $\vec{m}$ and $\alpha(\vec{m}) = \vec{n}$.  Again, this is a well defined map since given a second isomorphism $M' \xrightarrow{\alpha'} N \in X_1$ for which $\vec{n} = \alpha'(\vec{m})$, then $M' \xrightarrow{\alpha^{-1} \circ \alpha'} M$ is an isomorphism of models that preserves the interpretation of the tuple of parameters $\vec{m}$, and clearly $\alpha' = \alpha \circ \alpha^{-1} \circ \alpha'$, i.e.\ $[\alpha]_{\X^{\vec{m}}} = [\alpha']_{\X^{\vec{m}}}$.  We omit the additional details that these maps constitute a homeomorphism of sheaves.  In an identical fashion, for any geometric formula $\phi$, we can demonstrate a homeomorphism of sheaves $\classv{\chi_{\vec{m}} \land \phi}{x}_\X \cong \lrangle{\X^{\phi(\vec{m})}}$, where $\X^{\phi(\vec{m})} \subseteq \X$ is the open subgroupoid corresponding to the open subset
		\[
		\lrclass{
			\begin{matrix}
				\vec{m} : \phi \\
				\vec{m} \mapsto \vec{m} \\
				\vec{m} : \phi
			\end{matrix}
		}_\X \subseteq X_1.
		\]

		Suppose that $\X$ is conservative and eliminates all parameters.  Then $\Sh(\X)$ classifies $\theory$ by \cref{class-repr-grpds-for-theory}.  Recall that, in this case, the definable sheaves $\classv{\phi}{x}_\X$ generate the topos $\Sh(\X)$, and observe as well that $\classv{\phi}{x}_\X = \bigcup_{\vec{m} \in \Index} \classv{\chi_{\vec{m}} \land \phi}{x}_\X$ (since every tuple in each model $M \in X_0$ must be the interpretation of some tuple of parameters).  Note also that the open subgroupoids of the form $\X^{\phi(\vec{m})}$ yield a \emph{basis of open subgroupoids for} $\X$ (in the terminology of Moerdijk \cite[\S 6.5]{cont1}).  Therefore, the objects
		\[
		\lrset{\classv{\chi_{\vec{m}} \land \phi }{x}_\X}{ \vec{m} \in \Index, \, \phi \text{ a formula}}
		\]
		are a generating set of objects for $\Sh(\X)$ that are contained in the intersection of the definable sheaves and the Moerdijk generators.
	\end{ex}

	\paragraph{Restricting to a subgroupoid.}
	
	We now use the Moerdijk generators to prove \cref{classification-of-weqv}.  Let $i \colon \Y \hookrightarrow \X$ be a subgroupoid inclusion.  The image $\Sh(i)^\ast \lrangle{\U}$ under the inverse image functor, i.e.\ the pullback along $i$, has $[s^{-1}(U_0) \cap t^{-1}(Y_0) ]_\U$ as its underlying space.  The $Y_1$-action is simply the restriction of the $X_1$-action.
	
	\begin{rem}
		Note that $i^{-1} \U = (U_1 \cap Y_1 \rightrightarrows U_0 \cap Y_0)$ defines an open subgroupoid of $\Y$.  However, note also that, in general, $\Sh(i)^\ast \lrangle{ \U} \not \cong \lrangle{i^{-1} \U}$.  Conditions for when there is an isomorphism between $\Sh(i)^\ast \lrangle{ \U}  $ and $ \lrangle{i^{-1} \U}$ are discussed in \cite[Lemma 2.1.2]{awodeyforssell}.
	\end{rem}
	
	\begin{proof}[Proof of \cref{classification-of-weqv}]
		By \cref{coro:weqv_iff_hyp}, the geometric morphism $\Sh(i)$ induced by an inclusion of a subgroupoid $i \colon \Y \hookrightarrow \X$ is an equivalence of topoi $\Sh(\Y) \simeq \Sh(\X)$ if and only if the induced map on subobjects is an isomorphism for some set of generators of $\Sh(\X)$.  In particular, taking the Moerdijk generators, we have that $\Sh(i)$ is an equivalence if and only if, for each open subgroupoid $\U \subseteq \X$, the map
		\[
		\Sh(i)_{\lrangle{\U}}^\ast \colon \Sub_{\Sh(\X)}(\lrangle{\U}) \to \Sub_{\Sh(\Y)}(\Sh(i)^\ast \lrangle{\U})
		\]
		is an isomorphism.  Recall that a subobject of $\lrangle{\U} \in \Sh(\X)$ is an open subset of $[s^{-1}(U_0)]_\U$ that is stable under the action of $X_1$ by post-composition, and therefore can be identified with an open subset of ${ _\X[s^{-1}(U_0)]_\U}$.  Similarly, each subobject of $\Sh(i)^\ast \lrangle{\U} \in \Sh(\Y)$ corresponds to an open subset of ${ _\Y[s^{-1}(U_0) \cap t^{-1}(Y_0)]_\U}$.  The map on subobjects $\Sh(i)_{\lrangle{\U}}^\ast$ sends a subobject $W \subseteq [s^{-1}(U_0)]_\U$ to $W \cap [t^{-1}(Y_0)]_\U \subseteq [s^{-1}(U_0) \cap t^{-1}(Y_0)]_\U$.  Composing with the isomorphisms
		\[
		\opens({ _\X[s^{-1}(U_0)]_\U}) \cong  \Sub_{\Sh(\X)}(\lrangle{\U}) \xrightarrow{\Sh(i)_{\lrangle{\U}}^\ast } \Sub_{\Sh(\Y)}(\Sh(i)^\ast \lrangle{\U}) \cong \opens({ _\Y[s^{-1}(U_0) \cap t^{-1}(Y_0)]}_\U),
		\]
		the resultant map is precisely the inverse image map $\iota^{-1} \colon \opens({ _\X[s^{-1}(U_0)]_\U}) \to \opens({ _\Y[s^{-1}(U_0) \cap t^{-1}(Y_0)]}_\U)$.  Thus, $\Sh(i)^\ast $ is an isomorphism if and only if $\iota^{-1}$ is bijective, i.e.\ if 
		\[\iota \colon { _\Y[s^{-1}(U_0) \cap t^{-1}(Y_0)]}_\U \to { _\X[s^{-1}(U_0)]_\U}\]
		is a quasi-homeomorphism.
	\end{proof}

		\paragraph{Logical groups.}
	
	We now demonstrate that, in the case of logical groups, the conditions of \cref{classification-of-weqv} simplify and align with our expectations from \cite[\S 4]{hodges}.
	
	\begin{prop}\label{groups-thick-becomes-dense}
		Let $G$ be a logical group.  The weak equivalences of topological groups with codomain $G$ are precisely the inclusions of the dense subgroups of $G$.
	\end{prop}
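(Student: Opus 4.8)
The plan is to specialise \cref{classification-of-weqv} to $\X = \BG$ and watch everything collapse to elementary statements about double cosets. First I would unwind the data. Since $\BG$ has a single object, $X_0$ is a one-point space and $X_1 = G$; a subgroupoid $\Y \hookrightarrow \BG$ is precisely $\BH$ for a subgroup $H \subseteq G$ carrying the subspace topology, and an open subgroupoid $\U \subseteq \BG$ is precisely an open subgroup $U \subseteq G$ (cf.\ \cref{ex:moerdijk_generators}). With these identifications $s^{-1}(U_0) = t^{-1}(Y_0) = X_1 = G$, so the two spaces appearing in \cref{classification-of-weqv} are the double coset spaces ${}_\Y[G]_\U = H \backslash G / U$ and ${}_\X[G]_\U = G \backslash G / U$, each carrying its quotient topology. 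Because $U$ is open in $G$, the quotient map $G \twoheadrightarrow G/U$ is open, so $G/U$ --- and hence its further quotient $H \backslash G / U$ --- is discrete; and $G \backslash G / U$ is a one-point space, since $G$ acts transitively on $G/U$ by left translation.

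Next I would determine when the comparison map $\iota \colon H \backslash G / U \to G \backslash G / U$ is a quasi-homeomorphism. Its codomain being a point, $\iota^{-1}$ is a bijection onto $\opens(H \backslash G / U)$ exactly when the discrete, non-empty space $H \backslash G / U$ is a single point, i.e.\ exactly when $G = HU$. Substituting this back into \cref{classification-of-weqv}, the inclusion $\BH \hookrightarrow \BG$ is a weak equivalence if and only if $HU = G$ for every open subgroup $U \subseteq G$.

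It then remains to recognise ``$HU = G$ for every open subgroup $U$'' as density of $H$ in $G$. Since $G$ is logical, the cosets $gU$ of open subgroups form a basis for the topology of $G$ (\cref{prop:eqv-conditions-for-log-group}), so $H$ is dense precisely when it meets every such $gU$; and for a fixed open subgroup $U$ a one-line verification gives $H \cap gU \neq \emptyset$ for all $g \in G$ iff $G = HU$. Chaining the two equivalences yields the proposition. There is no serious obstacle here beyond this last translation, and that is exactly the step where the hypothesis that $G$ is \emph{logical} --- rather than an arbitrary $T_0$ topological group --- is essential: in $(\Q,+)$, for instance, the only open subgroup is $\Q$ itself, so ``$HU = G$ for all open $U$'' holds vacuously for every subgroup $H$, while $\Z \subseteq \Q$ is not dense (cf.\ the discussion following \cref{incl-of-grpd-induces-localic}). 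This also squares with the classical picture for permutation groups in \cite[\S 4]{hodges}.
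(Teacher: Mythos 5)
Your proposal is correct and follows essentially the same route as the paper: specialise \cref{classification-of-weqv} to the one-object case, observe that ${}_G[G]_U$ is a point and ${}_H[G]_U$ is discrete (since $U$ is open), so the quasi-homeomorphism condition collapses to the double coset space being a singleton, i.e.\ $G = HU$ for every open subgroup $U$, and then translate this into density using that cosets of open subgroups form a basis. The only cosmetic difference is that the paper phrases the singleton condition as $HgU = Hg'U$ for all $g, g'$ and verifies both directions of the density equivalence by explicit element chases, whereas you package it as $G = HU$; the content is identical.
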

	\begin{proof}
		Let $H \subseteq G$ be a subgroup.  For $g \in G$, recall that the subsets $g K$, where $K$ is an open subgroup of $G$, define a basis of open neighbourhoods of $g$.  Therefore, $H$ is dense in $G$ if and only if, for all $g \in G$ and open subgroups $K \subseteq G$, we have that $g K \cap H \neq \emptyset$.
		
		Observe that, for any $g \in G$, the bi-orbit ${ _G[g]_K}$ is the whole space $G$, and thus ${ _G[G]_K}$ is the one-point space.  We also note that the space ${ _H[G]_K}$ is discrete -- this is because ${ _H[gK]_K} $ is an open neighbourhood of ${ _H[g]_K} \in { _H[G]_K}$, but it is easily calculated that ${ _H[gK]_K}$ is the singleton $\{\, { _H[g]_K} \,\}$.  Thus, the map
		\[
		\iota^{-1} \colon { _H[G]_K} \to { _G[G]_K}  \cong \{\ast\}
		\]
		is a quasi-homeomorphism if and only if ${ _H[g]_K} = { _H[g']_K} $ for all $g, g' \in G$, i.e.\ the double co-sets $HgK$ and $Hg'K$ are equal, or equivalently $g' \in HgK$.
		
		Suppose that $H \subseteq G$ is dense.  Then, for any $g, g' \in G$, since $H \cap gK, H \cap g'K \neq \emptyset$, there exists $h,h' \in H$ and $k,k' \in K$ such that $h = gk$ and $h' = g'k'$.  A simple calculation then yields that $g' = h'h^{-1}gk{k'}^{-1}$, i.e.\ $g' \in HgK$, and so $\iota^{-1} \colon { _H[G]_K} \to { _G[G]_K}  $ is a quasi-homeomorphism.  Now suppose that $\iota^{-1}$ is a quasi-homeomorphism.  For each $g \in G$, we have that the identity $e$ is contained in $HgK$, and so there exist $h \in H$ and $k \in K$ for which $h = gk$, i.e.\ $H \cap gK \neq \emptyset$.  Thus, $H \subseteq G$ is dense.
	\end{proof}

	\subsection{Localic surjections and subtopos inclusions}

	We have demonstrated that, given a subgroupoid $i \colon \Y \hookrightarrow \X$ of a logical groupoid, $\Sh(i)$ is an equivalence if and only if $\iota^{-1} \colon \opens({ _\X[s^{-1}(U_0)]_\U}) \to \opens({ _\Y[s^{-1}(U_0) \cap t^{-1}(Y_0)]}_\U)$ is bijective, for each open subgroupoid $\U \subseteq \X$.  It is of interest to split up this condition into when $\iota^{-1}$ is injective and surjective.  By the results of \cite{myselfintloc}, the former occurs to when $\Sh(i)$ is a \emph{localic surjection}, while the latter corresponds to $\Sh(i)$ being a \emph{subtopos inclusion}.
	
	\paragraph{Skula dense orbits.}  We begin by studying when $\Sh(i)$ is a surjection, i.e.\ when the inverse image part $\Sh(i)^\ast$ is faithful.  We first note that, for each open subgroupoid $\U \subseteq \X$, the space ${ _\X[s^{-1}(U_0)]_\U}$ is homeomorphic to the space of $\U$-\emph{orbits}, that is the quotient space
	\[
	\begin{tikzcd}
		U_1 \ar[shift left]{r}{s} \ar[shift right]{r}[']{t} & U_0 \ar[two heads]{r} & \pi(\U).
	\end{tikzcd}
	\]
	The homeomorphism is witnessed by sending the $\U$-orbit of an object $x \in U_0$ to ${ _\X[\id_x]_\U} \in { _\X[s^{-1}(U_0)]_\U}$ and, for the inverse, ${ _\X[\alpha]_\U} \in { _\X[s^{-1}(U_0)]_\U} $ to the $\U$-orbit of $s(\alpha)$ (cf.\ \cite[Lemma 2.1.2]{awodeyforssell}).  The image of the continuous map
	\[
	\iota \colon { _\Y[s^{-1}(U_0) \cap t^{-1}(Y_0)]_\U} \to { _\X[s^{-1}(U_0)]_\Y} \cong \pi(\U)
	\]
	can easily be described -- it is the subspace the subspace of $\pi(\U)$ of $\U$-orbits that intersect $\overline{Y_0}$ (i.e.\ the set of all objects that are isomorphic, in $\X$, to an object in $Y_0 \subseteq X_0$).  We denote this subspace by $\pi(i^\ast \U) \subseteq \pi(\U)$.
	
	\begin{df}\label{df:skula-dense}
		Let $\X$ be a logical groupoid and $\U \subseteq \X$ an open subgroupoid.  An inclusion $i \colon \Y \hookrightarrow \X$ of a second subgroupoid will be said to have \emph{Skula dense $\U$-orbits} if, given a pair of open subsets $W, W' \subseteq U_0$, whenever $W' \cap \overline{Y_0} \subseteq W \cap \overline{Y_0}$, then for every $x \in W'$ there is some arrow $x \xrightarrow{\alpha} z \in U_1$ with $z \in W$.  Our terminology is justified by the following lemma.
	\end{df}
	
	\begin{fact}
		Recall that the \emph{Skula topology} on a space $X$ is the topology generated by both the open and closed subsets of $X$.  A subset $A \subseteq X$ is said to be \emph{Skula dense} if it is dense for the Skula topology.  Recall from \cite[Proposition IV.10.1.2]{EGAIV} that a subspace $A \subseteq X$ is Skula dense if and only if the inclusion $A \subseteq X$ is a quasi-homeomorphism.
	\end{fact}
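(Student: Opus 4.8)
The plan is to reduce the claimed equivalence to an elementary statement about the poset map $\iota^{-1} \colon \opens(X) \to \opens(A)$, $U \mapsto U \cap A$, where $\iota \colon A \hookrightarrow X$ is the (trivially continuous) inclusion. Since $A$ carries the subspace topology, every open subset of $A$ is of the form $U \cap A$, so $\iota^{-1}$ is automatically surjective; hence $\iota$ is a quasi-homeomorphism precisely when $\iota^{-1}$ is \emph{injective}, i.e.\ when $U \cap A = V \cap A$ forces $U = V$ for all open $U, V \subseteq X$. So the whole statement becomes: $A$ is Skula dense if and only if $\iota^{-1}$ is injective.

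Next I would fix a convenient basis for the Skula topology on $X$. As it is generated by the open subsets together with the closed subsets, its basic opens are the finite intersections of such, which are exactly the sets $U \setminus V = U \cap (X \setminus V)$ with $U, V \in \opens(X)$; note that $U \setminus V \neq \emptyset$ iff $U \not\subseteq V$. Since a subspace is dense exactly when it meets every nonempty basic open, $A$ is Skula dense if and only if $A \cap (U \setminus V) \neq \emptyset$ whenever $U \not\subseteq V$.

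It then remains to verify the two implications, each of which is a one-line argument in the lattice of opens. For the forward direction, assume $A$ is Skula dense and $U \cap A = V \cap A$; if $U \not\subseteq V$ then $U \setminus V$ is a nonempty Skula-open set, hence contains some $a \in A$, but then $a \in U \cap A = V \cap A \subseteq V$, a contradiction, so $U \subseteq V$, and by symmetry $U = V$. For the converse, assume $\iota^{-1}$ is injective and suppose for contradiction that some nonempty $U \setminus V$ is disjoint from $A$, i.e.\ $U \cap A \subseteq V \cap A$; then $(U \cup V) \cap A = V \cap A$ while $U \cup V \supsetneq V$ (as $U \not\subseteq V$), contradicting injectivity. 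I do not anticipate a genuine obstacle: the only point requiring a little care is identifying the correct basis for the Skula topology and invoking the standard fact that density may be tested against basic opens, after which both directions reduce to the elementary observation that $U \mapsto U \cap A$ fails to be injective exactly when some $U \not\subseteq V$ has $U \cap A \subseteq V \cap A$.
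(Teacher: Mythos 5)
Your argument is correct. The paper itself offers no proof of this fact --- it is stated as a recollection with a citation to \cite[Proposition IV.10.1.2]{EGAIV} --- so there is no in-paper argument to compare against; what you have supplied is a complete, self-contained elementary verification of the cited result, and it is the standard one. The two reductions you make are both sound: since $A$ carries the subspace topology, $\iota^{-1}\colon \opens(X) \to \opens(A)$ is automatically surjective, so quasi-homeomorphism reduces to injectivity of $U \mapsto U \cap A$; and the sets $U \setminus V$ with $U,V$ open do form a basis for the Skula topology (finite intersections of the generating opens and closeds collapse to a single open intersected with a single closed), so density may be tested against them. Both implications then go through exactly as you write them: in the forward direction a point of $A$ in $U \setminus V$ contradicts $U \cap A = V \cap A$, and in the converse direction the pair $U \cup V$ and $V$ witnesses the failure of injectivity when $U \setminus V$ is a nonempty basic open missing $A$. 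The only stylistic remark is that your forward direction implicitly uses ``without loss of generality $U \not\subseteq V$'' when $U \neq V$, which is harmless since the argument is symmetric in $U$ and $V$.
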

	
	\begin{lem}
		Let $i \colon \Y \hookrightarrow \X$ be the inclusion of a subgroupoid and let $\U \subseteq \X$ be an open subgroupoid.  Then \cref{df:skula-dense} is satisfied if and only if $\pi(i^\ast \U) $ is a Skula dense subspace of $\pi(\U)$.
	\end{lem}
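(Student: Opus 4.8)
The plan is to compare both conditions to the single statement that the restriction map of open sets $\opens(\pi(\U)) \to \opens(\pi(i^\ast \U))$ is injective, and to transport this statement down to the object space $U_0$ along the quotient map $q \colon U_0 \to \pi(\U)$.

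First I would set up the bookkeeping around $q$. As $q$ is a quotient map, the open subsets of $\pi(\U)$ are in order-preserving bijection with the $\U$-saturated open subsets of $U_0$ via $O \mapsto q^{-1}(O)$. I would then compute $q^{-1}(\pi(i^\ast \U))$: the set $\overline{Y_0} \cap U_0$ is $\U$-saturated, since any object of $U_0$ joined by an arrow of $\U$ to a member of $\overline{Y_0}$ is, via the inverse arrow, again $\X$-isomorphic to a member of $Y_0$; hence a $\U$-orbit meets $\overline{Y_0}$ if and only if it is contained in it, so that $q^{-1}(\pi(i^\ast \U)) = \overline{Y_0} \cap U_0$. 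In particular, for a $\U$-saturated open $W \subseteq U_0$, the trace on $\pi(i^\ast \U)$ of the corresponding open of $\pi(\U)$ pulls back along $q$ to $W \cap \overline{Y_0}$.

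Next I would run the topological reduction. Recall that a subspace is Skula dense exactly when its inclusion is a quasi-homeomorphism, i.e.\ exactly when the restriction map $\opens(\pi(\U)) \to \opens(\pi(i^\ast \U))$ is bijective; this map is automatically surjective by the definition of the subspace topology, so Skula density amounts to its injectivity, and since it preserves finite intersections, injectivity is the same as reflecting the inclusion order. Transporting this along $q$ using the previous paragraph, $\pi(i^\ast \U)$ is Skula dense in $\pi(\U)$ if and only if, for all $\U$-saturated open $W_1, W_2 \subseteq U_0$, the inclusion $W_1 \cap \overline{Y_0} \subseteq W_2 \cap \overline{Y_0}$ forces $W_1 \subseteq W_2$. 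Call this the \emph{saturated condition}.

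Finally I would match the saturated condition with \cref{df:skula-dense}. The conclusion of \cref{df:skula-dense} says precisely that $W'$ is contained in the $\U$-saturation $t(s^{-1}(W) \cap U_1)$ of $W$, so the implication from \cref{df:skula-dense} to the saturated condition is immediate on specialising to $\U$-saturated $W' = W_1$, $W = W_2$. For the converse, given arbitrary open $W, W' \subseteq U_0$ with $W' \cap \overline{Y_0} \subseteq W \cap \overline{Y_0}$, I would pass to the $\U$-saturations $\widetilde{W}, \widetilde{W'}$, which are open because $\U$, being an open subgroupoid of the open groupoid $\X$, has open source and target maps; I would check that the hypothesis lifts to $\widetilde{W'} \cap \overline{Y_0} \subseteq \widetilde{W} \cap \overline{Y_0}$, apply the saturated condition to obtain $\widetilde{W'} \subseteq \widetilde{W}$, and conclude $W' \subseteq \widetilde{W}$. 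The only step with real content is this lifting: it uses that $\overline{Y_0}$ is $\U$-saturated, so that any $\U$-arrow from a point of $\widetilde{W'}$ lying over $W'$ to a point of $\overline{Y_0}$ pushes its source into $W' \cap \overline{Y_0} \subseteq W$. Everything else is formal manipulation of frames and quotient maps, so I anticipate no genuine obstacle beyond this bookkeeping.
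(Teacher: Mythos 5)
Your proposal is correct and follows essentially the same route as the paper: identify opens of $\pi(\U)$ with $U_1$-saturated opens of $U_0$, observe that the restriction map $W \mapsto W \cap \overline{Y_0}$ is automatically surjective, and reduce Skula density to its injectivity, which is then matched against \cref{df:skula-dense}. The only difference is that you spell out the passage between saturated and arbitrary opens (via openness of the source and target maps and the $\U$-saturation of $\overline{Y_0}$), a step the paper compresses into ``easily recognised''.
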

	\begin{proof}
		Note that an open of $\pi(\U)$ is described by an open $W \subseteq U_0$ that is stable under the action of $U_1$, i.e.\ if $x \in W$ and $x \xrightarrow{\alpha} z \in U_1$, then $z \in W$ too.  The induced map on opens $\opens(\pi(\U)) \to \opens(\pi(i^\ast \U))$ acts on these opens by sending $W$ to $W \cap \overline{Y_0}$.  Note that this map is automatically surjective.  It is injective if and only if whenever $W' \cap \overline{Y_0} \subseteq W \cap \overline{Y_0}$, for some other open $W' \subseteq U_0$, then the $U_1$-orbit of $W'$ is contained in $W$.  The equivalence with condition \cref{df:skula-dense} is now easily recognised.
	\end{proof}
	
	\begin{prop}\label{surjection-iff-skula-dense-orbits}
		Let $i \colon \Y \hookrightarrow \X$ be the inclusion of subgroupoid into a logical groupoid.  The induced geometric morphism $\Sh(i) \colon \Sh(\Y) \to \Sh(\X)$ is a localic surjection if and only if $\Y$ has Skula dense $\U$-orbits for every open subgroupoid $\U \subseteq \X$.
	\end{prop}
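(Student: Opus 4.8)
The plan is to assemble the proposition from ingredients already in place: the localic-ness of $\Sh(i)$ (\cref{incl-of-grpd-induces-localic}), the identification carried out in the proof of \cref{classification-of-weqv}, and the lemma characterising Skula dense $\U$-orbits. Recall that a geometric morphism is a surjection precisely when its inverse image functor is faithful, i.e.\ injective on subobjects of every object. Since $\Sh(i)$ is localic, the results of \cite{myselfintloc} allow this to be detected on any generating set; taking the Moerdijk generators, $\Sh(i)$ is a localic surjection if and only if, for every open subgroupoid $\U \subseteq \X$, the map $\Sub_{\Sh(\X)}(\lrangle{\U}) \to \Sub_{\Sh(\Y)}(\Sh(i)^\ast\lrangle{\U})$ is injective. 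By the computation in the proof of \cref{classification-of-weqv}, this map is identified (via the two homeomorphisms of open lattices recorded there) with the inverse-image map $\iota^{-1} \colon \opens({ _\X[s^{-1}(U_0)]_\U}) \to \opens({ _\Y[s^{-1}(U_0) \cap t^{-1}(Y_0)]_\U})$ on opens. So the whole proposition reduces to the purely topological assertion: for fixed $\U$, the map $\iota^{-1}$ is injective if and only if $\Y$ has Skula dense $\U$-orbits.

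To prove this assertion I would use the homeomorphism ${ _\X[s^{-1}(U_0)]_\U} \cong \pi(\U)$ noted above and factor $\iota$ through its image, writing $\iota = \iota' \circ q$, where $q \colon { _\Y[s^{-1}(U_0) \cap t^{-1}(Y_0)]_\U} \twoheadrightarrow \pi(i^\ast\U)$ is the evident continuous surjection onto the subspace $\pi(i^\ast\U) \subseteq \pi(\U)$ and $\iota' \colon \pi(i^\ast\U) \hookrightarrow \pi(\U)$ is the subspace inclusion. Because $q$ is surjective, its inverse-image map on opens is injective (if $q^{-1}(V) = q^{-1}(V')$ then $V = q(q^{-1}(V)) = q(q^{-1}(V')) = V'$); because $\iota'$ is a subspace inclusion, its inverse-image map on opens is surjective. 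Hence $\iota^{-1} = q^{-1} \circ \iota'^{-1}$ is injective if and only if $\iota'^{-1}$ is injective, equivalently if and only if $\iota'^{-1} \colon \opens(\pi(\U)) \to \opens(\pi(i^\ast\U))$ is bijective, equivalently (by the Fact recalled above, via \cite[Proposition IV.10.1.2]{EGAIV}) if and only if $\pi(i^\ast\U)$ is a Skula dense subspace of $\pi(\U)$. By the preceding lemma this is exactly the condition that $\Y$ has Skula dense $\U$-orbits, and quantifying over all open subgroupoids $\U$ completes the argument.

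The substantive input is the combination of \cref{incl-of-grpd-induces-localic} with the detection-on-generators result of \cite{myselfintloc}: this is what lets the global statement ``$\Sh(i)$ is a surjection'' be checked object-by-object on the Moerdijk generators, and I would expect citing and invoking that correctly to be the only real point of difficulty. Everything after the reduction is bookkeeping, the one place needing mild care being the factorisation $\iota = \iota' \circ q$ and the observation that replacing $\iota$ by its corestriction to the image does not affect injectivity of the inverse-image map — so that injectivity of $\iota^{-1}$ is equivalent to the inclusion $\pi(i^\ast\U) \hookrightarrow \pi(\U)$ being a quasi-homeomorphism, which is precisely what the preceding lemma translates into the Skula dense $\U$-orbits condition.
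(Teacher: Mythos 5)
Your proposal is correct and follows essentially the same route as the paper: reduce via \cite{myselfintloc} to injectivity of the subobject maps on the Moerdijk generators, identify these with $\iota^{-1}$ using the computation from the proof of \cref{classification-of-weqv}, and translate injectivity of $\iota^{-1}$ into the statement that the image inclusion $\pi(i^\ast\U) \subseteq \pi(\U)$ is a quasi-homeomorphism, which the preceding lemma converts into the Skula dense $\U$-orbits condition. The only difference is that you spell out the elementary factorisation $\iota = \iota' \circ q$ where the paper simply cites \cite[\S IV.10.2]{EGAIV}; your verification of that step is correct.
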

	\begin{proof}
		By \cite{myselfintloc}, $\Sh(i)$ is a geometric surjection if and only if, for each open subgroupoid $\U \subseteq \X$, the induced map on subobjects $\Sh(i)^\ast_{\lrangle{\U}} \colon \Sub_{\Sh(\X)}(\lrangle{\U}) \to \Sub_{\Sh(\Y)}(\Sh(i)^\ast \lrangle{\U})$ is injective.  Recall from the proof of \cref{classification-of-weqv} that $\Sh(i)^\ast_{\lrangle{\U}}$ is isomorphic to the inverse image map
		\[
		\iota^{-1} \colon \opens({ _\X[s^{-1}(U_0)]_\U}) \to \opens({ _\Y[s^{-1}(U_0) \cap t^{-1}(Y_0)]}_\U).
		\]
		Recall also, from \cite[\S IV.10.2]{EGAIV}, that $\iota^{-1}$ is injective if and only if the inclusion of the image of $\iota$, that is the subspace inclusion $\pi(i^\ast \U) \subseteq \pi(\U)$, is a quasi-homeomorphism, thus completing the result.
	\end{proof}

	\begin{ex}\label{ex:groups-always-skula-dense}
		Let $H \subseteq G \subseteq \Omega(X)$ be the inclusions of topological groups.  An open subgroupoid of $G$, i.e.\ an open subgroup, only ever has one object, and so trivially the restricted orbit spaces are Skula dense.  Thus, by \cref{surjection-iff-skula-dense-orbits}, the induced geometric morphism $\B H \to \B G$ is surjective.  In fact, if we only desired $\B H \to \B G$ to be a surjection and omitted the requirement to be localic, then we do not need to assume that $H, G$ are subgroups of $\Omega(X)$.  For any subgroup of a topological group, the restriction of action functor is evidently faithful, and so $\B H \to \B G$ is surjective (cf.\ \cref{surj-on-objs-implies-surj}).
	\end{ex}
	
	\begin{ex}\label{ex:quasihomeo-for-spaces}
		Let $X$ be a topological space and $Y \subseteq X$ a subspace.  Both $X$ and $Y$ can be viewed as categorically discrete topological groupoids.  In this case, the condition that the inclusion $Y \subseteq X$ is Skula dense on orbits (of open subgroupoids) trivialises to requiring that $Y \subseteq X$ is Skula dense, i.e.\ $\opens(Y) \cong \opens(X)$, which is necessary and sufficient for there to be an equivalence of sheaf topoi $\Sh(Y) \simeq \Sh(X)$ (indeed, the induced geometric morphism $\Sh(Y) \to \Sh(X)$ is already the inclusion of a subtopos, see \cref{ex:inclusions-of-disc-grpds-are-thick}).
	\end{ex}
	
	\begin{prop}\label{skula-dense-orbits-is-wide}
		The class of subgroupoid inclusions $i \colon \Y \hookrightarrow \X$ which have Skula dense $\U$-orbits, for all open subgroupoids $\U \subseteq \X$, is a wide class.
	\end{prop}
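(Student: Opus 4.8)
The plan is to verify directly that the class of subgroupoid inclusions with Skula dense $\U$-orbits (for all open subgroupoids $\U$ of the codomain) contains identities and is closed under composition; closure under continuous isomorphisms is immediate since an isomorphism sends open subgroupoids to open subgroupoids and orbit spaces to homeomorphic orbit spaces. For identities $\id \colon \X \hookrightarrow \X$, the condition in \cref{df:skula-dense} is vacuous: $\overline{X_0} = X_0$, so $W' \cap \overline{X_0} \subseteq W \cap \overline{X_0}$ already says $W' \subseteq W$, and we may take $\alpha = \id_x$. Alternatively, one invokes the preceding lemma: $\pi(\id^\ast \U) = \pi(\U)$ is trivially Skula dense in itself.

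The substantive part is closure under composition. Suppose $i \colon \Y \hookrightarrow \X$ and $j \colon \mathbb{Z} \hookrightarrow \Y$ both have Skula dense orbits (for all relevant open subgroupoids), and fix an open subgroupoid $\U \subseteq \X$. I would argue via the lemma equating the condition with Skula density of the image in the orbit space $\pi(\U)$. The image of $\pi((j\circ i)^\ast \U)$ in $\pi(\U)$ factors as $\pi((j\circ i)^\ast \U) \subseteq \pi(i^\ast \U) \subseteq \pi(\U)$: the first inclusion is (homeomorphic to) the image of the restricted orbit space of the open subgroupoid $i^{-1}\U \subseteq \Y$ under the Skula-dense-orbits hypothesis for $j$, and the second is Skula dense by the hypothesis for $i$. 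The key input here is that $i^{-1}\U = (U_1 \cap Y_1 \rightrightarrows U_0 \cap Y_0)$ is an open subgroupoid of $\Y$ (as noted in the remark following the definition of the Moerdijk generators), so the hypothesis on $j$ applies to it, and that its orbit space $\pi(i^{-1}\U)$ maps onto the subspace $\pi(i^\ast\U) \subseteq \pi(\U)$ compatibly. Since a composite of quasi-homeomorphisms is a quasi-homeomorphism — equivalently, Skula density is transitive along subspace inclusions (a subspace of a subspace that is Skula dense at each stage is Skula dense), which follows from \cref{df:skula-dense} being about bijectivity of $\opens(-)$ — we conclude $\pi((j\circ i)^\ast \U)$ is Skula dense in $\pi(\U)$.

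Alternatively — and perhaps more cleanly — one can avoid the orbit-space bookkeeping altogether by appealing to \cref{surjection-iff-skula-dense-orbits}: a subgroupoid inclusion has Skula dense $\U$-orbits for every open $\U$ if and only if the induced geometric morphism $\Sh(i)$ is a localic surjection. Then wideness of the class reduces to: $\Sh(\id_\X) = \id$ is a localic surjection, and a composite $\Sh(i) \circ \Sh(j) \cong \Sh(i \circ j)$ of localic surjections is a localic surjection — surjections compose, and localic morphisms compose (see \cite[\S A4.6]{elephant}). This is the route I would actually take, relegating the direct combinatorial verification to a remark.

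The main obstacle, in the direct approach, is checking carefully that the identification $\pi(i^{-1}\U) \twoheadrightarrow \pi(i^\ast\U) \hookrightarrow \pi(\U)$ behaves well — in particular that the image of the second composite-level inclusion really is the subspace $\pi((j\circ i)^\ast\U)$ and that the quotient topologies match up — rather than something merely bijective. Using the geometric-morphism characterisation sidesteps this entirely, so I expect the proof to be short once \cref{surjection-iff-skula-dense-orbits} is in hand.
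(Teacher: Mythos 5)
Your preferred route is exactly the paper's proof: the statement is deduced abstractly from \cref{surjection-iff-skula-dense-orbits}, since localic surjections are closed under composition and under natural isomorphisms, with only an explicit supplementary check of closure under continuous isomorphisms of parallel inclusions. The direct orbit-space argument you sketch (whose obstacles you rightly flag) is not needed and does not appear in the paper.
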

	\begin{proof}
		As in \cref{weqv-is-wide}, the statement follows abstractly from \cref{surjection-iff-skula-dense-orbits} since localic surjections are closed under composition and natural isomorphisms.  For clarity, we include an explicit argument that the class is closed under continuous isomorphisms.  
		
		Let $i,j \colon \Y \hookrightarrow \X$ be continuously isomorphic inclusions of $\Y$ as a subgroupoid of $\X$, with $a \colon i \Rightarrow j$ as the isomorphism, and let $\U \subseteq \X$ be an open subgroupoid.  We wish to show that if $j$ has Skula dense $\U$-orbits, then so too does $i$.  But since \cref{df:skula-dense} only concerns the orbits of the images of the inclusions, which are identical as the images $i(Y_0)$ and $j(Y_0)$ are naturally isomorphic, the claim follows immediately. 
	\end{proof}
	
	\paragraph{Source determined orbits of opens.}  We now turn to study when $\Sh(i)$ is the inclusion of a subtopos, i.e.\ when the direct image functor $\Sh(i)_\ast$ is fully faithful.

	\begin{df}
		Let $\X$ be a logical groupoid, $i \colon \Y \hookrightarrow \X$ a subgroupoid, and $\U \subseteq \X$ an open subgroupoid.  We will say that an open
				\[
				V \subseteq s^{-1}(U_0) \cap t^{-1}(Y_0) = \lrset{x \xrightarrow{\alpha} y}{x \in U_0 \text{ and } y \in Y_0 }
				\]
		has a \emph{source determined orbit} if, for each arrow $x \xrightarrow{\alpha} y \in V$, there is an open neighbourhood $ W_\alpha \subseteq U_0$ of $x$ such that, for any other morphism $x' \xrightarrow{\gamma} y'$ with $x' \in W_\alpha$ and $y' \in Y_0$, there is a commuting square
				\[
				\begin{tikzcd}
						x' \ar{r}{\gamma} \ar{d}[']{\zeta} & y' \\
						x'' \ar{r}{\eta} & y'' \ar{u}[']{\theta}
					\end{tikzcd}
				\]
		where $\zeta \in U_1$, $\eta \in V$ and $\theta \in Y_1$.  This is equivalent to saying that there exists an open subset $W \subseteq U_0$ for which ${ _\Y[V]_\U} = s^{-1}(W) \cap t^{-1}(Y_0)$ (such a $W \subseteq U_0$ is given by $\bigcup_{\alpha \in V} W_\alpha$).	
	\end{df}
	
	\begin{prop}\label{subtopos-iff-thick}
		Let $i \colon \Y \hookrightarrow \X$ be the inclusion of a subgroupoid in a logical groupoid.  Then the geometric morphism $\Sh(i) \colon \Sh(\Y) \to \Sh(\X)$ is the inclusion of a subtopos if and only if, for every open subgroupoid $\U \subseteq \X$, every open $V \subseteq s^{-1}(U_0) \cap t^{-1}(Y_0)$ has a source determined orbit.
	\end{prop}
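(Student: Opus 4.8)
The plan is to follow the same route as the proof of \cref{surjection-iff-skula-dense-orbits}, with ``surjective'' in place of ``injective''. As recorded at the start of this subsection, for a subgroupoid inclusion $i \colon \Y \hookrightarrow \X$ of a logical groupoid the geometric morphism $\Sh(i)$ is localic (\cref{incl-of-grpd-induces-localic}), and by the results of \cite{myselfintloc} it is the inclusion of a subtopos precisely when, for every open subgroupoid $\U \subseteq \X$, the induced map $\Sh(i)^\ast_{\lrangle{\U}} \colon \Sub_{\Sh(\X)}(\lrangle{\U}) \to \Sub_{\Sh(\Y)}(\Sh(i)^\ast\lrangle{\U})$ is surjective. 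Recalling from the proof of \cref{classification-of-weqv} that, under the identifications $\Sub_{\Sh(\X)}(\lrangle{\U}) \cong \opens({ _\X[s^{-1}(U_0)]_\U})$ and $\Sub_{\Sh(\Y)}(\Sh(i)^\ast\lrangle{\U}) \cong \opens({ _\Y[s^{-1}(U_0)\cap t^{-1}(Y_0)]}_\U)$, this map is exactly the inverse image map $\iota^{-1}$, the whole statement reduces to translating ``$\iota^{-1}$ is surjective for every open subgroupoid $\U$'' into the condition that every open of $s^{-1}(U_0)\cap t^{-1}(Y_0)$ has a source-determined orbit.

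To carry out this translation I would first pin down the image of $\iota^{-1}$ on opens. Using the homeomorphism ${ _\X[s^{-1}(U_0)]_\U} \cong \pi(\U)$ from the discussion preceding \cref{df:skula-dense}, the opens of ${ _\X[s^{-1}(U_0)]_\U}$ are precisely the $U_1$-stable opens $W \subseteq U_0$, and one checks directly that $\iota^{-1}$ carries the corresponding open to $s^{-1}(W)\cap t^{-1}(Y_0)$, which is automatically a bi-orbit-stable open of $s^{-1}(U_0)\cap t^{-1}(Y_0)$. On the other side, every open of the subquotient ${ _\Y[s^{-1}(U_0)\cap t^{-1}(Y_0)]}_\U$ is represented by a bi-orbit-stable open $V \subseteq s^{-1}(U_0)\cap t^{-1}(Y_0)$, and for a general open $V$ the bi-orbit ${ _\Y[V]_\U}$ is again open (here one uses that $\X$ is open, so that the pre- and post-composition translates of an open set are open, and hence so is their union). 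Thus $\iota^{-1}$ is surjective exactly when every bi-orbit-stable open $V$ — equivalently, since ${ _\Y[V]_\U}$ is the least bi-orbit-stable open containing $V$, the bi-orbit of \emph{every} open $V$ — has the form $s^{-1}(W)\cap t^{-1}(Y_0)$ for some open $W \subseteq U_0$. The one subtle point is that $W$ may always be taken $U_1$-stable: given such an equality, replace $W$ by $s(V)$ with $V = { _\Y[V]_\U}$, which is open because the source map of an open groupoid is open, is $U_1$-stable because $V$ is closed under precomposition, and still satisfies $V = s^{-1}(s(V))\cap t^{-1}(Y_0)$ because $V$ is closed under postcomposition by $Y_1$. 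This is exactly the global reformulation appearing in the definition of a source-determined orbit; unwinding it pointwise — taking $W_\alpha := W$ for $\alpha \in V$ and reading the membership $\gamma \in s^{-1}(W)\cap t^{-1}(Y_0) = { _\Y[V]_\U}$ as the assertion that $\gamma = \theta \circ \eta \circ \zeta$ for some $\zeta \in U_1$, $\eta \in V$, $\theta \in Y_1$ — recovers the commuting-square formulation verbatim, while conversely the neighbourhoods $W_\alpha$ assemble into the required $W = \bigcup_{\alpha\in V} W_\alpha$.

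The main obstacle I anticipate is precisely this bookkeeping: verifying cleanly that the image of $\iota^{-1}$ on opens is $\{\, s^{-1}(W)\cap t^{-1}(Y_0) : W \subseteq U_0 \text{ a } U_1\text{-stable open}\,\}$, and that the ``local'' and ``global'' formulations of a source-determined orbit agree. Both are elementary but genuinely use the openness of $\X$ (so that bi-orbits of opens are open and $s$ is an open map) and the subquotient topology on ${ _\Y[s^{-1}(U_0)\cap t^{-1}(Y_0)]}_\U$; notably, none of the fullness or repleteness hypotheses that appear elsewhere in this section are required.
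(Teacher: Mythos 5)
Your proposal is correct and follows essentially the same route as the paper: both reduce, via the criterion from \cite{myselfintloc}, to surjectivity of $\Sh(i)^\ast_{\lrangle{\U}}$ on the Moerdijk generators, identify this map with $\iota^{-1}$ using the description of subobjects of $\lrangle{\U}$ and $\Sh(i)^\ast\lrangle{\U}$ as opens of the relevant orbit spaces, and then translate surjectivity into the global form ${_\Y[V]_\U} = s^{-1}(W)\cap t^{-1}(Y_0)$ of the source-determined-orbit condition (whose equivalence with the pointwise formulation is already recorded in the definition). Your extra bookkeeping on $U_1$-stability of $W$ and on bi-orbits of opens being open merely makes explicit what the paper leaves implicit.
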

	\begin{proof}
		Recall from \cite{myselfintloc} that $\Sh(i)$ is a geometric inclusion if and only if the subobject map 
		\[\Sh(i)^\ast_{\lrangle{\U}} \colon \Sh_{\Sh(\X)}(\lrangle{\U}) \to \Sub_{\Sh(\Y)}(\Sh(i)^\ast \lrangle{\U})\]
		is surjective for each open subgroupoid $\U \subseteq \X$.  Recall also (from \cite[Lemma 2.1.2]{awodeyforssell}) that the subobjects of $\lrangle{\U}$ correspond to $U_1$-stable open subsets of $U_0$ and that the subobjects of $\Sh(i)^\ast \lrangle{\U}$ correspond to opens of ${ _\Y[s^{-1}(U_0) \cap t^{-1}(Y_0)]_\U}$, i.e.\ the sets ${ _\Y[V]_\U}$ for an open subset $V \subseteq s^{-1}(U_0) \cap t^{-1}(Y_0)$.  Thus, $\Sh(i)$ is an inclusion if and only if, for each open $V \subseteq s^{-1}(U_0) \cap t^{-1}(Y_0)$, there exists a $U_1$-stable open $W \subseteq U_0$ such that ${ _\Y[V]_\U} = s^{-1}(W) \cap t^{-1}(Y_0)$.  The result now follows by observing that, if ${ _\Y[V]_\U} = s^{-1}(W') \cap t^{-1}(Y_0)$ for some subset $W' \subseteq U_0$, then $W'$ must be $U_1$-stable.
	\end{proof}
	
	\begin{ex}\label{ex:inclusions-of-disc-grpds-are-thick}
		Let $Y \subseteq X$ be a subspace inclusion viewed as the inclusion of categorically discrete topological groupoids as in \cref{ex:quasihomeo-for-spaces}.  Such an inclusion automatically has source determined stabilisations.  This is because, being categorically discrete, for any open subset $U \subseteq X$ (i.e.\ an open subgroupoid), an open $V \subseteq s^{-1}(U_0) \cap t^{-1}(Y_0) = \lrset{x = y}{x \in U_0, y \in Y_0}$ is just an open $V \subseteq U_0 \cap Y_0$, and hence evidently source determined (for $x = y \in V$, take any open neighbourhood $W \subseteq U_0$ of $x$).  Thus, by applying \cref{subtopos-iff-thick} we return the familiar fact that $\Sh(Y)$ is a subtopos of $\Sh(X)$.
	\end{ex}
	
	\begin{prop}\label{thick-is-wide}
		The class of subgroupoid inclusions $i \colon \Y \hookrightarrow \X$ for which every open $V \subseteq s^{-1}(U_0) \cap t^{-1}(Y_0)$ has determined orbit, for each open subgroupoid $\U \subseteq \X$, is a wide class.
	\end{prop}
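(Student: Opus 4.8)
The plan is to argue exactly as for \cref{weqv-is-wide} and \cref{skula-dense-orbits-is-wide}, transferring a closure statement across the bi-functor $\Sh$. By \cref{subtopos-iff-thick}, a subgroupoid inclusion $i \colon \Y \hookrightarrow \X$ of a logical groupoid has the property that every open $V \subseteq s^{-1}(U_0) \cap t^{-1}(Y_0)$ has a source determined orbit, for every open subgroupoid $\U \subseteq \X$, if and only if the induced geometric morphism $\Sh(i)$ is the inclusion of a subtopos. Hence it suffices to observe that the geometric inclusions form a wide class of geometric morphisms and to check that this transfers back along $\Sh$.

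First I would record the relevant stability properties of geometric inclusions: an equivalence, in particular an identity, is an inclusion; being an inclusion means the direct image functor is fully faithful, so a composite of inclusions is again an inclusion; and, for the same reason, being an inclusion is invariant under isomorphism of geometric morphisms. I would also note that, since every subgroupoid inclusion into a logical groupoid induces a localic geometric morphism (\cref{incl-of-grpd-induces-localic}), no ``localic'' side-condition can obstruct the transfer, and that a subgroupoid of a logical groupoid is again logical (the corollary to \cref{class-of-loggrpd}), so that \cref{subtopos-iff-thick} applies to every inclusion arising below.

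Then I would carry out the transfer. For the identity $\id_\X$, the morphism $\Sh(\id_\X) \cong \id_{\Sh(\X)}$ is an inclusion, so $\id_\X$ lies in the class. Given inclusions $j \colon \W \hookrightarrow \Y$ and $i \colon \Y \hookrightarrow \X$ in the class, the composite $i \circ j \colon \W \hookrightarrow \X$ is a subgroupoid inclusion into the logical groupoid $\X$, and $\Sh(i \circ j) \cong \Sh(i) \circ \Sh(j)$ is a composite of subtopos inclusions, hence a subtopos inclusion; by \cref{subtopos-iff-thick} applied to $i \circ j$, it lies in the class. Finally, if $i \cong j$ as continuous functors then $\Sh(i) \cong \Sh(j)$, so if $\Sh(i)$ is an inclusion then so is $\Sh(j)$, and $j$ lies in the class.

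For clarity, and in parallel with \cref{skula-dense-orbits-is-wide}, I would also include an explicit verification that the class is closed under continuous isomorphisms, for a reader who prefers not to route through \cref{subtopos-iff-thick}. Given continuously isomorphic inclusions $i, j \colon \Y \hookrightarrow \X$ with $a \colon i \Rightarrow j$, postcomposition by the components $a_y \colon i_0(y) \to j_0(y)$ of $a$ furnishes, for each open subgroupoid $\U \subseteq \X$, a homeomorphism $s^{-1}(U_0) \cap t^{-1}(i_0(Y_0)) \cong s^{-1}(U_0) \cap t^{-1}(j_0(Y_0))$ preserving sources and intertwining the postcomposition $Y_1$-actions, and this identifies the source-determined-orbit condition for $i$ with that for $j$. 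The only place where any care is needed, and the closest thing to an obstacle, is that here, unlike in \cref{df:skula-dense}, the reference set $t^{-1}(Y_0)$ and the postcomposition action genuinely depend on which of $i$ or $j$ is used, so one must actually transport opens along $a$ rather than merely observe that the orbits of the two images coincide; beyond this minor bookkeeping I expect no substantive difficulty, the real content residing entirely in \cref{subtopos-iff-thick}.
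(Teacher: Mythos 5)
Your proposal is correct and follows essentially the same route as the paper: the abstract transfer via \cref{subtopos-iff-thick} together with the wideness of subtopos inclusions, supplemented by an explicit verification of closure under continuous isomorphisms obtained by transporting the open $V$ along the components of $a$ (the paper phrases this as replacing $V$ by $V' = \lrset{a_y \circ \alpha}{\alpha \in V}$ and invoking naturality of $a$, which is your equivariant homeomorphism in different words). You also correctly identify the one point of genuine care, namely that $t^{-1}(i_0(Y_0))$ and the postcomposition action depend on the inclusion, so opens must actually be transported rather than merely compared.
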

	\begin{proof}
		As in \cref{weqv-is-wide} and \cref{skula-dense-orbits-is-wide}, this follows from \cref{subtopos-iff-thick} since subtopos inclusions are closed under composition and natural isomorphisms.  We include a direct proof that the class is closed under continuous isomorphisms.
		
		Let $i, j \colon \Y \hookrightarrow \X$ be continuously isomorphic inclusions of $\Y$ as a subgroupoid of $\X$, with $a \colon i \Rightarrow j$ as the isomorphism, and let $\U \subseteq \X$ be an open subgroupoid.  Suppose that each open subset of $s^{-1}(U_0) \cap t^{-1}(j_0(Y_0))$ has a source determined open.  We wish to show the same for $i$.  Let
			\[
			V \subseteq \lrset{x \xrightarrow{\alpha} i_0(y)}{x \in U_0 \text{ and } y \in Y_0} = s^{-1}(Y_0) \cap t^{-1}(i_0(Y_0))
			\]
			be an open subset.  We need to show that, for each $x \xrightarrow{\alpha} i_0(y) \in V$, there is an open neighbourhood $W \subseteq U_0$ of $x$ such that any other map $x' \xrightarrow{\gamma} i_0(y')$ in $s^{-1}(W) \cap t^{-1}(i_0(Y_0))$ can be rewritten as a composite $i_1(\theta) \circ \eta \circ \zeta$ with $\zeta \in U_1$, $\eta \in V$ and $\theta \in Y_1$.  Let $V'$ denote the subset
			\[
			V' = \lrset{x \xrightarrow{\alpha} i_0(y) \xrightarrow{a_y} j_0(y)}{\alpha \in V} \subseteq \lrset{x \xrightarrow{\alpha} j_0(y)}{x \in X_0, y \in Y_0}.
			\]
			Since $a$ is a continuous isomorphism, $V'$ is an open subset of $s^{-1}(Y_0) \cap t^{-1}(j_0(Y_0))$.  By assumption, there exists an open neighbourhood $W$ of $x$ and arrows $\zeta \in U_1$, $\eta' \in V'$ and $\theta \in Y_1$ such that the square
			\[
			\begin{tikzcd}
					x' \ar{r}{\gamma} \ar{d}[']{\zeta} & i_0(y') \ar{r}{a_{y'}} & j_0(y') \\
					x'' \ar{rr}{\eta'} && j_0(y'') \ar{u}[']{j_1(\theta)}
				\end{tikzcd}
			\]
			commutes.  As $\eta' \in V'$, it is of the form $x'' \xrightarrow{\eta} i_0(y'') \xrightarrow{a_{y''}} j_0(y'')$, and so using the naturality of $a$, we have a commuting diagram
			\[
			\begin{tikzcd}
					x' \ar{r}{\gamma} \ar{d}[']{\zeta} & i_0(y') \ar{r}{a_{y'}} & j_0(y') \\
					x'' \ar{r}{\eta} & i_0(y'') \ar{u}{i_1(\theta)} \ar{r}{a_{y''}} &  j_0(y'') \ar{u}[']{j_1(\theta)},
				\end{tikzcd}
			\]
		the left-hand square of which is the desired commuting square, completing the proof.
	\end{proof}
	
	Since a geometric morphism is an equivalence if and only if it is both a surjection and an inclusion (\cite[Corollary A4.2.11]{elephant}), we obtain the following alternative description of the class of weak equivalences:
	
	\begin{coro}\label{weqv-skula-dense-orbits-and-thick}
		Let $\X$ be a logical groupoid and $i \colon \Y \hookrightarrow \X$ a subgroupoid.  The induced geometric morphism $\Sh(i)$ is an equivalence of topoi if and only if, for each open subgroupoid $\U \subseteq \X$,
		\begin{enumerate}
			\item $\Y$ has Skula dense $\U$-orbits,
			\item every open $V \subseteq s^{-1}(U_0) \cap t^{-1}(Y_0)$ has a source determined orbit.
		\end{enumerate}
	\end{coro}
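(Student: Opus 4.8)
The plan is to deduce the corollary directly from the two preceding propositions, together with the standard fact that a geometric morphism is an equivalence precisely when it is simultaneously a surjection and an inclusion (\cite[Corollary A4.2.11]{elephant}). Since $\Sh(i)$ is already known to be localic by \cref{incl-of-grpd-induces-localic}, it suffices to characterise, purely in terms of the groupoid data, when $\Sh(i)$ is a surjection and when it is a subtopos inclusion, and then to take the conjunction of the two descriptions.

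First I would observe that, because $\Sh(i)$ is localic, it is a surjection if and only if it is a \emph{localic} surjection; by \cref{surjection-iff-skula-dense-orbits} this holds exactly when $\Y$ has Skula dense $\U$-orbits for every open subgroupoid $\U \subseteq \X$, i.e.\ condition (i). Likewise, $\Sh(i)$ is an inclusion if and only if it is a subtopos inclusion (a geometric inclusion always exhibits its domain as a subtopos), and by \cref{subtopos-iff-thick} this holds exactly when every open $V \subseteq s^{-1}(U_0) \cap t^{-1}(Y_0)$ has a source determined orbit, for every open subgroupoid $\U \subseteq \X$, i.e.\ condition (ii). Combining the two `iff' statements, $\Sh(i)$ is both a surjection and an inclusion --- equivalently, an equivalence --- if and only if both (i) and (ii) hold. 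Each direction of the corollary then reads off immediately: an equivalence is a surjection and an inclusion, so it satisfies (i) and (ii) by the forward halves of the propositions; conversely, (i) and (ii) force $\Sh(i)$ to be a localic surjection and a subtopos inclusion, hence an equivalence.

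I do not anticipate any genuine obstacle here: the corollary is a bookkeeping synthesis of \cref{incl-of-grpd-induces-localic}, \cref{surjection-iff-skula-dense-orbits}, \cref{subtopos-iff-thick} and \cite[Corollary A4.2.11]{elephant}. The only point meriting a sentence of care is the passage between `surjection' and `localic surjection' (and, symmetrically, between `inclusion' and `subtopos inclusion'), which is harmless precisely because \cref{incl-of-grpd-induces-localic} guarantees that $\Sh(i)$ is localic from the outset.
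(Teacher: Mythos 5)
Your proposal is correct and matches the paper's own (implicit) argument exactly: the corollary is stated as an immediate consequence of combining \cref{surjection-iff-skula-dense-orbits} and \cref{subtopos-iff-thick} with \cite[Corollary A4.2.11]{elephant}. Your extra sentence of care about the equivalence of `surjection' with `localic surjection' in the presence of \cref{incl-of-grpd-induces-localic} is a reasonable, correct refinement that the paper leaves tacit.
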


	\paragraph{Replete subgroupoids.}  In \cite{forssell-subgroupoid}, Forssell shows that the inclusion of a full and \emph{replete} topological subgroupoid induces a subtopos of the topos of sheaves.  We recover this result in the special case of logical groupoids by demonstrating that a full, replete subgroupoid automatically satisfies \cref{subtopos-iff-thick}.
	
	\begin{df}
		A subgroupoid $\Y \hookrightarrow \X$ is \emph{replete} if it is closed under isomorphisms (i.e.\ if $x \xrightarrow{\alpha} y \in X_1$ is an arrow with $x \in Y_0$, then $y \in Y_0$ too).  Thus, a replete, \emph{full} subgroupoid $\Y \hookrightarrow \X$ (i.e.\ a replete subgroupoid for which $Y_1 = s^{-1}(Y_0) \cap t^{-1}(Y_0)$) is determined by any subset $Y_0 \subseteq X_0$ that is $X_1$-stable (i.e.\ $Y_0 = ts^{-1}(Y_0)$).
	\end{df}

	\begin{coro}[Theorem 2.11 \cite{forssell-subgroupoid}]\label{full-replete-yields-subtopos}
		Let $\X$ be a logical groupoid.  If $i \colon \Y \hookrightarrow \X$ the inclusion of a full, replete subgroupoid, then $\Sh(i)$ the inclusion of a subtopos.
	\end{coro}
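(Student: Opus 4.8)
The plan is to check that a full, replete subgroupoid inclusion $i \colon \Y \hookrightarrow \X$ meets the criterion of \cref{subtopos-iff-thick}: for every open subgroupoid $\U \subseteq \X$ and every open $V \subseteq s^{-1}(U_0) \cap t^{-1}(Y_0)$, we must produce an open $W \subseteq U_0$ with ${ _\Y[V]_\U } = s^{-1}(W) \cap t^{-1}(Y_0)$ (so that $V$ has a source determined orbit). First I would extract the structural consequences of the hypotheses. Repleteness makes $Y_0$ an $X_1$-stable subset, so any arrow of $\X$ with its target (equivalently, by inverting, its source) in $Y_0$ has both endpoints in $Y_0$; fullness then puts such an arrow in $Y_1$. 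Hence $s^{-1}(U_0) \cap t^{-1}(Y_0) = s^{-1}(U_0 \cap Y_0) \cap Y_1$ is open in $Y_1$, and if $\gamma \colon x' \to y'$ and $\delta \colon x' \to z$ are arrows with $y',z \in Y_0$ then $\gamma \circ \delta^{-1} \in Y_1$. It also follows that precomposing an arrow of $V$ by an arrow of $U_1$ only ever involves arrows of $U_1 \cap Y_1$, so that ${ _\Y[V]_\U }$ is nothing but the orbit of the open set $V$ under $\Y$ acting by postcomposition and under the open subgroupoid $i^{-1}\U$ of $\Y$ acting by precomposition; being such an orbit, and since $\Y$ is open, it is an open subset of $Y_1$.

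Next I would take $W := s\bigl(U_1 \cap t^{-1}(s(V))\bigr)$. Since $\X$ is open, $s$ is an open map, so $W$ is an open subset of $U_0$. The inclusion ${ _\Y[V]_\U } \subseteq s^{-1}(W) \cap t^{-1}(Y_0)$ is immediate on unwinding the definition of bi-orbit: a representative $\theta \circ \eta \circ \zeta$ has source $s(\zeta)$ with $\zeta \in U_1$ and $t(\zeta) = s(\eta) \in s(V)$, and target $t(\theta) \in Y_0$. For the reverse inclusion, given $\gamma \colon x' \to y'$ with $x' \in W$ and $y' \in Y_0$, choose $\zeta_0 \in U_1$ with $s(\zeta_0) = x'$ and $\eta_0 \in V$ with $s(\eta_0) = t(\zeta_0)$; then $\theta := \gamma \circ (\eta_0 \circ \zeta_0)^{-1}$ has both endpoints in $Y_0$, hence lies in $Y_1$ by the previous paragraph, and $\gamma = \theta \circ \eta_0 \circ \zeta_0$ exhibits $\gamma \in { _\Y[V]_\U }$. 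With the equality established, \cref{subtopos-iff-thick} gives that $\Sh(i)$ is the inclusion of a subtopos.

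The main obstacle is conceptual rather than computational: it is seeing that repleteness, together with fullness, confines the whole bi-orbit computation to the open subgroupoid $Y_1 \cap s^{-1}(U_0)$ of $\Y$, so that the orbit ${ _\Y[V]_\U }$ is controlled by its set of sources $W$ alone and, crucially, is open. Without repleteness the set $s^{-1}(U_0) \cap t^{-1}(Y_0)$ need not lie inside $Y_1$, and the openness of the bi-orbit saturation — which underpins its identification with a subobject — is lost. Once this structural observation is in place, the remainder is a routine verification resting only on the openness of the source map of $\X$.
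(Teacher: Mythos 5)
Your overall strategy is the same as the paper's: verify the criterion of \cref{subtopos-iff-thick}, using repleteness to force every arrow with target in $Y_0$ to also have source in $Y_0$, and fullness to place the comparison arrow $\gamma \circ (\eta_0 \circ \zeta_0)^{-1}$ in $Y_1$. The set-theoretic identity ${ _\Y[V]_\U} = s^{-1}(W) \cap t^{-1}(Y_0)$ for your $W$ is correct, and the factorisation argument in your second paragraph is fine.

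There is, however, a genuine gap at the one point where topology actually enters: the claim that $W := s\bigl(U_1 \cap t^{-1}(s(V))\bigr)$ is open in $U_0$. The set $V$ is open only in the subspace $s^{-1}(U_0) \cap t^{-1}(Y_0)$ of $X_1$, so $s(V)$ is open only in $U_0 \cap Y_0$; indeed, by repleteness $s(V) \subseteq Y_0$, which need not be open in $X_0$. Consequently $t^{-1}(s(V))$, and hence $U_1 \cap t^{-1}(s(V))$, need not be open in $X_1$, and applying the open map $s$ to a non-open set does not yield an open set. In fact your $W$ is always contained in $Y_0$ (again by repleteness), so it can only be open in $U_0$ when $Y_0$ happens to be open; already in the categorically discrete case of a non-open subspace $Y_0 \subseteq X_0$ one gets $W = V$, which is not open in $U_0$. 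The missing step --- which is the one substantive step in the paper's proof --- is to pass to an ambient open: since $s(V)$ is open in $U_0 \cap Y_0$, the subspace topology supplies, for each $\alpha \in V$, an open $W_\alpha \subseteq U_0$ with $s(\alpha) \in W_\alpha \cap Y_0 \subseteq s(V)$, and the source-determined condition for $W_\alpha$ is then checked directly, using that by repleteness any arrow $x' \to y'$ with $x' \in W_\alpha$ and $y' \in Y_0$ has $x' \in W_\alpha \cap Y_0 \subseteq s(V)$. (Equivalently, your $W$ could be replaced by $s(U_1 \cap t^{-1}(W'))$ for an open $W' \subseteq U_0$ with $W' \cap Y_0 \subseteq s(V)$; this set genuinely is open and cuts out the same subset of $t^{-1}(Y_0)$.) With that repair your argument goes through.
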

	\begin{proof}
		We will show that the inclusion of full, replete subgroupoid satisfies the hypotheses of \cref{subtopos-iff-thick}.  Let $V \subseteq s^{-1}(U_0) \cap t^{-1}(Y_0)$ be an open subset and let $x \xrightarrow{\alpha} y \in V$.  As $s$ is an open map, the subset $ s(V) \subseteq U_0 \cap s(t^{-1}(Y_0)) = U_0 \cap Y_0$ is an open neighbourhood of $x$.  Therefore, by the definition of the subspace topology, there is an open subset $W \subseteq U_0$ such that $x \in W \cap Y_0 \subseteq s(V)$.
		
		We claim that $s^{-1}(W) \cap t^{-1}(Y_0) \subseteq { _\Y[V]_\U}$.  Given any arrow $x' \xrightarrow{\gamma} y'$ with $x' \in W$ and $y' \in Y_0$, since $\Y \subseteq \X$ is replete, this entails that $x' \in W \cap Y_0$, and hence $x' \in s(V)$.  Thus, there is some arrow $x' \xrightarrow{\eta} y'' \in V$, and so a commuting triangle
		\[
		\begin{tikzcd}[row sep=tiny]
			& y' \\
			x' \ar[bend left]{ru}{\gamma} \ar[bend right]{rd}[']{\eta} \\
			& y'' \ar{uu}[']{\gamma \circ \eta^{-1}}.
		\end{tikzcd}
		\]
		Since $\Y \subseteq \X$ is full, $\gamma \circ \eta^{-1} \in Y_1$, thus concluding the proof.
	\end{proof}

	\subsection{The surjection-inclusion factorisation}
	
	Let $\Phi \colon \X \to \Y$ be a continuous functor between logical groupoids.  Recall from \cite[Theorem A4.2.10]{elephant} that the geometric morphism $\Sh(\Phi)$ factorises uniquely (up to isomorphism) as a surjection followed by a subtopos inclusion.  Using the results of the previous subsection, we can witness this factorisation at the level of topological groupoids -- it is induced by the \emph{full essential image factorisation} of $\Phi$.
	
	\begin{df}
		For a continuous functor $\Phi \colon \X \to \Y$ between topological groupoids, let $\Phi(\X)$ denote the \emph{image} of $\Phi$, that is the subgroupoid of $\Y$ whose set of objects is the image of $\Phi_0 \colon X_0 \to Y_0$ and whose set of arrows is the image of $\Phi_1 \colon X_1 \to Y_1$.  The groupoid $\Phi(\X)$ is endowed with the subspace topologies coming from $\Y$.  
		
		We will use $\overline{\Phi(\X)}$ to denote the \emph{full essential image} of $\Phi$, that is the full subgroupoid of $\Y$ whose set of objects is the orbit $\overline{\Phi(X_0)}$.  Once again, we endow $\overline{\Phi(\X)}$ with the subspace topologies.
	\end{df}
	
	\begin{lem}\label{surj-on-objs-implies-surj}
		Let $\Phi \colon \X \to \Y$ be a continuous functor between topological groupoids.  If $\Phi$ is surjective on objects, then $\Sh(\Phi)$ is a geometric surjection.
	\end{lem}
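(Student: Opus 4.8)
The plan is to deduce surjectivity of $\Sh(\Phi)$ by factoring it through the forgetful surjections $u_\X$ and $u_\Y$ supplied by \cref{u_X-is-surj}. First I would record that the identity-on-objects continuous functors $u \colon X_0 \to \X$ and $u \colon Y_0 \to \Y$ make the square
\[
\begin{tikzcd}
	X_0 \ar{r}{\Phi_0} \ar{d}[']{u} & Y_0 \ar{d}{u} \\
	\X \ar{r}{\Phi} & \Y
\end{tikzcd}
\]
commute strictly in $\TopGrpd$ --- both composites act as $\Phi_0$ on objects and send each identity $\id_x$ to $\id_{\Phi_0(x)}$ --- so that, applying the bi-functor $\Sh$, the square of geometric morphisms
\[
\begin{tikzcd}
	\Sh(X_0) \ar{r}{\Sh(\Phi_0)} \ar{d}[']{u_\X} & \Sh(Y_0) \ar{d}{u_\Y} \\
	\Sh(\X) \ar{r}{\Sh(\Phi)} & \Sh(\Y)
\end{tikzcd}
\]
commutes up to natural isomorphism.

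Next I would check that the top edge $\Sh(\Phi_0)$ is a surjection: since $\Phi_0 \colon X_0 \to Y_0$ is a continuous \emph{surjection} of spaces, the inverse-image map $\Phi_0^{-1} \colon \opens(Y_0) \to \opens(X_0)$ is injective, so $\Phi_0$ is a surjection of the associated locales and $\Sh(\Phi_0)$ is accordingly a surjective geometric morphism. As $u_\Y$ is a surjection by \cref{u_X-is-surj} and surjections of topoi are closed under composition, the composite $u_\Y \circ \Sh(\Phi_0)$ is a surjection; being naturally isomorphic to it, $\Sh(\Phi) \circ u_\X$ is therefore a surjection as well.

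To conclude I would invoke the standard fact that if a composite $g \circ h$ of geometric morphisms is a surjection then $g$ is a surjection --- because faithfulness of $h^\ast g^\ast$ forces faithfulness of $g^\ast$, or equivalently because the inclusion part of the surjection--inclusion factorisation of $g$ must then be an equivalence. Applying this with $g = \Sh(\Phi)$ and $h = u_\X$ yields that $\Sh(\Phi)$ is a surjection, as required.

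I do not anticipate a genuine obstacle. The only ingredient that is not purely formal is that a continuous surjection of spaces induces a surjection of sheaf topoi, which is classical; the remainder is bookkeeping with the bi-functoriality of $\Sh$ and the stability of surjections under composition and left cancellation. (Alternatively one could bypass locales and verify directly that $\Sh(\Phi)^\ast$ is faithful on the forgetful images of a generating family of sheaves on $\Y$, but the factorisation argument is cleaner.)
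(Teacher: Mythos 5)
Your argument is correct and follows essentially the same route as the paper: form the commuting square with $u_\X$, $u_\Y$ and $\Sh(\Phi_0)$, note that $u_\Y \circ \Sh(\Phi_0)$ is a surjection, and conclude by right-cancellation of geometric surjections. The only difference is that you spell out why a continuous surjection of spaces induces a surjective geometric morphism, which the paper takes for granted.
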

	\begin{proof}
		If $\Phi$ is surjective on objects, i.e.\ $\Phi_0$ is a surjection, then there is a commuting diagram of geometric morphisms
		\[
		\begin{tikzcd}
			\Sh(X_0) \ar[two heads]{r}{\Sh(\Phi_0)} \ar[two heads]{d}[']{u_\X} & \Sh(Y_0) \ar[two heads]{d}{u_\Y} \\
			\Sh(\X) \ar{r}{\Sh(\Phi)} & \Sh(\Y)
		\end{tikzcd}
		\]
		in which $\Sh(\Phi_0), u_\X, u_\Y$ are all geometric surjections.  It then follows from the right-cancellation property for geometric surjections (see \cite[\S A4.5]{elephant}) that $\Sh(\Phi)$ must be surjective as well.
	\end{proof}

	\begin{lem}\label{image-into-full-ess-image-is-surj}
		Let $\Phi \colon \X \to \Y$ be a continuous functor between topological groupoids where $\Y$ is logical.  The inclusion of the image $\Phi(\X)$ into the full essential image $\overline{\Phi(\X)}$ has Skula dense $\U$-orbits, for any open subgroupoid $\U \subseteq \Y$.
	\end{lem}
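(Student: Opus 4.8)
The plan is to reduce the condition of \cref{df:skula-dense} to a triviality by observing that, measured inside the ambient groupoid $\overline{\Phi(\X)}$, the orbit of the object-set of $\Phi(\X)$ is everything.

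First I would record that $\overline{\Phi(\X)}$ is a logical groupoid: it is a topological subgroupoid of the logical groupoid $\Y$, and any such subgroupoid is logical (as noted after \cref{class-of-loggrpd}). So \cref{df:skula-dense} genuinely applies, with $\overline{\Phi(\X)}$ in the role of the ambient logical groupoid, the inclusion $\Phi(\X)\hookrightarrow\overline{\Phi(\X)}$ in the role of $i\colon\Y\hookrightarrow\X$, and $\U$ an open subgroupoid of $\overline{\Phi(\X)}$. Writing $Z_0 = \overline{\Phi(X_0)}$ for the objects of $\overline{\Phi(\X)}$ and $P_0 = \Phi_0(X_0)$ for the objects of $\Phi(\X)$, the key claim is that the orbit $\overline{P_0}$, computed in $\overline{\Phi(\X)}$, is all of $Z_0$. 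This is immediate from the definition of the full essential image: $Z_0$ consists of exactly those objects of $\Y$ isomorphic \emph{in $\Y$} to some object of $P_0$, and since $\overline{\Phi(\X)}$ is \emph{full} on $Z_0$, each such isomorphism already lies in $\overline{\Phi(\X)}$; hence every object of $Z_0$ is isomorphic in $\overline{\Phi(\X)}$ to an object of $P_0$, i.e.\ $\overline{P_0} = Z_0$.

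With this in hand the verification is essentially automatic. Given any open subgroupoid $\U = (U_1\rightrightarrows U_0)$ of $\overline{\Phi(\X)}$ (in particular, the restriction to $\overline{\Phi(\X)}$ of any open subgroupoid of $\Y$), we have $U_0\subseteq Z_0$, so for open subsets $W,W'\subseteq U_0$ the intersections $W\cap\overline{P_0}$ and $W'\cap\overline{P_0}$ are just $W$ and $W'$. Thus the hypothesis of \cref{df:skula-dense} collapses to $W'\subseteq W$, and for each $x\in W'$ one may take the identity $\id_x\in U_1$ --- present because $\U$ is a groupoid --- whose target $x$ lies in $W$. This exhibits Skula dense $\U$-orbits.

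I do not anticipate any real obstacle; the only point requiring care is that the orbit in \cref{df:skula-dense} must be taken inside the ambient groupoid $\overline{\Phi(\X)}$ rather than inside $\Y$, and fullness of $\overline{\Phi(\X)}$ makes these agree. If one prefers the reformulation via the lemma preceding \cref{surjection-iff-skula-dense-orbits}, the argument says exactly that $\pi(i^\ast\U) = \pi(\U)$, which is trivially Skula dense; so in fact $\Sh$ sends the inclusion $\Phi(\X)\hookrightarrow\overline{\Phi(\X)}$ to a geometric surjection.
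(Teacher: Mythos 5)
Your proof is correct and is essentially the paper's own argument, which likewise observes that by definition the orbit of $\Phi_0(X_0)$ computed in $\overline{\Phi(\X)}$ is the whole space of objects, so that \cref{df:skula-dense} is trivially satisfied. You simply spell out the two points the paper leaves implicit — that fullness of $\overline{\Phi(\X)}$ ensures the witnessing isomorphisms lie in $\overline{\Phi(\X)}$, and that the condition then collapses to $W'\subseteq W$ with identities as the required arrows.
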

	\begin{proof}
		It is immediate that the inclusion $\Phi(\X) \subseteq \overline{\Phi(\X)}$ satisfies \cref{df:skula-dense} since, by definition, the orbit of $\Phi_0(X_0)$ in $\overline{\Phi(\X)}$ is the whole space of objects.
	\end{proof}

	\begin{coro}\label{surj-incl-fact-is-full-ess-image}
		Let $\Phi \colon \X \to \Y$ be a continuous functor between topological groupoids where $\Y$ is logical.  The full essential image factorisation $\X \to \overline{\Phi(\X)} \hookrightarrow \Y$ of a continuous functor $\Phi \colon \X \to \Y$ is sent by $\Sh$ to the surjection-inclusion factorisation of $\Sh(\Phi)$. 
	\end{coro}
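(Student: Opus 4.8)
The plan is to recognise the full essential image factorisation $\X \xrightarrow{\Phi'} \overline{\Phi(\X)} \hookrightarrow \Y$ as a genuine surjection--inclusion factorisation once $\Sh$ is applied, and then to appeal to the uniqueness (up to equivalence) of such factorisations, \cite[Theorem A4.2.10]{elephant}. Note at the outset that, since $\Y$ is logical, both the image $\Phi(\X)$ and the full essential image $\overline{\Phi(\X)}$ are logical groupoids, being (topological) subgroupoids of $\Y$; hence all the machinery of \cref{sec:weqv} is available for them.

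First I would treat the inclusion $i \colon \overline{\Phi(\X)} \hookrightarrow \Y$. By construction $\overline{\Phi(\X)}$ is the full subgroupoid of $\Y$ on the object set $\overline{\Phi(X_0)}$, and $\overline{\Phi(X_0)}$ is $Y_1$-stable, being an orbit; thus $i$ is the inclusion of a full, replete subgroupoid, so $\Sh(i)$ is the inclusion of a subtopos by \cref{full-replete-yields-subtopos}.

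Next I would show that $\Sh(\Phi')$ is a geometric surjection by factoring $\Phi'$ further as $\X \xrightarrow{\Phi''} \Phi(\X) \hookrightarrow \overline{\Phi(\X)}$, where $\Phi''$ is the corestriction and $j$ denotes the evident inclusion. The corestriction $\Phi''$ is surjective on objects, so $\Sh(\Phi'')$ is a geometric surjection by \cref{surj-on-objs-implies-surj}. For $j$, the orbit of the objects of $\Phi(\X)$ inside $\overline{\Phi(\X)}$ is all of $\overline{\Phi(X_0)}$, so condition \cref{df:skula-dense} holds vacuously for every open subgroupoid $\U \subseteq \overline{\Phi(\X)}$ (this is precisely \cref{image-into-full-ess-image-is-surj}), and \cref{surjection-iff-skula-dense-orbits} then gives that $\Sh(j)$ is a (localic) surjection. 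Since geometric surjections are closed under composition, $\Sh(\Phi') \cong \Sh(j) \circ \Sh(\Phi'')$ is a surjection.

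Finally, bi-functoriality of $\Sh$ yields $\Sh(\Phi) \cong \Sh(i) \circ \Sh(\Phi')$, exhibiting $\Sh(\Phi)$ as a surjection followed by a subtopos inclusion; the uniqueness of the surjection--inclusion factorisation then forces $\Sh(\Phi')$ and $\Sh(i)$ to be, respectively, the surjection and inclusion parts of the factorisation of $\Sh(\Phi)$, which is the assertion. The proof is essentially an assembly of results already in hand, so I anticipate no serious obstacle; the only points demanding care are the verification that the intermediate groupoids $\Phi(\X)$ and $\overline{\Phi(\X)}$ are logical (so that \cref{surjection-iff-skula-dense-orbits} and \cref{full-replete-yields-subtopos} genuinely apply) and the bookkeeping over which ambient groupoid the Skula-density condition of \cref{image-into-full-ess-image-is-surj} is tested against, which is harmless here since that condition is vacuous for $j$.
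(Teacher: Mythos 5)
Your proposal is correct and follows essentially the same route as the paper: the same three-step factorisation $\X \twoheadrightarrow \Phi(\X) \hookrightarrow \overline{\Phi(\X)} \hookrightarrow \Y$, with \cref{surj-on-objs-implies-surj}, \cref{image-into-full-ess-image-is-surj} together with \cref{surjection-iff-skula-dense-orbits}, and \cref{full-replete-yields-subtopos} doing the work. The only (harmless) additions are your explicit invocation of the uniqueness of the surjection--inclusion factorisation and the check that the intermediate subgroupoids are logical, both of which the paper leaves implicit.
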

	\begin{proof}
		We factor $\Phi \colon \X \to \Y$ as
		\[
		\begin{tikzcd}
			\X \ar[two heads]{r} & \Phi(\X) \ar[hook]{r} & \overline{\Phi(\X)} \ar[hook]{r} & \Y.
		\end{tikzcd}
		\]
		The first factor, $\X \twoheadrightarrow \Phi(\X)$, is surjective on objects and hence sent by $\Sh$ to a surjective geometric morphism by \cref{surj-on-objs-implies-surj}.  The second factor, $\Phi(\X) \hookrightarrow \overline{\Phi(\X)}$ is sent by $\Sh$ to a (localic) geometric surjection by \cref{image-into-full-ess-image-is-surj} and \cref{surjection-iff-skula-dense-orbits}.  Thus, the composite $\X \to \overline{\Phi(\X)}$ is sent by $\Sh$ to a geometric surjection.  It remains to show that the final factor $\overline{\Phi(\X)} \hookrightarrow \Y$ is sent to a geometric embedding.  This follows since the full essential image $\overline{\Phi(\X)}$ is a full replete subgroupoid of $\Y$ (see \cite{forssell-subgroupoid} or \cref{full-replete-yields-subtopos}).  
	\end{proof}

	\subsection{Geometric morphisms are induced by cospans}
	
Finally, we demonstrate that every geometric morphism between sheaf topoi is induced by a cospan of continuous functors of topological groupoids where one leg is a weak equivalence.  This is a key step in the proof of \cref{thm:main-thm-intro}.
	
\begin{prop}\label{geo_is_hom_of_grpd}
	Let $\X$ and $\Y$ be logical groupoids.  For any geometric morphism 
	\[\begin{tikzcd}
		f \colon \Sh(\X) \ar{r} & \Sh(\Y),
	\end{tikzcd}
	\] 
	there is a cospan of continuous functors
	\[
	\begin{tikzcd}
		\X \ar{r}{\Phi} & \W & \ar[hook']{l}[']{\Psi} \Y,
	\end{tikzcd}
	\]
	where $\Psi \in \weqv$, such that $\Sh(\Psi) \circ f$ is naturally isomorphic to $\Sh(\Phi)$.
\end{prop}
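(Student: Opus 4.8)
The plan is to transport everything to the model-theoretic side: I will realise $f$ as a model, internal to $\Sh(\X)$, of a theory classified by $\Sh(\Y)$, and then enlarge $\Y$ to a representing groupoid $\W$ whose objects contain the stalks of that model over the points of $X_0$. First, by \cref{class-of-loggrpd} and the argument of its proof, fix an inhabited geometric theory $\theory$ over a signature $\Sigma$ with $\Sh(\Y)\simeq\topos_\theory$, so that, by \cref{class-repr-grpds-for-theory}, $\Y$ is identified with a conservative groupoid of $\Index$-indexed $\theory$-models that eliminates parameters, carrying its logical topologies. Under $\Topos(\Sh(\X),\Sh(\Y))\simeq\Tmodels{\Sh(\X)}$, the geometric morphism $f$ corresponds to a model $M$ of $\theory$ internal to $\Sh(\X)$, i.e.\ a compatible family of sheaves $(M_\phi)_\phi$ on $\X$ indexed by the objects $\form{\phi}{x}_\theory$ of the syntactic category $\cat_\theory$. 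Forgetting the $X_1$-action, each $M_\phi$ is a local homeomorphism $q_\phi\colon E_\phi\to X_0$ whose fibre over a point $x\in X_0$ is the interpretation of $\phi$ in the set-based $\theory$-model $N_x:=p_x^\ast M$, where $p_x\colon\sets\to\Sh(X_0)\xrightarrow{u_\X}\Sh(\X)$ is the point at $x$ (which exists by \cref{u_X-is-surj}); and for each arrow $\alpha\colon x\to x'$ of $X_1$ the $X_1$-actions on the $M_\phi$ restrict to an isomorphism of $\theory$-models $N_\alpha\colon N_x\xrightarrow{\sim}N_{x'}$. The sheaf-action axioms make $x\mapsto N_x$, $\alpha\mapsto N_\alpha$ a functor from $\X$ to the (large) groupoid of set-based $\theory$-models.

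Next I would build $\W$. The total spaces $E_\phi$ are fixed sets, so the cardinalities $|N_x|$ are uniformly bounded; and since each $q_\phi$ is a local homeomorphism, I may fix a set $S$ of continuous sections of the $q_\phi$ over open subsets of $X_0$ whose images form a basis for the topology of each $E_\phi$. Let $\Index^+$ be an infinite parameter set containing both $\Index$ and $S$. Let $\W$ be the groupoid of \emph{all} $\Index^+$-indexed $\theory$-models with its logical topologies, an open logical groupoid by \cref{class-of-loggrpd}, which is conservative (it contains the conservative set of $Y_0$-models) and eliminates parameters by \cref{ex:elim_para}, whence $\Sh(\W)\simeq\topos_\theory$ by \cref{class-repr-grpds-for-theory}. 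Index each model of $Y_0$ by its given $\Index$-indexing, viewed as an $\Index^+$-indexing in which the extra parameters are undefined, and index each $N_x$ by the partial surjection $\Index^+\paronto N_x$ in which a section $V\in S$ names its value at $x$ whenever $x$ lies in the domain of $V$ and is undefined otherwise. Because parameters outside $\Index$ are never interpreted in $Y_0$, the subspace topologies on $Y_0$ and $Y_1$ inherited from $\W$ coincide with their logical topologies, so $\Y$ is an honest topological subgroupoid of $\W$; and since $\Y$ is still conservative and still eliminates the extended parameters — the orbit of a tuple involving a parameter outside $\Index$ is empty, hence definable by $\bot$ — \cref{ex:Morita-equivalent-subgroupoids-when-theory}, together with \cref{coro:weqv_iff_hyp}, shows the inclusion $\Psi\colon\Y\hookrightarrow\W$ is a weak equivalence.

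The same indexing turns the stalk functor into a \emph{continuous} functor $\Phi\colon\X\to\W$, $x\mapsto N_x$, $\alpha\mapsto N_\alpha$: the preimage under $\Phi_0$ of a basic open $\classv{\phi}{m}_\W$ (and, similarly, of a basic open of $X_1$ under $\Phi_1$) is either empty — when some component of $\vec m$ lies outside $S$, so no $N_x$ interprets it — or is the locus on which a continuous section of a product of the $M_\phi$ assembled from members of $S$ lands in the open subsheaf cut out by the relevant formula, which is open by continuity of that section and of the $X_1$-actions. Finally, $\Sh(\Phi)$ and $\Sh(\Psi)\circ f$ are both geometric morphisms $\Sh(\X)\to\Sh(\W)\simeq\topos_\theory$, hence correspond to $\theory$-models in $\Sh(\X)$. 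By the explicit description of $\Sh(\Phi)^\ast$ as pullback of sheaves along $\Phi$, and the choice of $S$, pulling back the universal model of $\Sh(\W)$ (whose interpretations are the definable sheaves $\classv{\phi}{x}_\W$) along $\Phi$ returns exactly the family $(M_\phi)_\phi$, i.e.\ the model $M$; on the other hand $\Sh(\Psi)^\ast$ sends each $\classv{\phi}{x}_\W$ to $\classv{\phi}{x}_\Y$, hence carries the universal model of $\Sh(\W)$ to that of $\Sh(\Y)$, so $\Sh(\Psi)\circ f$ corresponds to the same model of $\Sh(\X)$ that $f$ itself does, namely $M$ again. Therefore $\Sh(\Phi)\cong\Sh(\Psi)\circ f$, as required.

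The delicate step is the construction of $\W$ in the second paragraph: the parameter set and the two families of indexings must be chosen so that, simultaneously, $\W$ still represents $\theory$, $\Y$ sits inside $\W$ as a genuine \emph{topological} subgroupoid (so that $\Psi\in\weqv$), and the stalk functor $\X\to\W$ is \emph{continuous}. It is this last requirement that forces one to name elements of the $N_x$ by continuous local sections of the étale spaces $E_\phi$ rather than individually — individual names would produce preimages that need not be open. Once a suitable $\W$ and indexing are in place, the rest is bookkeeping with \cref{class-repr-grpds-for-theory}, \cref{ex:elim_para} and \cref{ex:Morita-equivalent-subgroupoids-when-theory}.
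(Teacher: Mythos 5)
Your argument is correct, but it takes a genuinely different route from the paper's. The paper invokes the classification of geometric morphisms between classifying topoi by \emph{expansions} of theories (\cite[Theorem 7.1.5]{TST}): it chooses a theory $\theory'$ over an expanded signature classified by $\Sh(\X)$ so that $f \cong e^{\theory'}_{\theory}$, re-reads $\X$ (via \cref{class-repr-grpds-for-theory}) as a groupoid of indexed $\theory'$-models, and takes $\Phi$ to be the reduct functor into the groupoid $\W$ of all $\Index''$-indexed $\theory$-models for a common enlargement $\Index''$ of the two parameter sets; continuity of $\Phi$ is then immediate because the logical topology for the reduct is coarser than that of $\X$. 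You instead bypass the expansion theorem entirely: you unwind $f$ into the internal $\theory$-model $M = f^\ast U_\theory$ in $\Sh(\X)$, take $\Phi$ to be the stalk functor $x \mapsto p_x^\ast M$, and manufacture an indexing of the stalks by continuous local sections of the étale spaces $E_\phi$ so that $\Phi$ lands continuously in the groupoid of all $\Index^+$-indexed models. This is the genuinely novel point of your write-up, and you correctly identify it as the delicate step: naming stalk elements individually would destroy continuity, whereas naming them by sections makes $\Phi_0^{-1}\classv{\phi}{m}_\W$ the preimage of the open subsheaf $M_{\phi\wedge\psi}$ under a continuous section of a fibred product, and makes $\Phi_1^{-1}\class{\vec m_2 \mapsto \vec m_3}_\W$ an equaliser of two continuous maps into an étale space over $X_0$, hence open. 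The remaining verifications (that $\Y$ sits in $\W$ as a topological subgroupoid with the new parameters uninterpreted, that $\Psi \in \weqv$ by \cref{ex:Morita-equivalent-subgroupoids-when-theory}, and that $\Sh(\Phi)^\ast\classv{\phi}{x}_\W \cong M_\phi$ \emph{as topological sheaves} — the sections in $S$ being exactly what is needed to recover the étale topology on $E_\phi$ from the logical topology on $\classv{\phi}{x}_\W$) all go through. What the paper's route buys is brevity, by outsourcing the construction to the cited expansion theorem; what yours buys is a self-contained, geometric derivation that in effect reproves that theorem in this groupoid setting, and makes explicit \emph{why} the parameter set of $\W$ must be enlarged by data living over $X_0$.
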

\begin{proof}
	In order to sidestep checking the continuity of $\Phi$ and $\Psi$, we argue in terms of classifying toposes and indexed groupoids of models, for which continuity will be immediate.  Let $\theory$ be a geometric theory, over a signature $\Sigma$, classified by the topos $\Sh(\Y)$.  By \cite[Theorem 7.1.5]{TST}, the geometric morphism $f$ is, up to isomorphism, induced by an \emph{expansion} $\theory'$ of $\theory$.  That is, $\theory'$ is a geometric theory over an expanded signature $\Sigma' \supseteq \Sigma$ (that potentially adds new sorts) which contains the axioms of $\theory$.  There is an evident functor on syntactic categories $\cat_\theory \to \cat_{\theory'}$, where an object $\form{\phi}{x}_\theory \in \cat_\theory$ is sent to $\form{\phi}{x}_{\theory'} \in \cat_{\theory'}$, which in turn induces the geometric morphism
	\[
	\begin{tikzcd}
		\topos_{\theory'} \simeq \Sh(\X) \ar{rr}{e^{\theory'}_\theory \simeq f} && \Sh(\Y) \simeq \topos_\theory.
	\end{tikzcd}
	\]
	Recall that, under the equivalence $\Sh(\Y) \simeq \topos_\theory$, the object $\form{\phi}{x}_\theory \in \cat_\theory \subseteq \topos_\theory$, for $\phi$ a geometric formula, is identified with the definable sheaf $\classv{\phi}{x}_\Y$ from \cref{ex:definable_sheaves}.

	By \cref{class-repr-grpds-for-theory}, the groupoid $\Y$ can be identified with a groupoid of set-based models of $\theory$, admitting some indexing $\Index \paronto \Y$, such that $Y_0$ and $Y_1$ are endowed with the respective logical topologies induced by this indexing, and moreover $\Y$ is conservative and eliminates parameters.  Similarly, the groupoid $\X$ can be identified with a groupoid of $\Index'$-indexed models of $\theory'$ that is conservative and eliminates parameters.

	Let $\W$ be the groupoid of all $\Index''$-indexed models of $\theory$, where $\Index''$ is chosen to be large enough that $\Index'' \supseteq \Index, \Index'$.  Equipped with this indexing, $\W$ eliminates parameters (see \cref{ex:elim_para}) and it is conservative since $Y_0 \subseteq W_0$.  Furthermore, there is an evident inclusion of groupoids ${\Psi}  \colon \Y \hookrightarrow\W$, which by \cref{ex:Morita-equivalent-subgroupoids-when-theory} is a weak equivalence.
	
	Consider the functor $ \Phi \colon \X \to \W$ which sends a $\theory'$-model $M \in X_0$ to its $\theory$-reduct $M|_\theory \in W_0$, endowed with the restriction of the indexing $\Index'' \supseteq \Index' \paronto M$, and an isomorphism of $\theory'$-models $M \xrightarrow{\alpha} N \in X_1$ to the isomorphism on the $\theory$-reducts $M|_\theory \xrightarrow{\alpha|_\theory} N|_\theory \in W_1$.  It immediately follows that these maps are continuous with respect to the relevant logical topologies.  
	
	We now have the desired cospan of continuous functors.  It remains to show that $\Sh(\Phi)$ and $\Sh(\Psi) \circ f $ are naturally isomorphic.  Since $\Sh(\Psi)$ is an equivalence, this is equivalent to showing a natural isomorphism $\Sh(\Phi) \simeq f \simeq e^{\theory'}_\theory$.  It suffices to demonstrate that the inverse images agree on the syntactic category $\cat_\theory \subseteq \topos_\theory$, viewed as a full subcategory of the classifying topos, since this generates the topos.  Applying $\Sh(\Phi)^\ast$ to the definable sheaf $\pi_{\classv{\phi}{x}}\colon \classv{\phi}{x}_\W \to W_0$, i.e.\ taking the pullback along the functor $\Phi \colon \X \to \W$, we obtain the sheaf over $\X$ whose points are pairs $\lrangle{\vec{a},M}$ where $\vec{a} \in \pi_{\classv{\phi}{x}}^{-1}(\phi_0(M))$, that is $\vec{a} \in \classv{\phi}{x}_{M|_\theory} = \classv{\phi}{x}_M$.  It is thus easily confirmed that $\Sh(\Phi)^\ast(\classv{\phi}{x}_\W) \cong \classv{\phi}{x}_\X$, which corresponds to $\form{\phi}{x}_{\theory'}$ under the equivalence $\Sh(\X) \simeq \topos_{\theory'}$.  This witnesses the natural isomorphism $\Sh(\Phi) \simeq e^{\theory'}_\theory$.
\end{proof}

\begin{rem}\label{rem:geo-is-hom}
	It is worth emphasising the philosophical content of \cref{geo_is_hom_of_grpd}.  We have shown that every geometric morphism $f \colon \topos \to \ftopos$ between topoi with enough points is of the form $\Sh(\Phi) \colon \Sh(\X) \simeq \topos \to \ftopos \simeq \Sh(\W)$ for some continuous functor $\Phi \colon \X \to \W$ between logical groupoids.  That is to say, the geometric morphism $f$ is determined by a suitably continuous functor from a small groupoid of points of $\topos$ to a small groupoid of points for $\ftopos$, just like how continuous maps of topological spaces are, tautologically, a continuous assignment of points.  This was partly the motivation for the introduction of \emph{ionaid} in \cite{ionad}.  A similar exposition involving \emph{generalised} (i.e.\ not set-based) points can be found in \cite{vickers}.
\end{rem}


	\section{\'{E}tale complete topological groupoids}\label{sec:ectgrpd}
	
	While the theory of weak equivalences for topological groupoids established in the preceding section requires only that the topological groupoids be \emph{logical}.  In order to establish our desired bi-equivalence \cref{thm:main-thm-intro}, we will need to further restrict our attention to those logical groupoids whose space of arrows conforms well with the 2-cell data of its topos of sheaves.
	\begin{enumerate}
		\item We first introduce this restricted class of logical groupoids, the \emph{étale complete topological groupoids}.  We then give a logical characterisation in \cref{coro:etale_compl_characterisation}.
		\item Next, we demonstrate that the restriction of the bi-functor $\Sh \colon \TopGrpd \to \Topos_\wep^\iso$ to étale complete groupoids is full and faithful on 2-cells.
		\item Finally, we observe that étale complete topological groupoids forms a reflective (bi-)subcategory of (sober) logical groupoids.
	\end{enumerate}

	\begin{df}\label{df:etale_complete}
		An open topological groupoid $\X = (X_1 \rightrightarrows X_0)$ is said to be \emph{étale complete} if $X_0$ and $X_1$ are both sober spaces and the square
		\[
		\begin{tikzcd}
			\Sh(X_1) \ar{r}{\Sh(s)} \ar{d}[']{\Sh(t)} & \Sh(X_0) \ar{d}{u_\X} \\
			\Sh(X_0) \ar{r}{u_\X} & \Sh(\X)
		\end{tikzcd}
		\]
		is a bi-pullback in the bi-category $\Topos_\WEP$.  Let $\ECTGrpd \subseteq \TopGrpd$ denote the full bi-subcategory of étale complete open topological groupoids.
	\end{df}

	\begin{rem}
		Our choice of terminology, \emph{étale complete}, was chosen by analogy with the étale complete localic groupoids of Moerdijk \cite[Definition 7.2]{cont1}.  \'{E}tale complete topological groupoids were also discussed in \cite[\S 8.3]{myselfgrpds}.  Note that the use of the term \emph{\'{e}tale complete} in \cite{myselfgrpds} differs slightly from our use here in that \cite{myselfgrpds} did not require that the space of objects $X_0$ and arrows $X_1$ be sober spaces (cf.\ \cite[Definition 8.16]{myselfgrpds} and \cref{coro:etale_compl_characterisation}).
	\end{rem}
	
	Checking étale completeness can be broken down into two conditions without the need to compute bi-pullbacks of topoi with enough points explicitly.  Eventually, this will allow us to give a complete logical description of all the \'{e}tale complete topological groupoids in \cref{coro:etale_compl_characterisation}.
	
	\begin{prop}\label{etale_complete_iff_logical+stuff}
		An open topological groupoid $\Xtt = (X_1^{\tau_1}\rightrightarrows X_0^{\tau_0})$, where $X_0^{\tau_0}$ and $X_1^{\tau_1}$ are both sober spaces, is étale complete if and only if:
		\begin{enumerate}
			\item $\Xtt$ is logical in the sense of \cref{df:loggrpd}
			\item  and, for any pair $x,y \in X_0$, an isomorphism of the corresponding geometric morphisms
			\[\begin{tikzcd}
				\sets & {\Sh(X_0^{\tau_0})} \\
				{\Sh(X_0^{\tau_0})} & {\Sh(\Xtt)}
				\arrow["u_\X", from=1-2, to=2-2]
				\arrow["u_\X"', from=2-1, to=2-2]
				\arrow["x", from=1-1, to=1-2]
				\arrow["y"', from=1-1, to=2-1]
				\arrow["\alpha"', Rightarrow, from=1-2, to=2-1]
				\arrow["\sim"{marking, allow upside down}, shift left=2, draw=none, from=1-2, to=2-1]
			\end{tikzcd}\]
			is instantiated by an arrow $x \xrightarrow{\alpha} y \in X_1$.
		\end{enumerate}
	\end{prop}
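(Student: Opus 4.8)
The plan is to reduce étale completeness to the statement that one explicit continuous map of spaces is a homeomorphism, and then to split that homeomorphism condition into clauses (i) and (ii). Write $P$ for the bi-pullback $\Sh(X_0) \times_{\Sh(\X)} \Sh(X_0)$ in $\Topos_\WEP$, which is available since $u_\X$ is an open surjection (\cref{u_X-is-surj}). The square in \cref{df:etale_complete} commutes up to a canonical $2$-cell — namely $\Sh$ applied to the tautological continuous transformation $u \circ s \Rightarrow u \circ t$, where $u \colon X_0 \to \X$ is the categorically discrete subgroupoid of objects — so it factors through $P$ via a comparison geometric morphism $q \colon \Sh(X_1) \to P$ over $\Sh(X_0) \times \Sh(X_0)$; and $\Xtt$ is étale complete exactly when $q$ is an equivalence.

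First I would check that $q$ is a map of \emph{spatial} topoi over $\Sh(X_0 \times X_0) = \Sh(X_0)\times\Sh(X_0)$. On the left, $\Sh(X_1^{\tau_1}) \to \Sh(X_0\times X_0)$ is induced by the continuous map $(s,t)$, hence localic; on the right, $P \to \Sh(X_0)\times\Sh(X_0)$ is the bi-pullback of the diagonal $\Delta \colon \Sh(\X) \to \Sh(\X)\times\Sh(\X)$, which is localic because every object $E$ satisfies $E \cong \Delta^\ast\pr_1^\ast E$, and localic morphisms are stable under bi-pullback. Thus $P$ is localic over $\sets$, and since it has enough points it is spatial; as $X_1$ and $X_0\times X_0$ are sober we may write $P = \Sh(\hat X_1)$ for a sober space $\hat X_1 \to X_0\times X_0$ and $q = \Sh(f)$ for a continuous $f \colon X_1^{\tau_1} \to \hat X_1$ over $X_0\times X_0$. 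Unwinding the bi-pullback explicitly (this is essentially \cite[\S 8.3]{myselfgrpds}), the points of $\hat X_1$ are the triples $(x,y,\theta)$ with $\theta \colon u_\X\circ x \xrightarrow{\sim} u_\X\circ y$, its topology is the logical topology of \cref{ex:definable_sheaves} (reading off $\vec m_2 \mapsto \vec m_3$ from the action of $\theta$), and $f$ sends an arrow $g \colon x\to y$ to the triple $(x,y,\Sh(g))$.

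Since $X_1^{\tau_1}$ and $\hat X_1$ are sober, $q = \Sh(f)$ is an equivalence if and only if $f$ is a homeomorphism, i.e.\ if and only if $f$ is injective on points, surjective on points, and $f^{-1}\colon \opens(\hat X_1)\to\opens(X_1)$ is surjective. I would treat the three clauses in turn. Because $\Sh(\X)$ has enough points (being a surjective image of $\Sh(X_0)$), \cref{class-repr-grpds-for-theory} identifies $\X$ with a groupoid of $\Index$-indexed $\Sigma$-structures; then two arrows with the same source, target and induced $2$-cell agree on every parameter, so $f$ is always injective on points. Surjectivity of $f$ on points says exactly that every triple $(x,y,\theta)$ has the form $(s(g),t(g),\Sh(g))$, which is clause (ii). Finally, $f^{-1}$ carries each logical basic open of $\hat X_1$ to the corresponding one of $X_1$, so its image is the logical topology on $X_1$; as that topology is always contained in $\tau_1$ for an open $T_0$ groupoid of indexed structures (\cite[Proposition 6.4]{myselfgrpds}, as in the proof of \cref{class-of-loggrpd}), surjectivity of $f^{-1}$ amounts to $\tau_1$ being the logical topology — equivalently, by \cref{class-of-loggrpd}, to $\Xtt$ being logical, which is clause (i). Combining the three clauses yields the stated biconditional.

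The main obstacle is the identification in the second paragraph: pinning down the bi-pullback $P$ as the topos of sheaves on the space $\hat X_1$ of triples with its logical topology, and confirming that $P$ genuinely has enough points so that this spatial picture is legitimate. This bookkeeping — which is precisely where the hypotheses that $X_0$ and $X_1$ are sober do their work, allowing one to move freely between locales and honest spaces and continuous maps — is exactly what converts an opaque bi-pullback condition into the two transparent clauses (i) and (ii).
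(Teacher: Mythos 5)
Your proposal is correct in substance and rests on the same three computations as the paper's proof --- the bi-pullback $P$ is localic over $\sets$ with enough points (hence spatial), its points are the natural isomorphisms $u_\X \circ x \cong u_\X \circ y$, and its topology is compared with $\tau_1$ via the action of the canonical $2$-cell on definable sheaves --- but you package them differently. The paper treats the two directions separately: for ``\'etale complete $\Rightarrow$ logical'' it never computes the bi-pullback's topology at all, instead using the universal property (any coarser $\sigma$ with $\Sh(\X_{\tau_0}^{\sigma}) \simeq \Sh(\Xtt)$ makes $\Sh(X_1^{\sigma})$ a bi-cone, and the induced comparison to the bi-pullback $\Sh(X_1^{\tau_1})$ is the identity on points, forcing $\tau_1 \subseteq \sigma$); for the converse it sandwiches the bi-pullback topology $\sigma$ between the logical topology (the $\Theta$-component argument on $\classv{\top}{x}_\X$, which is your ``reading off $\vec{m}_2 \mapsto \vec{m}_3$ from $\theta$'') and $\tau_1$ (the bi-cone property), and these coincide because $\Xtt$ is assumed logical. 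Your single criterion ``the comparison map $f$ is a homeomorphism'', split into injectivity (which you rightly observe is automatic --- the paper is silent on this point), surjectivity on points, and surjectivity of $f^{-1}$ on opens, is arguably cleaner; but it leans on something the paper never proves, namely the \emph{upper} bound $\opens(\hat{X}_1) \subseteq {}$(logical topology on triples). Your step ``the image of $f^{-1}$ is the logical topology on $X_1$'' requires that the logical basic opens \emph{generate} all of $\opens(P)$, not merely that they are open in $P$; without it, $f^{-1}$ could be surjective for some $\tau_1$ strictly finer than the logical topology, and the equivalence with clause (i) would fail.

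That identification is exactly what you defer to \cite[\S 8.3]{myselfgrpds}, and the deferral is defensible --- the paper invokes the same source for the two equivalent descriptions of the \'etale completion --- but two points of care are needed to make it legitimate here. First, that reference identifies the bi-pullback for \emph{logical} groupoids, whereas in your forward direction $\Xtt$ is not yet known to be logical; you need the additional remark that coarsening $\tau_1$ to the logical topology (possible by the corollary following \cref{class-of-loggrpd}) changes neither $\Sh(\Xtt)$ nor $u_\X$, hence not $P$. Second, proving the upper bound directly amounts to exhibiting the triple space with the logical topology as a bi-cone over the cospan, i.e.\ verifying continuity of the canonical $2$-cell there; this is the genuinely non-formal step, and avoiding it is precisely why the paper routes the forward direction through the universal property instead.
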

	
	Before proving \cref{etale_complete_iff_logical+stuff}, we first show that $\Topos_\wep$ inherits an important property from the bi-category $\Topos$.

	\begin{lem}\label{localic-stable-in-Topos_wep}
		In the bi-category $\Topos_\wep$, localic geometric morphisms are stable under bi-pullback.
	\end{lem}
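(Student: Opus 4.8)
The plan is to form the bi-pullback in the ambient (bi-complete) bi-category $\Topos$ and to verify that, when one leg is localic and all three topoi have enough points, the result already lies in $\Topos_\wep$.  Since $\Topos_\wep$ is a \emph{full} bi-subcategory of $\Topos$ closed under equivalence, once this is known the $\Topos$-bi-pullback is automatically the bi-pullback in $\Topos_\wep$ as well (the relevant universal property is tested against the same hom-categories), and the pulled-back leg is localic because it is so in $\Topos$ -- which is precisely the asserted stability.  So let $f \colon \ftopos \to \topos$ be localic, $g \colon \mathcal{G} \to \topos$ arbitrary, with $\ftopos, \topos, \mathcal{G} \in \Topos_\wep$, and form the $\Topos$-bi-pullback
\[
\begin{tikzcd}
	\mathcal{P} \ar{r}{g'} \ar{d}[']{f'} & \ftopos \ar{d}{f} \\
	\mathcal{G} \ar{r}{g} & \topos .
\end{tikzcd}
\]
That $f'$ (the bi-pullback of $f$) is localic is the classical stability of localic geometric morphisms under bi-pullback in $\Topos$ (see \cite[\S A4.6]{elephant}: localic morphisms form the right class of the hyperconnected--localic factorisation, equivalently correspond to internal locales, and are pullback-stable).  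It remains to show $\mathcal{P}$ has enough points.

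For this I would reduce to fibres over points.  As $\mathcal{G}$ has enough points there is a jointly surjective small family of points $\{v_j \colon \sets \to \mathcal{G}\}_{j \in J}$ (\cite[Corollary 7.17]{topos}), i.e.\ a geometric surjection $v \colon \coprod_{j} \sets \twoheadrightarrow \mathcal{G}$.  Geometric surjections are stable under bi-pullback in $\Topos$, and bi-pullbacks of topoi distribute over coproducts, so pulling $v$ back along $f'$ gives a geometric surjection
\[
\coprod_{j \in J}\bigl(\ftopos \times_\topos \sets\bigr) \twoheadrightarrow \mathcal{P},
\]
in which the structure map $\sets \to \topos$ of the $j$-th summand is the point $g \circ v_j$.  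A topos receiving a geometric surjection from a topos with enough points has enough points (its inverse image functor is conservative, so the composites with a jointly surjective family of points are again jointly surjective), and a small coproduct of topoi with enough points has enough points.  Hence it suffices to prove: for $f \colon \ftopos \to \topos$ localic with $\ftopos \in \Topos_\wep$ and any point $e \colon \sets \to \topos$, the bi-pullback $\ftopos \times_\topos \sets$ has enough points.

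This spatiality-of-fibres statement is the crux, and I expect it to be the principal obstacle.  The projection $\ftopos \times_\topos \sets \to \sets$ is the bi-pullback of the localic morphism $f$, hence localic, so $\ftopos \times_\topos \sets \simeq \Sh(L)$ for an ordinary locale $L$; concretely $L$ is the fibre $e^\ast M$ of the internal locale $M$ in $\topos$ with $\ftopos \simeq \Sh_\topos(M)$, and the task is to show $L$ is spatial.  This is where the hypothesis that $\ftopos$ has enough points enters essentially: the points of $\ftopos$ over $e$ are exactly the points of $L$, and the enough-points property of $\ftopos$ together with localness of $f$ should force them to be jointly surjective for $\Sh(L)$.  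Concretely I would present $\ftopos \simeq \Sh(\mathbb{Y})$ for a topological groupoid $\mathbb{Y}$ with $Y_0$, $Y_1$ sober (Butz--Moerdijk \cite{BM}), use the constraint that localness of $f$ imposes on this presentation (in the spirit of the proof of \cref{incl-of-grpd-induces-localic}), and identify $\ftopos \times_\topos \sets$ with the topos of sheaves on a genuine (sober) space.  Granting this, $\mathcal{P}$ has enough points, the square above is a bi-pullback in $\Topos_\wep$, and $f'$ is localic there; every step other than this fibre computation is formal.
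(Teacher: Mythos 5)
There is a genuine gap, and it sits exactly where you predicted: the ``spatiality-of-fibres'' claim is false, and with it the premise of your whole strategy.  You want to show that the bi-pullback $\mathcal{P} = \ftopos \times_\topos \mathcal{G}$ computed in $\Topos$ already has enough points, so that it doubles as the bi-pullback in $\Topos_\wep$.  But this fails already in the simplest case: take $\topos = \sets$ and $\ftopos = \mathcal{G} = \Sh(\Q)$.  Both legs are localic and all three topoi have enough points, yet the bi-pullback in $\Topos$ is $\Sh(\Q \times_{\Loc} \Q)$, the topos of sheaves on the \emph{localic} product of the rationals with themselves, which is a classical example of a non-spatial locale (see Johnstone's \emph{Stone Spaces}); so $\mathcal{P}$ does not have enough points.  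The same phenomenon defeats the reduction to fibres that you identify as the crux: for $f$ localic with $\ftopos \in \Topos_\wep$ and a point $e$ of $\topos$, the fibre locale $e^\ast M$ need not be spatial.  The enough-points hypothesis on $\ftopos$ only says that the points of $\ftopos$, distributed across \emph{all} the fibres, are jointly surjective for $\ftopos$; it imposes nothing on any individual fibre, which may be a pointless or non-spatial locale.

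The paper's proof sidesteps this entirely by never asking the $\Topos$-bi-pullback to have enough points.  It observes that $\Topos_\wep$ is \emph{co-reflective} in $\Topos$ via $\Pt$ (each topos has a largest subtopos with enough points, through which every geometric morphism from a topos with enough points factors), so $\Pt$ preserves bi-limits and the bi-pullback in $\Topos_\wep$ is $\Pt(\ftopos \times_\topos \mathcal{G})$ --- in general a \emph{proper} subtopos of the $\Topos$-bi-pullback.  The projection to $\mathcal{G}$ is then the composite of a subtopos inclusion (which is localic) with the pulled-back leg (localic by stability in $\Topos$, which is the one ingredient your proposal and the paper share), hence localic.  To repair your argument, replace ``show $\mathcal{P}$ has enough points'' by ``apply $\Pt$ to $\mathcal{P}$ and note that the extra inclusion is localic''; the fibrewise analysis then becomes unnecessary.
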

	\begin{proof}
		This essentially follows from the fact that localic geometric morphisms are stable under bi-pullback in $\Topos$ \cite[Proposition 2.1]{fact1}.  Recall that the bi-category of topoi with enough points forms a co-reflective bi-subcategory of the bi-category of all topoi,
		\[\begin{tikzcd}
			{\Topos_\WEP} && \Topos.
			\arrow[""{name=0, anchor=center, inner sep=0}, shift left=2, hook, from=1-1, to=1-3]
			\arrow[""{name=1, anchor=center, inner sep=0}, "\Pt", shift left=2, from=1-3, to=1-1]
			\arrow["\dashv"{anchor=center, rotate=-90}, draw=none, from=0, to=1]
		\end{tikzcd}\]
		This is because each topos $\topos$ has a largest subtopos with enough points $\Pt(\topos) \hookrightarrow \topos$ through which every geometric morphism $\ftopos \to \topos$ whose domain has enough points must factor (see \cite[Corollary 7.18]{topos}).  Thus, $\Pt$ preserves bi-limits, in particular bi-pullbacks.  So the bi-pullback in $\Topos_\wep$ of a geometric morphism $f$ along $g$, as in the cospan
		\[
		\begin{tikzcd}
			& \ftopos \ar{d}{f} \\
			\mathcal{G} \ar{r}{g} & \topos,
		\end{tikzcd}
		\]
		can be calculated as the composite $\Pt(\ftopos \times_\topos \mathcal{G}) \hookrightarrow \ftopos \times_\topos \mathcal{G} \to \mathcal{G}$, where $\ftopos \times_\topos \mathcal{G}$ denotes the bi-pullback in $\Topos$.  If $f$ is localic, then so is $\ftopos \times_\topos \mathcal{G} \to \mathcal{G}$ and hence so is the composite $\Pt(\ftopos \times_\topos \mathcal{G}) \hookrightarrow \ftopos \times_\topos \mathcal{G} \to \mathcal{G}$, since inclusions of subtopoi are localic, completing the proof.
	\end{proof}

	\begin{proof}[Proof of \cref{etale_complete_iff_logical+stuff}]
		Suppose first that $\Xtt$ is étale complete.  By the universal property of the bi-pullback, every isomorphism of geometric morphisms $\alpha \colon u_\X \circ x \Rightarrow u_\X \circ y$, for $x, y \in X_0$, yields a geometric morphism $\alpha \colon \sets \to \Sh(X_1^{\tau_0})$, and hence a point of $X_1$.  If $\sigma$ is another topology on $X_1$ for which $\X_{\tau_0}^{\sigma}$ is a topological groupoid with the same topos of sheaves as $\Xtt$, then there is a factoring geometric morphism
		\[
		\begin{tikzcd}
			\Sh(X_1^{\sigma}) \ar[dashed]{rd} \ar[bend left = 2em]{rrd} \ar[bend right=3em]{ddr} && \\
			& \Sh(X_1^{\tau_1}) \ar{r}{\Sh(t)} \ar{d}[']{\Sh(s)} \ar[draw = none]{rd}[anchor = center, pos=0.125]{\lrcorner} & \Sh(X_0^{\tau_0}) \ar{d}{u_\X} \\
			& \Sh(X_0^{\tau_0}) \ar{r}{u_\X} & \Sh(\Xtt) &[-30pt] \simeq \Sh(\X_{\tau_0}^{\sigma}),
		\end{tikzcd}
		\]
		where the induced continuous map $X_1^{\sigma} \to X_1^{\tau_1}$ acts as the identity on the points of $X_1$, and so the inclusion $\tau_1 \subseteq \sigma$ is forced.  Thus, $\Xtt$ is both logical and every natural isomorphism between $u_\X \circ x$ and $u_\X \circ y$ is instantiated in the space of arrows.
		
		Now suppose that $\Xtt$ is logical and that every natural isomorphism $u_\X \circ x \cong u_\X \circ y$ is instantiated in the space of arrows.  We first demonstrate that the bi-pullback of the cospan
		\begin{equation}\label{prop:char_of_etalecomplete:eq:cospan-of-ux}
			\begin{tikzcd}
				& \Sh(X^{\tau_0}) \ar{d}{u_\X} \\
				\Sh(X^{\tau_0}) \ar{r}{u_\X} & \Sh(\Xtt)
			\end{tikzcd}
		\end{equation}
		is the topos of sheaves on a topological space whose underlying set of points is $X_1$, before demonstrating that the topology must be given by $\tau_1$.  For the former, by \cref{localic-stable-in-Topos_wep} we know that the bi-pullback of \cref{prop:char_of_etalecomplete:eq:cospan-of-ux} is a localic topos with enough points, i.e.\ the sheaves on a topological space.  The universal property of the bi-pullback entails that the points of this space are in bijective correspondence with the natural isomorphism $u_\X \circ x \cong u_\X \circ y$, which is precisely the set $X_1$ by assumption.  Hence, the bi-pullback of \cref{prop:char_of_etalecomplete:eq:cospan-of-ux} is given by $\Sh(X_1^\sigma)$ for some topology $\sigma$ on $X_1$.  It is also easily seen, by a diagram chase, that the projections $\Sh(X_1^\sigma) \rightrightarrows \Sh(X_0^{\tau_0})$ act as $s,t \colon X_1 \rightrightarrows X_0$ on the underlying points of $X_1$.
		
		It now remains to show that the topology $\sigma$ is the topology $\tau_1$.  One inclusion, $\sigma \subseteq \tau_1$, follows because $\Sh(X_1^{\tau_1})$ is a bi-cone for the cospan \cref{prop:char_of_etalecomplete:eq:cospan-of-ux}, and the universally induced arrow $\Sh(X_1^{\tau_1}) \to \Sh(X_1^\sigma)$ acts as the identity on points.  We now show the reverse inclusion.
		
		Let $\theory$ be a theory classified by $\Sh(\Xtt)$, so that $\X$ can be identified with a groupoid of models for $\theory$ and $\tau_1$ and $\tau_0$ with the logical topologies, given by some indexing $\Index \paronto \X$.  By the fact that both $ s,t \colon X_1^\sigma \rightrightarrows X_0^{\tau_0}$ are continuous, we have that $s^{-1}(\classv{\phi}{m}_\X), t^{-1}(\classv{\phi}{m}_\X) \in \sigma$, for each formula $\phi$ and tuple of parameters $\vec{m} \in \Index$, and so it remains only to show that $\class{\vec{m}_1 \mapsto \vec{m}_2}_\X \in \sigma$ too.
		
		Consider the canonical natural isomorphism filling in the square
		\[\begin{tikzcd}
			\Sh(X_1^{\sigma}) & {\Sh(X_0^{\tau_0})} \\
			{\Sh(X_0^{\tau_0})} & {\Sh(\Xtt)}
			\arrow["u_\X", from=1-2, to=2-2]
			\arrow["u_\X"', from=2-1, to=2-2]
			\arrow["\Sh(s)", from=1-1, to=1-2]
			\arrow["\Sh(t)"', from=1-1, to=2-1]
			\arrow["\Theta"', Rightarrow, from=1-2, to=2-1]
			\arrow["\sim"{marking, allow upside down}, shift left=2, draw=none, from=1-2, to=2-1]
		\end{tikzcd}\]
		The component of $\Theta$ at $W \in \Sh(\Xtt)$ is a homeomorphism
		\[
		\begin{tikzcd}
			\lrset{(\alpha,w)}{s(\alpha) = q(w)} &[-30pt] \cong \Sh(s)^\ast W \ar{rr}{\Theta_W} \ar[phantom, shift right = 2]{rr}[']{\sim} \ar[bend right=1.5em]{dr}[']{\pr} && \Sh(t)^\ast W \ar[bend left = 1.5em]{dl}{\pr'} &[-30pt] \cong \lrset{(\alpha,w')}{t(\alpha) = q(w')} \\
			&& X_1 &&
		\end{tikzcd}
		\]
		and acts by $(\alpha,w) \mapsto (\alpha, \alpha \cdot w)$.
		
		Consider the case where $W$ is given by a definable sheaf $\pi_{\classv{\top}{x}} \colon \classv{\top}{x}_\X \to X_0$ from example \cref{ex:definable_sheaves}.  Then 
		\[\lrset{\left(M \xrightarrow{\alpha} N, \vec{x} \in M\right)}{\vec{x} = \vec{m}_1} = X_1 \times_{X_0} \class{\vec{x} = \vec{m}_1}_\X \subseteq \Sh(s)^\ast\classv{\top}{x}_\X\] 
		is an open subset, as is
		\[
		\lrset{\left(M \xrightarrow{\alpha} N, \vec{x} \in M\right)}{\vec{x} = \vec{m}_2} = X_1 \times_{X_0} \class{\vec{x} = \vec{m}_2}_\X \subseteq \Sh(t)^\ast\classv{\top}{x}_\X
		\]
		Thus, as $\Theta_W$ must be continuous, the subset
		\begin{align*}
			\lrset{\left(M \xrightarrow{\alpha} N, \vec{x} \in M\right)}{\vec{x} = \vec{m}_1 \text{ and } \alpha(\vec{x}) = \vec{m}_2 } & = X_1 \times_{X_0} \class{\vec{x} = \vec{m}_1}_\X \cap \Theta_W^{-1}(X_1 \times_{X_0} \class{\vec{x}=\vec{m}_2}_\X) \\
			& \subseteq \Sh(s)^\ast \classv{\top}{x}_\X
		\end{align*}
		is open.  Finally, being a local homeomorphism, the projection $\pr_1 \colon \Sh(s)^\ast \classv{\top}{x}_\X \to X_1^\sigma$ is, in particular, open.  Hence, the subset
		\begin{align*}
			\class{\vec{m}_1 \mapsto \vec{m}_2}_\X & = \lrset{M \xrightarrow{\alpha} N }{\alpha(\vec{m}_1) = \vec{m}_2}, \\
			& = \pr_1\left( X_1 \times_{X_0} \class{\vec{x} = \vec{m}_1}_\X \cap \Theta_W^{-1}(X_1 \times_{X_0} \class{\vec{x}=\vec{m}_2}_\X)\right) \subseteq X_1^\sigma
		\end{align*}
		is open.  From this, we deduce that $\sigma = \tau_1$ as desired.
	\end{proof}

	\begin{lem}\label{sober-objs-and-elim-para-implies-sober-arrs}
		Let $\theory$ be a theory and $\Xtt = (X_1^{\tau_1} \rightrightarrows X_0^{\tau_0})$ a groupoid of models for $\theory$ given the logical topologies for some indexing $\Index \paronto \X$.  If the space $X_0^{\tau_0}$ is sober, and $X_1$ contains all isomorphisms of models contained in $X_0$, then $X_1^{\tau_1}$ is sober too.
	\end{lem}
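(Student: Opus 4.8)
The plan is to verify sobriety of $X_1^{\tau_1}$ directly: I will show that every completely prime filter $\mathcal{F}$ of open subsets of $X_1^{\tau_1}$ is the filter of open neighbourhoods of a point of $X_1$ (this point is then automatically unique, since $X_1^{\tau_1}$ is readily seen to be $T_0$, using that $X_0^{\tau_0}$ is). Since $\mathcal{F}$ is completely prime and the inverse-image maps $s^{-1}, t^{-1} \colon \opens(X_0^{\tau_0}) \to \opens(X_1^{\tau_1})$ are frame homomorphisms, the sets $s_*\mathcal{F} := \{\, V \mid s^{-1}(V) \in \mathcal{F} \,\}$ and $t_*\mathcal{F} := \{\, V \mid t^{-1}(V) \in \mathcal{F} \,\}$ are completely prime filters on the sober space $X_0^{\tau_0}$, so there are unique points $M, N \in X_0$ whose neighbourhood filters are $s_*\mathcal{F}$ and $t_*\mathcal{F}$ respectively; concretely, $M \vDash \phi(\vec{m})$ if and only if $s^{-1}(\classv{\phi}{m}_\X) \in \mathcal{F}$, and dually for $N$ and $t$. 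These $M$ and $N$ will be the source and target of the arrow we seek.

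Next I would construct an isomorphism $\alpha \colon M \to N$ of $\theory$-models by declaring $\alpha(\vec{m}^M) = \vec{m}'^N$ precisely when $\class{\vec{m} \mapsto \vec{m}'}_\X \in \mathcal{F}$. All the clauses — well-definedness, totality and surjectivity of $\alpha$ as a map of underlying sets, and its preservation and reflection of atomic formulae — are checked by one uniform pattern: an atomic fact about $M$ (resp.\ $N$) translates, via $s_*\mathcal{F}$ (resp.\ $t_*\mathcal{F}$), into membership in $\mathcal{F}$ of a suitable set of the form $s^{-1}(-)$ (resp.\ $t^{-1}(-)$); intersecting finitely many such members of $\mathcal{F}$ with the relevant sets $\class{\vec{m} \mapsto \vec{m}'}_\X$ yields a nonempty open of $X_1^{\tau_1}$, hence an actual arrow $\gamma$ of $\X$; since every arrow of a groupoid is an isomorphism, both $\gamma$ and $\gamma^{-1}$ preserve atomic formulae, so the fact transfers to the other endpoint of $\gamma$, and by upward-closure of $\mathcal{F}$ it lands back in $s_*\mathcal{F}$ or $t_*\mathcal{F}$. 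For instance (writing $\class{\vec{m}_1 = \vec{m}_2}_\X$ for the basic open $\{\, M \mid \vec{m}_1^M = \vec{m}_2^M \,\}$ of $X_0^{\tau_0}$), well-definedness uses that the open
\[
\class{\vec{m}_1 \mapsto \vec{m}_1'}_\X \cap \class{\vec{m}_2 \mapsto \vec{m}_2'}_\X \cap s^{-1}(\class{\vec{m}_1 = \vec{m}_2}_\X) \in \mathcal{F}
\]
is contained in $t^{-1}(\class{\vec{m}_1' = \vec{m}_2'}_\X)$; totality uses that $s^{-1}(\classv{\top}{m}_\X) = \bigcup_{\vec{m}'} \class{\vec{m} \mapsto \vec{m}'}_\X$ together with complete primeness; surjectivity is dual and additionally invokes surjectivity of the isomorphisms $\gamma$; and preservation/reflection of the relation and function symbols of the signature follows the same template.

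With $\alpha \colon M \xrightarrow{\sim} N$ in hand, the hypothesis that $X_1$ contains \emph{all} isomorphisms of models lying in $X_0$ is exactly what lets us conclude $\alpha \in X_1$. It then remains to check that the neighbourhood filter of $\alpha$ in $X_1^{\tau_1}$ equals $\mathcal{F}$; as $\tau_1$ is generated by the sets $s^{-1}(\classv{\phi}{m}_\X)$, $t^{-1}(\classv{\psi}{m'}_\X)$ and $\class{\vec{m} \mapsto \vec{m}'}_\X$, this is a membership check on these three kinds of basic open — the first two are immediate from the neighbourhood filters of $M$ and $N$ being $s_*\mathcal{F}$ and $t_*\mathcal{F}$, and for the third one unwinds the definition of $\alpha$ by the same finite-intersection argument as above. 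This completes the proof, uniqueness being supplied by the $T_0$ property of $X_1^{\tau_1}$.

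The main obstacle is the omnibus verification of the second step: one must keep careful track of which atomic facts live "at $M$" (and so are transported through $s_*\mathcal{F}$) versus "at $N$" (transported through $t_*\mathcal{F}$), and observe that the partiality of the indexing — only a subset $\Index' \subseteq \Index$ of parameters being interpreted in any given model — causes no difficulty, since the indexing of each model is nonetheless surjective onto its underlying set and "$M$ interprets $\vec{m}$" is itself described by the open $\classv{\top}{m}_\X$ of $X_0^{\tau_0}$. Everything else is routine filter-theoretic bookkeeping.
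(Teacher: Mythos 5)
Your proposal is correct and follows essentially the same route as the paper's proof: push the completely prime filter forward along $s^{-1}$ and $t^{-1}$ to obtain models $M$ and $N$ by sobriety of $X_0^{\tau_0}$, assemble the isomorphism $\alpha$ from the basic opens $\class{\vec{m}\mapsto\vec{m}'}_\X$ lying in the filter via containments between basic opens, invoke the hypothesis that $X_1$ contains all isomorphisms to place $\alpha$ in $X_1$, and read off uniqueness from the same basic opens. The only cosmetic differences are that the paper constructs $\alpha^{-1}$ explicitly rather than verifying bijectivity directly, and phrases uniqueness without naming the $T_0$ property.
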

	\begin{proof}
		Let $F \subseteq \tau_1$ be a completely prime filter.  We must show that $F$ is the neighbourhood filter for a unique point of $X_1$.  The sets $ \lrset{U \in \tau_0}{s^{-1}(U) \in F}$ and $ \lrset{V \in \tau_0}{t^{-1}(V) \in F}$ are both completely prime filters of $\tau_0$, and so, by the sobriety of $X_0^{\tau_0}$, they correspond to unique models $M, N \in X_0$.  We will show that $F$ is the neighbourhood filter for a unique isomorphism $M \xrightarrow{\alpha} N \in X_1$.
		
		For an element $n \in M$, let $m \in \Index$ be a parameter indexing $n$.  We therefore have that $M \in \class{m : \top}_\X$ and so $s^{-1}(\class{m : \top}_\X) \in F$.  Then $ s^{-1}(\class{m : \top}_\X) \subseteq \bigcup_{m' \in \Index} \class{m \mapsto m'}_\X \in F$, and so, as $F$ is completely prime, $\class{m \mapsto m'}_\X$ must be in $F$ for some $m'$.  Now we have that $\class{m \mapsto m'}_\X \subseteq t^{-1}(\class{m' : \top}_\X)$.  Therefore, $N \in \class{m' : \top}_\X$, i.e.\ there is some $n' \in N$ indexed by $m'$.  We define $\alpha(n) = n'$.  This yields a well defined map $M \xrightarrow{\alpha} N$ because for any other parameter $m'' \in \Index$ for which $\class{m \mapsto m''}_\X \in F$, using that $F$ is a completely prime filter, it is possible to show that $n'$ must also be indexed by $m''$.  In a similar fashion, we can construct a well-defined map $M \xrightarrow{\alpha^{-1}} N$ which is an inverse to $\alpha$.  
		
		The map $\alpha$ preserves the interpretation of formulae, and hence it is an isomorphism of models.  To see why this is the case, if $M \vDash \phi(\vec{n}) $, then, letting $\vec{m} \in \Index$ be a tuple of parameters indexing $\vec{n}$, we have by the above construction a tuple $\vec{m}' \in \Index$ indexing $\alpha(\vec{n})$ such that $s^{-1}(\classv{\phi}{m}_\X) \cap \class{\vec{m} \mapsto \vec{m}'}_\X \in F$.  Now, since $s^{-1}(\classv{\phi}{m}_\X) \cap \class{\vec{m} \mapsto \vec{m}'}_\X \subseteq t^{-1}(\class{\vec{m}' : \phi}_\X)$, we conclude that $N \in \class{\vec{m}' : \phi}_\X$, i.e.\ $N \vDash \phi(\alpha(\vec{n}))$.  As $X_1$ contains all possible isomorphisms of models contained in $X_0$, we have that $\alpha \in X_1$.
		
		Suppose that $M \xrightarrow{\gamma} N \in X_1$ is another isomorphism of models whose neighbourhood filter is $F$.  Then $\alpha \in \class{m \mapsto m'}_\X \iff \gamma \in \class{m \mapsto m'}_\X$, and so we conclude that $\alpha(n) = \gamma(n)$ for all $n \in M$.
	\end{proof}
	
	\begin{coro}\label{coro:etale_compl_characterisation}
		The \'{e}tale complete topological groupoids are precisely those topological groupoids $\Xtt = (X_1^{\tau_1} \rightrightarrows X_0^{\tau_0})$ coming from a groupoid of models for some theory $\theory$, where $\tau_1$ and $\tau_0$ are both the logical topologies given some indexing $\Index \paronto \X$, where:
		\begin{enumerate}
			\item\label{enum:etale-compl:elim_para} $\X$ eliminates parameters for this indexing,
			\item\label{enum:etale-compl:sober} the space $X_0^{\tau_0}$ is sober,
			\item\label{enum:etale-compl:all_isos} the set $X_1$ contains all isomorphisms between models contained in $X_0$.
		\end{enumerate}
	\end{coro}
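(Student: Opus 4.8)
The plan is to read off this characterisation by assembling four earlier results: \cref{etale_complete_iff_logical+stuff}, which reduces étale completeness of an open groupoid $\Xtt$ with $X_0,X_1$ sober to (a) $\X$ being logical and (b) every natural isomorphism $u_\X \circ x \cong u_\X \circ y$ (for $x,y \in X_0$) being instantiated by an arrow of $X_1$; the classification of logical groupoids \cref{class-of-loggrpd}; the correspondence between representing groupoids and theories \cref{class-repr-grpds-for-theory}; and the sobriety criterion \cref{sober-objs-and-elim-para-implies-sober-arrs}.

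For the forward implication, I would start from an étale complete $\Xtt$. Then $X_0$ and $X_1$ are sober and, by \cref{etale_complete_iff_logical+stuff}, $\X$ is logical and has the instantiation property. Arguing exactly as in the proof of \cref{class-of-loggrpd}, choose an inhabited geometric theory $\theory$ over a signature $\Sigma$ classified by $\Sh(\Xtt)$ and use \cref{class-repr-grpds-for-theory} to identify $\X$ with a groupoid of $\theory$-models carrying an indexing $\Index \paronto \X$ for which $\tau_0,\tau_1$ are the logical topologies and for which $\X$ is conservative and eliminates parameters. This already gives \cref{enum:etale-compl:elim_para}, and \cref{enum:etale-compl:sober} holds since $X_0$ is sober. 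For \cref{enum:etale-compl:all_isos}, note that an isomorphism between two models $M,N \in X_0$ (that is, an isomorphism of their underlying $\Sigma$-structures) is the same thing as an isomorphism of $\theory$-models, hence corresponds, under the equivalence $\Sh(\Xtt) \simeq \topos_\theory$, to a natural isomorphism $u_\X \circ x \cong u_\X \circ y$ between the points picked out by $M$ and $N$; by the instantiation property this 2-cell is realised by an arrow of $X_1$, so $X_1$ contains every isomorphism between models of $X_0$.

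Conversely, suppose $\Xtt$ arises from a groupoid of models for a theory $\theory$ over $\Sigma$ with the logical topologies for some indexing $\Index \paronto \X$, and that \cref{enum:etale-compl:elim_para}, \cref{enum:etale-compl:sober} and \cref{enum:etale-compl:all_isos} hold. Replacing $\theory$ by the geometric theory over $\Sigma$ of all sequents valid in every member of $X_0$ — which changes neither the logical topologies nor the property of eliminating parameters — we may assume $X_0$ is also conservative. Then \cref{class-repr-grpds-for-theory} shows that $\X$, with the logical topologies, is an open topological groupoid with $\Sh(\Xtt) \simeq \topos_\theory$, and \cref{class-of-loggrpd} shows it is logical. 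The space $X_0$ is sober by \cref{enum:etale-compl:sober}, and $X_1$ is sober by \cref{sober-objs-and-elim-para-implies-sober-arrs} applied with \cref{enum:etale-compl:sober} and \cref{enum:etale-compl:all_isos}. Finally, any natural isomorphism $u_\X \circ x \cong u_\X \circ y$ corresponds under $\Sh(\Xtt) \simeq \topos_\theory$ to an isomorphism $M \cong N$ of $\theory$-models, i.e.\ an isomorphism of the $\Sigma$-structures $M$ and $N$, which lies in $X_1$ by \cref{enum:etale-compl:all_isos}; hence the instantiation property holds, and \cref{etale_complete_iff_logical+stuff} gives that $\Xtt$ is étale complete.

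The step I expect to require the most care is the passage between arrows of $X_1$ and 2-cells of $\Sh(\Xtt)$: one must check that, under the equivalence $\Sh(\Xtt)\simeq\topos_\theory$ transporting the universal model onto the definable sheaves of \cref{ex:definable_sheaves}, the natural isomorphism $u_\X\circ x \cong u_\X\circ y$ determined by a structure isomorphism $\alpha\colon M\to N$ is precisely the 2-cell implicit in the bi-pullback description used in \cref{etale_complete_iff_logical+stuff}, and that this assignment is a bijection natural in $\alpha$. I anticipate this is routine but notationally heavy; everything else is an assembly of the cited results.
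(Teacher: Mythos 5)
Your proof is correct and follows essentially the same route as the paper: both directions are assembled from \cref{etale_complete_iff_logical+stuff}, \cref{class-of-loggrpd}, \cref{class-repr-grpds-for-theory} and \cref{sober-objs-and-elim-para-implies-sober-arrs} in the same way, with your version merely spelling out a little more explicitly the conservativity adjustment and the identification of natural isomorphisms $u_\X \circ x \cong u_\X \circ y$ with $\Sigma$-structure isomorphisms of the corresponding models (which the paper leaves implicit).
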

	\begin{proof}
		If $\Xtt$ is \'{e}tale complete then, being logical, $\Xtt$ is a groupoid of indexed models for some theory $\theory$ such that $\Xtt$ eliminates parameters and both $\tau_0$ and $\tau_1$ are the induced logical topologies.  By assumption, $X_0^{\tau_0}$ is sober, and by \cref{etale_complete_iff_logical+stuff} every isomorphism between points of $X_0$, i.e.\ models, is instantiated in the set of arrows.
		
		For the converse, a groupoid $\Xtt$ of models for a theory $\theory$, endowed with the logical topologies for an indexing $\Index \paronto \X$, and satisfying conditions \cref{enum:etale-compl:elim_para} to \cref{enum:etale-compl:all_isos}, is open by \cite[Lemma 7.3]{myselfgrpds} and moreover logical by \cref{class-of-loggrpd}, and the space $X_1^{\tau_1}$ is sober by \cref{sober-objs-and-elim-para-implies-sober-arrs}.  Finally, as $X_1$ also contains every isomorphism of models contained in $X_0$, the hypotheses of \cref{etale_complete_iff_logical+stuff} are satisfied, and so $\Xtt$ is \'{e}tale complete.
	\end{proof}
	
	\begin{ex}\label{ex:forssell-groupoids-are-ec}
		In particular, the groupoid of all $\Index$-indexed models for a theory $\theory$, and all isomorphisms of these models, satisfies the conditions of \cref{coro:etale_compl_characterisation} (see \cite[Proposition 1.2.7]{awodeyforssell}).
	\end{ex}

	\begin{ex}\label{ex:etale-complete-groups}
		Using \cite[Theorem 4.14]{hodges}, a subgroup $G \subseteq \Omega(X)$ of the topological permutation group on a set $X$ is étale complete if and only if $G$ is a closed subgroup.  Combined with \cref{groups-thick-becomes-dense}, this entails that the only weak equivalences between étale complete topological groups are homeomorphisms.
	\end{ex}
	
	\paragraph{Transformations for \'{e}tale complete topological groupoids.}
	
	Below in \cref{prop:weqv_is_calc}, we will show that $\weqv$ defines a left bi-calculus of fractions on $\ECTGrpd$.  The localization bi-functor $\ECTGrpd \to [\weqv^{-1}]\ECTGrpd$, as constructed in \cite{pronk}, is full and faithful on 2-cells.  Therefore, our desired bi-equivalence 
	\[[\weqv^{-1}]\ECTGrpd \simeq \Topos_\WEP^\iso\]
	is induced by the bi-functor $\Sh$ only if its restriction to étale complete topological groupoids is full and faithful on 2-cells.  This is provided by the following lemma.
	
	\begin{lem}\label{Sh-ff-on-2-cells}
		For a pair of \'etale complete topological groupoids $\X, \Y$, the functor on hom-categories
		\[
		\Sh \colon \ECTGrpd(\X, \Y) \to \Topos_\wep^\iso(\Sh(\X),\Sh(\Y))
		\]
		is full and faithful.
	\end{lem}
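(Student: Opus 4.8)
The plan is to establish faithfulness and fullness separately, using throughout the logical description of \'etale complete groupoids from \cref{coro:etale_compl_characterisation}. First note that, since $\Y$ is a groupoid, every continuous natural transformation between continuous functors $\X \to \Y$ is invertible (its componentwise inverse is continuous because $i \colon Y_1 \to Y_1$ is), so $\ECTGrpd(\X,\Y)$ is itself a groupoid, as is the codomain $\Topos_\wep^\iso(\Sh(\X),\Sh(\Y))$. Fix a geometric theory $\theory$ over a signature $\Sigma$ classified by $\Sh(\Y)$, presented so that $\Y$ is a groupoid of $\theory$-models carrying the logical topologies for an indexing $\Index \paronto \Y$, eliminating parameters, with $Y_0$ sober and $Y_1$ containing \emph{every} isomorphism between models lying in $Y_0$ (\cref{coro:etale_compl_characterisation}). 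Under $\Sh(\Y) \simeq \topos_\theory$, the definable sheaves $\classv{\phi}{x}_\Y$ generate and correspond to the objects $\form{\phi}{x}_\theory$ of $\cat_\theory$; write $E_\sigma := \classv{\top}{x_\sigma}_\Y$ for the sheaf attached to a single variable of sort $\sigma$, whose fibre over $M \in Y_0$ is the carrier of $M$ at $\sigma$. For a continuous functor $\Phi \colon \X \to \Y$, the inverse image $\Sh(\Phi)^\ast$ carries the universal model $U_\theory$ to a $\theory$-model $M_\Phi$ in $\Sh(\X)$ with $M_\Phi(\form{\phi}{x}_\theory) = \Phi_0^\ast \classv{\phi}{x}_\Y$; in particular, applying $u_\X^\ast$ and taking the stalk at $x \in X_0$ recovers the $\theory$-model $\Phi_0(x)$.

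For faithfulness, suppose $a, b \colon \Phi \Rightarrow \Psi$ are continuous natural transformations with $\Sh(a) = \Sh(b)$. Evaluating at $E_\sigma$, the component $\Sh(a)_{E_\sigma} \colon \Phi_0^\ast E_\sigma \to \Psi_0^\ast E_\sigma$ acts by $(x, n) \mapsto (x, a_x(n))$, and likewise for $b$; hence $a_x(n) = b_x(n)$ for every $x \in X_0$, every sort $\sigma$, and every $n$ in the $\sigma$-carrier of $\Phi_0(x)$. As isomorphisms of models are determined by their underlying maps, $a_x = b_x$ for all $x$, i.e.\ $a = b$.

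For fullness, let $\theta \colon \Sh(\Phi)^\ast \Rightarrow \Sh(\Psi)^\ast$ be a natural isomorphism. Restricting $\theta$ along $\cat_\theory \subseteq \topos_\theory \simeq \Sh(\Y)$ exhibits it as an isomorphism of $\theory$-models $M_\Phi \xrightarrow{\sim} M_\Psi$ in $\Sh(\X)$. Taking the stalk at $x \in X_0$ --- the inverse image of a point of $\Sh(\X)$, hence preserving $\theory$-models and their isomorphisms --- yields an isomorphism of $\theory$-models $a_x \colon \Phi_0(x) \xrightarrow{\sim} \Psi_0(x)$. Since $\Phi_0(x), \Psi_0(x) \in Y_0$ and $\Y$ is \'etale complete, $a_x \in Y_1$, so we obtain a map $a \colon X_0 \to Y_1$ with $s \circ a = \Phi_0$ and $t \circ a = \Psi_0$. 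That $a$ is a natural transformation is the $X_1$-equivariance of $\theta$ on the $E_\sigma$ written out: for $g \in X_1$ it reads $a_{t(g)} \circ \Phi_1(g) = \Psi_1(g) \circ a_{s(g)}$.

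It remains to verify that $a$ is continuous and that $\Sh(a) = \theta$; the continuity is the crux. Preimages under $a$ of the logical subbasic opens $s^{-1}(\classv{\phi}{m}_\Y)$ and $t^{-1}(\classv{\phi}{m}_\Y)$ equal $\Phi_0^{-1}$, respectively $\Psi_0^{-1}$, of opens, hence are open. For the remaining subbasic opens $\class{\vec m \mapsto \vec m'}_\Y$: a parameter tuple $\vec m \in \Index$ determines a continuous partial section $\tilde m$ of the relevant definable sheaf $E := \classv{\top}{x}_\Y$ (for $\vec x$ of the appropriate product of sorts) over the open $\classv{\top}{m}_\Y \subseteq Y_0$, since its image $\class{\vec x = \vec m}_\Y$ is open in $E$ and the local homeomorphism $E \to Y_0$ restricts to a homeomorphism there. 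Then $\theta \circ \Phi_0^\ast \tilde m$ and $\Psi_0^\ast \tilde m'$ are continuous partial sections of the local homeomorphism $\Psi_0^\ast E \to X_0$, and their equalizer --- open, as the equalizer of any two sections of an \'etale map is --- is precisely $a^{-1}(\class{\vec m \mapsto \vec m'}_\Y)$; so $a$ is continuous. Finally, $\Sh(a)$ and $\theta$ agree on each $E_\sigma$ by construction of $a$, and since inverse image functors preserve products, coproducts and monomorphisms while every object of $\Sh(\Y)$ is a quotient of a coproduct of subobjects of finite products of the $E_\sigma$ (the definable sheaves being generating), the usual universal-property argument forces $\Sh(a) = \theta$. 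The main obstacle is thus the continuity of $a$, which is exactly where the logical topology on $Y_1$ (parameters as partial sections) and the \'etale completeness of $\Y$ (each $a_x$ lying in $Y_1$) are indispensable.
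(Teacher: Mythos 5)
Your proposal is correct, and it takes a genuinely different route from the paper's proof. The paper argues directly from the bi-pullback square in \cref{df:etale_complete}: for faithfulness it uses that arrows of $Y_1$ correspond \emph{uniquely} to natural isomorphisms between the associated points of $\Sh(\Y)$; for fullness it whiskers $\gamma$ with $u_\X$, invokes the universal property of the bi-pullback to obtain a geometric morphism $\Sh(X_0)\to\Sh(Y_1)$, uses sobriety of $Y_1$ to realise this as a continuous map $a\colon X_0\to Y_1$, and then checks $\Sh(a)=\gamma$ via the joint surjectivity of the points $u_\X\circ x$. You instead work entirely through the logical characterisation of \cref{coro:etale_compl_characterisation}: the components $a_x$ are produced as stalkwise isomorphisms of $\theory$-models, \'etale completeness enters only through the clause that $Y_1$ contains all such isomorphisms, and the continuity of $a$ --- which the paper's argument outsources to the bi-pullback property plus sobriety --- is verified by hand against the subbasic opens of the logical topology on $Y_1$, via the observation that $a^{-1}(\class{\vec m \mapsto \vec m'}_\Y)$ is the equaliser of two continuous sections of a local homeomorphism and hence open. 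What your approach buys is transparency about exactly which ingredients are used (notably, sobriety of $Y_1$ is never invoked, and your faithfulness argument works for any logical groupoid); what the paper's approach buys is brevity, since the bi-pullback machinery has already been set up and does the continuity check invisibly. The closing step, that agreement on the sort-sheaves $E_\sigma$ propagates to all of $\Sh(\Y)$ through finite products, subobjects, coproducts and quotients, is the standard argument that a model homomorphism is determined by its components at the sorts, and is sound.
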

	\begin{proof}
		Let $\Phi, \Psi \colon \X \rightrightarrows \Y$ be a pair of continuous functors.  Note that, for each $x \in X_0$, an arrow $\Phi_0(x) \xrightarrow{\alpha} \Psi_0(x) \in Y_1$ corresponds uniquely with a natural isomorphism
		\[
		\begin{tikzcd}
			\sets \ar{rr}{x} \ar{dd}[']{x} && \Sh(X_0) \ar{d}{\Sh(\Phi_0)} \ar[Rightarrow]{lldd}[']{\alpha}  \ar[draw=none, shift left=2]{ldld}[marking]{\sim} \\
			&& \Sh(Y_0) \ar{d}{u_\Y} \\
			\Sh(X_0) \ar{r}[']{\Sh(\Psi_0)} & \Sh(Y_0) \ar{r}[']{u_\Y} & \Sh(\Y).
		\end{tikzcd}
		\]
		In particular, if $a \colon \Phi \Rightarrow \Psi$ is a continuous transformation, then the component $\Phi_0(x) \xrightarrow{a_x} \Psi_0(x)$ corresponds to the natural isomorphism
		\[\begin{tikzcd}
			\sets &[-10pt] &[20pt] {\Sh(X_0)} \\[-10pt]
			&& {\Sh(\X)} \\[20pt]
			{\Sh(X_0)} & {\Sh(\X)} & {\Sh(\Y)}.
			\arrow["x", from=1-1, to=1-3]
			\arrow["x"', from=1-1, to=3-1]
			\arrow["{u_\X}", from=1-3, to=2-3]
			\arrow[equal, from=2-3, to=3-2]
			\arrow[""{name=0, anchor=center, inner sep=0}, "{\Sh(\Phi)}", from=2-3, to=3-3]
			\arrow["{u_\X}", from=3-1, to=3-2]
			\arrow[""{name=1, anchor=center, inner sep=0}, "{\Sh(\Psi)}"', from=3-2, to=3-3]
			\arrow["{\Sh(a)}"', shorten <=5pt, shorten >=5pt, Rightarrow, from=0, to=1]
			\arrow["\sim"{marking, allow upside down}, shift left=2, draw=none, from=0, to=1]
		\end{tikzcd}\]
		So if $\Sh(a) = \Sh(a')$, for another transformation $a' \colon \Phi \Rightarrow \Psi$, then $a_x = a'_x$ for all $x \in X_0$, i.e.\ $a= a'$.  Thus, $\Sh \colon \ECTGrpd(\X, \Y) \to \Topos_\wep^\iso(\Sh(\X),\Sh(\Y))$ is faithful.
		
		Now let $\gamma \colon \Sh(\Phi) \Rightarrow \Sh(\Psi)$ be a natural isomorphism between the induced geometric morphisms.  Then there is a natural isomorphism
		\[
		u_\Y \circ \Sh(\Phi_0) = \Sh(\Phi) \circ u_\X \xRightarrow{\gamma \ast u_\X} \Sh(\Psi) \circ u_\X = u_\Y \circ \Sh(\Psi_0),
		\]
		i.e.\ filling the square
		\[
		\begin{tikzcd}
			\Sh(X_0) \ar{r}{\Sh(\Phi_0)} \ar{d}[']{\Sh(\Psi_0)} & \Sh(Y_0) \ar{d}{u_\Y} \ar[Rightarrow]{ld}[']{\gamma \ast u_\X}  \ar[draw=none, shift left=2]{ld}[marking]{\sim}\\
			\Sh(Y_0) \ar{r}[']{u_\Y} & \Sh(\Y).
		\end{tikzcd}
		\]
		Therefore, by the universal property of the bi-pullback, there is an induced geometric morphism $\Sh(X_0) \to \Sh(Y_1)$, which by sobriety, corresponds to a continuous map $a \colon X_0 \to Y_1$.  By the imposed commutativity conditions, $a$ encodes a natural transformation $a \colon \Phi \Rightarrow \Psi$.  Moreover, by the way $a$ was obtained, for each point of $X_0$, i.e.\ each geometric morphism $x \colon \sets \to \Sh(X_0)$, we have that 
		\[\Sh(a) \ast (u_\X \circ x) = \gamma \ast (u_\X \circ x).\]
		Since the family of geometric morphisms $\lrset{u_\X \circ x}{x \in X_0}$ is jointly surjective, it follows that $\Sh(a) = \gamma$.  Hence, the functor $\Sh \colon \ECTGrpd(\X, \Y) \to \Topos_\wep^\iso(\Sh(\X),\Sh(\Y))$ is full.
	\end{proof}

	\paragraph{The étale completion of a sober logical groupoid.}
	
	We will say that a logical groupoid $\X = (X_1 \rightrightarrows X_0)$ is \emph{sober} if the space of objects $X_0$ is sober.  Let $\LogGrpd_{\mathrm{sob}}$ denote the bi-subcategory of $\LogGrpd$ on sober logical groupoids.  Recall from \cite[\S 8.3]{myselfgrpds} that there is a left bi-adjoint 
	\[
	\begin{tikzcd}
		{\LogGrpd_{\mathrm{sob}}} && \ECTGrpd.
		\arrow[""{name=0, anchor=center, inner sep=0}, "{\hat{(-)}}", shift left=3, from=1-1, to=1-3]
		\arrow[""{name=1, anchor=center, inner sep=0}, shift left=3, hook', from=1-3, to=1-1]
		\arrow["\dashv"{anchor=center, rotate=-90}, draw=none, from=0, to=1]
	\end{tikzcd}
	\]
	Given a sober logical groupoid $\X = (X_1 \rightrightarrows X_0)$, its \emph{\'{e}tale completion} $\hat{\X}$ can be given two equivalent definitions.
	\begin{enumerate}
		\item Firstly, $\hat{\X} = (\hat{X}_1 \rightrightarrows X_0)$ is the topological groupoid whose space of objects remains the same as in $\X$, but where the space of arrows $\hat{X}_1$ is computed as the bi-pullback
		\[
		\begin{tikzcd}
			\Sh(\hat{X}_1) \ar{r} \ar{d} \ar[draw = none]{rd}[anchor = center, pos=0.125]{\lrcorner} & \Sh(X_0) \ar{d}{u_\X} \\
			\Sh(X_0) \ar{r}{u_\X} & \Sh(\X).
		\end{tikzcd}
		\]
		\item Alternatively, we recall that $\X$ can be viewed as a representing groupoid of models for a geometric theory $\theory$ (in particular, $\X$ eliminates parameters for some indexing).  Then $\hat{\X}$ is the groupoid of $\theory$-models whose objects are the same models that are included in $\X$ (endowed with the same indexing) but whose arrows are \emph{all} $\theory$-model isomorphisms between these models (and the topologies on $\hat{\X}$ are the corresponding logical topologies for the given indexing).
	\end{enumerate}

	\begin{prop}
		The inclusion of a sober logical groupoid $\X$ into its \'{e}tale completion $\hat{\X}$ is a weak equivalence.
	\end{prop}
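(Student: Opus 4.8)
The plan is to argue at the level of classifying toposes, using the second, model-theoretic description of $\hat{\X}$ recalled above. Since $\X$ is a logical groupoid, \cref{class-repr-grpds-for-theory} lets us fix a geometric theory $\theory$ classified by $\Sh(\X)$ and identify $\X$ with a groupoid of set-based $\theory$-models carrying an indexing $\Index \paronto \X$ for which the topologies on $\X$ are the logical ones, and such that $X_0$ is conservative and $\X$ eliminates parameters. By the alternative description, $\hat{\X}$ is then the groupoid with the same objects and indexing but with \emph{all} $\theory$-model isomorphisms as arrows, again with the logical topologies; the inclusion $i \colon \X \hookrightarrow \hat{\X}$ is a topological subgroupoid inclusion because each basic open $\lrclass{\cdots}_{\hat{\X}}$ of $\hat{X}_1$ restricts along $X_1 \subseteq \hat{X}_1$ exactly to the corresponding basic open $\lrclass{\cdots}_\X$.

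The first step is to check that $\hat{\X}$ still represents $\theory$. Conservativity is immediate since $\hat{X}_0 = X_0$. For parameter elimination, fix $\vec{m} \in \Index$ and let $\phi$ be a formula with $\overline{\class{\vec{x} = \vec{m}}}_\X = \classv{\phi}{x}_\X$. I claim $\overline{\class{\vec{x} = \vec{m}}}_{\hat{\X}} = \classv{\phi}{x}_{\hat{\X}}$: the inclusion $\supseteq$ is clear from $X_1 \subseteq \hat{X}_1$, and for $\subseteq$, if $\lrangle{\vec{n},N} \in \overline{\class{\vec{x}=\vec{m}}}_{\hat{\X}}$ is witnessed by a model isomorphism $M \xrightarrow{\alpha} N$ with $M$ interpreting $\vec{m}$ and $\alpha(\vec{m}) = \vec{n}$, then $\lrangle{\vec{m},M} \in \class{\vec{x}=\vec{m}}_\X \subseteq \classv{\phi}{x}_\X$ forces $M \vDash \phi(\vec{m})$, whence $N \vDash \phi(\vec{n})$ since $\alpha$ preserves the interpretation of $\phi$. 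Thus $\hat{\X}$ is a conservative groupoid of $\theory$-models eliminating parameters for $\Index$, so $\Sh(\hat{\X})$ also classifies $\theory$ by \cref{class-repr-grpds-for-theory} (this is also recorded in \cite[\S 8.3]{myselfgrpds}).

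It then remains to apply \cref{ex:Morita-equivalent-subgroupoids-when-theory} with ambient groupoid $\hat{\X}$ and subgroupoid $\X$: for each geometric formula $\phi$ the pullback $\Sh(i)^\ast \classv{\phi}{x}_{\hat{\X}}$ is $\classv{\phi}{x}_\X$, and the induced map $\Sub_{\Sh(\hat{\X})}(\classv{\phi}{x}_{\hat{\X}}) \to \Sub_{\Sh(\X)}(\classv{\phi}{x}_\X)$ is an isomorphism for every $\phi$ precisely because $\X$ is conservative and eliminates parameters --- which it is, by our choice. Since the definable sheaves form a generating set for $\Sh(\hat{\X})$, \cref{coro:weqv_iff_hyp} then yields that $\Sh(i) \colon \Sh(\X) \to \Sh(\hat{\X})$ is an equivalence, i.e.\ $i \in \weqv$. (Sobriety of $X_0$ plays no role in this argument; it is needed only to ensure, via \cref{sober-objs-and-elim-para-implies-sober-arrs}, that $\hat{\X}$ is genuinely étale complete.) The one step requiring real care is the orbit computation of the previous paragraph --- in essence the observation that passing to \emph{all} model isomorphisms cannot enlarge the orbit of a parameter tuple beyond the formula that already defines it in $\X$ --- and this is exactly where the hypothesis that $\X$ eliminates parameters is used.
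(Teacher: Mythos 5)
Your proof is correct and follows essentially the same route as the paper: both arguments come down to showing that the inclusion $\X \hookrightarrow \hat{\X}$ induces an equivalence $\Sh(\X) \simeq \Sh(\hat{\X})$ and then invoking \cref{classification-of-weqv}. The only difference is that the paper delegates the equivalence to a citation of \cite[Corollary 8.20]{myselfgrpds}, whereas you prove it in-house --- your orbit computation showing that passing to all model isomorphisms preserves parameter elimination, followed by \cref{ex:Morita-equivalent-subgroupoids-when-theory} and \cref{coro:weqv_iff_hyp}, is a correct and self-contained substitute for that citation.
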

	\begin{proof}
		By \cite[Corollary 8.20]{myselfgrpds}, the inclusion $\X \subseteq \hat{\X}$ induces an equivalence of topoi $\Sh(\X) \simeq \Sh(\hat{\X})$, and so we apply \cref{classification-of-weqv}.
	\end{proof}

	\section{The bi-equivalence}\label{sec:bieqv}


		We have now collected enough results to prove our main result, the bi-equivalence stated in \cref{thm:main-thm-intro}.  This will be achieved via an application of the following result by Pronk \cite{pronk}, a bi-categorical extension to Gabriel and Zisman's localisation result (\cite[Proposition I.1.3]{gabrielzisman}). 
	
	\begin{lem}[Proposition 24 \cite{pronk}]\label{lem:equiv_with_left_fracs}
		Let $G \colon \cat \to \dcat$ be a bi-functor and let $\Sigma$ be a class of 1-morphisms in $\cat$ admitting a left bi-calculus of fractions.  Suppose that
		\begin{enumerate}
			\item the bi-functor $G$ is essentially surjective on objects and fully faithful on 2-cells,
			\item for each $f \in \Sigma$, $G(f)$ is an equivalence,
			\item and for any arrow $G(c) \xrightarrow{g} G(d) \in \dcat$, there are a pair of arrows $c \xrightarrow{f} e \in \cat$ and $d \xrightarrow{\sigma} e \in \Sigma$ such that the triangle
			\[
			\begin{tikzcd}
				G(c) \ar{r}{g} \ar{rd}[']{G(f)} & G(d) \ar{d}{G(\sigma)} \\
				& G(e)
			\end{tikzcd}
			\]
			commutes up to isomorphism.
		\end{enumerate}
		Then there is a bi-equivalence of bi-categories $[\Sigma^{-1}]\cat \simeq \dcat$.
	\end{lem}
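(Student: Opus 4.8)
The statement is Proposition~24 of \cite{pronk}, recorded there for a \emph{right} bi-calculus of fractions, so the only real task is to bridge the left/right dichotomy by passing to opposite bi-categories. First I would observe that $\Sigma\op$ is a right bi-calculus of fractions on $\cat\op$, and that the explicit descriptions recalled in \cref{sec:prelims} give a bi-equivalence $\left([\Sigma^{-1}]\cat\right)\op \simeq \cat\op[(\Sigma\op)^{-1}]$: a $1$-morphism of $[\Sigma^{-1}]\cat$ is a cospan $c \xrightarrow{f} e \xleftarrow{\sigma} d$ with $\sigma \in \Sigma$, which read in $\cat\op$ is precisely a span with its left leg in $\Sigma\op$, and Pronk's prescription for $2$-cells and for the horizontal and vertical compositions is manifestly symmetric under reversing $1$-cells.

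Next I would check that the hypotheses transport to $G\op \colon \cat\op \to \dcat\op$ in the form required by the right-handed version. Essential surjectivity on objects and full faithfulness on hom-categories are self-dual conditions, so (i) survives; (ii) is transparent, a $1$-morphism lying in $\Sigma\op$ exactly when it lies in $\Sigma$ and $G\op$ inverting it exactly when $G$ does. For (iii): an arrow $\tilde g \colon G\op(a) \to G\op(b)$ of $\dcat\op$ is an arrow $g \colon G(b) \to G(a)$ of $\dcat$; feeding $g$ to the original hypothesis~(iii) produces $f \colon b \to e$ in $\cat$ and $\sigma \colon a \to e$ in $\Sigma$ with $G(\sigma)\circ g \cong G(f)$, and transcribing this equation into $\dcat\op$ gives $\tilde g \circ G\op(\sigma\op) \cong G\op(f\op)$ with $\sigma\op \colon e \to a$ in $\Sigma\op$ --- exactly the right-handed factorisation condition. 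Applying Proposition~24 of \cite{pronk} then yields a bi-equivalence $\cat\op[(\Sigma\op)^{-1}] \simeq \dcat\op$, and composing with the bi-equivalence above and passing back to opposites gives $[\Sigma^{-1}]\cat \simeq \dcat$.

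For a self-contained argument one instead uses (ii) together with the universal property of the localisation to factor $G$ (up to equivalence) through a bi-functor $\bar G \colon [\Sigma^{-1}]\cat \to \dcat$ and then shows $\bar G$ is a bi-equivalence. Essential surjectivity on objects is immediate from (i), since the two bi-categories share their objects and $\bar G$ agrees there with $G$; full faithfulness on $2$-cells follows from (i) and (ii), because a $2$-cell of $[\Sigma^{-1}]\cat$ is represented by a $2$-cell of $\cat$ and whiskering by the equivalences $G(\sigma)$ is itself full and faithful. The substance --- and what I expect to be the main obstacle --- is showing $\bar G$ is a local equivalence on $1$-cells: local essential surjectivity is precisely hypothesis~(iii) (it exhibits any $g \colon G(c) \to G(d)$ as $\bar G$ of the cospan $c \xrightarrow{f} e \xleftarrow{\sigma} d$), whereas local full faithfulness forces one to lift a $\dcat$-$2$-cell between $\bar G$ of two cospans to a $2$-cell of $[\Sigma^{-1}]\cat$. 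This requires a careful diagram chase: one passes to a common refinement of the two cospans using the left Ore condition, re-applies (iii) to pull the given $2$-cell back along the relevant equivalences, and invokes the left cancellability condition to see the lift is independent of the choices --- threading the coherence isomorphisms of $\bar G$ through this last step is the delicate point.
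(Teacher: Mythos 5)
The paper offers no proof of this lemma beyond the citation of Pronk's Proposition 24, and your reduction to that result by passing to opposite bi-categories --- checking that a left bi-calculus on $\cat$ is a right bi-calculus on $\cat\op$, that $\left([\Sigma^{-1}]\cat\right)\op$ agrees with Pronk's span-based localisation of $\cat\op$, and that hypotheses (i)--(iii) dualise to Pronk's conditions EF1--EF3 --- is exactly the justification the citation leaves implicit. Your dualisation is carried out correctly (in particular the transport of condition (iii) lands on the right span-shaped factorisation condition), so this is the same approach as the paper, with the details filled in.
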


	\begin{prop}\label{prop:weqv_is_calc}
		The class $\weqv$ of weak equivalences defines a left bi-calculus of fractions on $\ECTGrpd$.
	\end{prop}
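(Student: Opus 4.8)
The plan is to verify the three conditions defining a left bi-calculus of fractions in the sense of \cite{pronk}: that $\weqv$ is wide, satisfies the left Ore condition, and satisfies the left cancellability condition. Two ingredients carry almost all of the weight. First, for $w\in\weqv$ the geometric morphism $\Sh(w)$ is an equivalence by \cref{classification-of-weqv}, so that whiskering by $\Sh(w)$ induces an equivalence between the relevant hom-categories of $\Topos_\wep^\iso$. Second, by \cref{Sh-ff-on-2-cells} the restriction of $\Sh$ to $\ECTGrpd$ is full and faithful on $2$-cells. Wideness is exactly \cref{weqv-is-wide}: $\weqv$ contains all identities, is closed under composition (a composite of subgroupoid inclusions each inducing an equivalence of sheaf topoi again induces one), and is closed under continuous isomorphism by \cref{skula-dense-orbits-is-wide} and \cref{thick-is-wide}.

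For the left Ore condition, we are given $w\colon\Y\hookrightarrow\X$ in $\weqv$ and a continuous functor $f\colon\Y\to\Z$, with $\X,\Y,\Z$ all étale complete, and must produce a square
\[
\begin{tikzcd}
\Y \ar{r}{f} \ar[hook']{d}[']{w} & \Z \ar[hook']{d}{\Psi} \\
\X \ar{r}{\Phi} & \W
\end{tikzcd}
\]
commuting up to isomorphism with $\Psi\in\weqv$. Consider the geometric morphism $g:=\Sh(f)\circ\Sh(w)^{-1}\colon\Sh(\X)\to\Sh(\Z)$. Since $\Sh(\X)$ and $\Sh(\Z)$ are sheaf topoi of logical groupoids, \cref{geo_is_hom_of_grpd} supplies a cospan $\X\xrightarrow{\Phi}\W\xleftarrow{\Psi}\Z$ with $\Psi\in\weqv$ and a natural isomorphism $\Sh(\Psi)\circ g\cong\Sh(\Phi)$; moreover, by the construction in the proof of \cref{geo_is_hom_of_grpd}, $\W$ may be taken to be the groupoid of all $\kappa$-indexed models of a theory classified by $\Sh(\Z)$ for suitably large $\kappa$, which is étale complete by \cref{ex:forssell-groupoids-are-ec}. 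Whiskering the isomorphism $\Sh(\Psi)\circ g\cong\Sh(\Phi)$ on the right by $\Sh(w)$ and using $g\circ\Sh(w)\cong\Sh(f)$ produces a natural isomorphism $\Sh(\Psi\circ f)\cong\Sh(\Phi\circ w)$ of geometric morphisms $\Sh(\Y)\to\Sh(\W)$. As $\Y$ and $\W$ are étale complete, \cref{Sh-ff-on-2-cells} lifts this to a continuous isomorphism $\Psi\circ f\cong\Phi\circ w$, so the square above commutes up to isomorphism with $\Psi\in\weqv$, as required.

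For left cancellability, suppose $w\colon\Y\hookrightarrow\X$ lies in $\weqv$, let $f,g\colon\X\to\Z$ be continuous functors of étale complete groupoids, and let $\alpha\colon f\circ w\Rightarrow g\circ w$ be a $2$-cell; the required data is the cancelling $\weqv$-arrow $\id_\Z$ together with a $2$-cell $\beta\colon f\Rightarrow g$ satisfying $\beta\ast w=\alpha$, obtained as follows. Since $\Sh(w)$ is an equivalence, whiskering by $\Sh(w)$ is an equivalence of hom-categories, so $\Sh(\alpha)=\gamma\ast\Sh(w)$ for a unique $\gamma\colon\Sh(f)\Rightarrow\Sh(g)$; by \cref{Sh-ff-on-2-cells} applied to $\X$ and $\Z$ we have $\gamma=\Sh(\beta)$ for a unique continuous $\beta$, and faithfulness of $\Sh$ on $2$-cells then forces $\beta\ast w=\alpha$. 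The uniqueness of $\beta$, and hence of $(\id_\Z,\beta)$ up to the appropriate equivalence, follows by running this chain of bijections backwards.

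The main obstacle is the left Ore condition: it rests squarely on \cref{geo_is_hom_of_grpd} — the substantial result that every geometric morphism between sheaf topoi of logical groupoids is induced by a cospan of continuous functors with one leg in $\weqv$ — together with the bookkeeping needed to see that the mediating groupoid $\W$ can be chosen étale complete, so that \cref{Sh-ff-on-2-cells} applies and the whole construction remains inside $\ECTGrpd$. Once that is granted, the $2$-categorical axioms reduce to short diagram chases powered by \cref{Sh-ff-on-2-cells} and the fact that $\Sh$ sends $\weqv$ to equivalences.
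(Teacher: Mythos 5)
Your proposal is correct and follows essentially the same route as the paper: wideness via \cref{weqv-is-wide}, the left Ore condition by transporting the induced geometric morphism through \cref{geo_is_hom_of_grpd} and lifting the resulting $2$-cell with \cref{Sh-ff-on-2-cells}, and left cancellability by using that $\Sh$ sends $\weqv$ to equivalences and is fully faithful on $2$-cells, with the identity as the cancelling arrow. Your explicit observation that the mediating groupoid $\W$ of all $\Index''$-indexed models is étale complete (via \cref{ex:forssell-groupoids-are-ec}), so the construction stays inside $\ECTGrpd$, is a point the paper leaves implicit and is worth making.
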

	\begin{proof}
		Recall from \cite[\S 2.1]{pronk} that there are three conditions required to be a left bi-calculus of fractions.  The class $\weqv$ must be wide, satisfy the left Ore condition, and the left cancellability condition (all in the bi-categorical sense).
		\begin{enumerate}
			\item (Wideness) Wideness has already been shown in \cref{weqv-is-wide}.
			
			\item (Left Ore condition) Let
			\[
			\begin{tikzcd}
				\Y \ar[hook]{r}{\Psi} \ar{d}[']{\Phi} & \W \\
				\X 
			\end{tikzcd}
			\]
			be a span where $\Psi \in \weqv$.  We wish to find a square of continuous functors
			\begin{equation}\label{prop:weqv_is_calc:eq:square}
				\begin{tikzcd}
					\Y \ar[hook]{r}{\Psi} \ar{d}[']{\Phi} \ar[phantom]{rd}[description]{\cong}  & \W \ar{d}{\Phi'} \\
					\X \ar[hook]{r}{\Psi'} & \V
				\end{tikzcd}
			\end{equation}
			that commutes up to isomorphism, where $\Psi' \in \weqv$.  Consider the induced geometric morphism
			\[
			\begin{tikzcd}
				\Sh(\W) \ar{r}{\Sh(\Psi)^{-1}} \ar[phantom, shift right=0.5em]{r}[']{\sim} & \Sh(\Y) \ar{r}{\Sh(\Phi)} & \Sh(\X).
			\end{tikzcd}
			\]
			By applying \cref{geo_is_hom_of_grpd}, there is a cospan
			\[
			\begin{tikzcd}
				& \W \ar{d}{\Phi'} \\
				\X \ar[hook]{r}{\Psi'} & \V
			\end{tikzcd}
			\]
			such that $\Psi' \in \weqv$ and $\Sh(\Phi') \cong \Sh(\Psi') \circ \Sh(\Phi) \circ \Sh(\Psi)^{-1}$.  Now since $\Sh$ is fully faithful on 2-cells by \cref{Sh-ff-on-2-cells}, the isomorphism $\Sh(\Phi') \circ \Sh(\Psi) \cong \Sh(\Psi')\circ \Sh(\Phi)$ yields a continuous isomorphism $\Phi' \circ \Psi \cong \Psi' \circ \Phi$ as desired.
			
			\item (Left cancellability) Let
			\[
			\begin{tikzcd}
				\X \ar{r}{\Xi} & \Y \ar[shift left=2]{r}{\Phi} \ar[shift right=2]{r}[']{\Psi} & \W
			\end{tikzcd}
			\]
			be a fork of continuous functors of logical groupoids that commute up to continuous isomorphism, and where $\Xi \colon \X \to \Y \in \weqv$.  We wish to find another continuous functor $\Xi' \colon \W \to \V \in \weqv$ such that the fork
			\[
			\begin{tikzcd}
				\Y \ar[shift left=2]{r}{\Phi} \ar[shift right=2]{r}[']{\Psi} & \W \ar{r}{\Xi'} & \V
			\end{tikzcd}
			\]
			also commutes up to continuous isomorphism.  Since the induced fork of geometric morphisms
			\[
			\begin{tikzcd}
				\Sh(\X) \ar{r}{\Sh(\Xi)} \ar[phantom, shift right=0.5em]{r}[']{\sim} & \Sh(\Y) \ar[shift left=2]{r}{\Sh(\Phi)} \ar[shift right=2]{r}[']{\Sh(\Psi)} & \Sh(\W)
			\end{tikzcd}
			\]
			also commutes up to isomorphism, and so $\Sh(\Phi) \cong \Sh(\Psi)$, by \cref{Sh-ff-on-2-cells} we have that $\Phi \cong \Psi$, and so we can simply take $\id_\W$ as the desired weak equivalence.
			
			We must also show that, for any other weak equivalence of logical groupoids $\Xi'' \colon \W \to\V \in \weqv$ that makes the fork
			\[
			\begin{tikzcd}
				\Y \ar[shift left=2]{r}{\Phi} \ar[shift right=2]{r}[']{\Psi} & \W \ar{r}{\Xi''} & \V
			\end{tikzcd}
			\]
			commute up to isomorphism, then there are weak equivalences ${\mathrm{P}} \colon \W \to \V', \mathrm{P}' \colon \V \to \V' \in \weqv$ and a coherent choice of isomorphism $\mathrm{P} \circ \id_\W \xRightarrow{\sim} \mathrm{P}' \circ \Xi''$.  But we simply take $\mathrm{P}$ as $\Xi'' $ and $\mathrm{P}'$ as $\id_\V$.  It is trivial to show that the identity transformation $\Xi'' \circ \id_\W = \id_\V \circ \Xi''$ satisfies the necessary coherence condition expressed in \cite[\S 2.1]{pronk}.
		\end{enumerate} 
		Thus, $\weqv$ defines a left bi-calculus of fractions on $\ECTGrpd$.
	\end{proof}

	\begin{repeated-theorem}[\cref{thm:main-thm-intro}]
		There is a bi-equivalence
		\[
		[\weqv^{-1}]\ECTGrpd \simeq \Topos_\wep^\iso.
		\]
	\end{repeated-theorem}
	\begin{proof}
		The functor $\Sh \colon \ECTGrpd \to \Topos_\wep^\iso$ is essentially surjective by \cite{BM} and fully faithful on 2-cells by \cref{Sh-ff-on-2-cells}.  Every weak equivalence $\Y \hookrightarrow \X$ is sent by $\Sh$ to an equivalence of topoi by \cref{classification-of-weqv}.  By \cref{geo_is_hom_of_grpd}, every geometric morphism $f \colon \Sh(\X) \to \Sh(\Y)$ is induced, up to isomorphism, by a cospan $\X \to \W \hookleftarrow \Y$ where $\Y \hookrightarrow \W \in \weqv$.  Thus, by applying \cref{lem:equiv_with_left_fracs}, we obtain the desired bi-equivalence.
	\end{proof}
	
	We therefore immediately deduce a criterion for `Morita equivalence' of étale complete topological groupoids:
	
	\begin{coro}\label{morita-eqv-for-ectgrpd}
		Given étale complete topological groupoids $\X, \Y$, there is an equivalence of topoi $\Sh(\X) \simeq \Sh(\Y)$, i.e.\ $\X$ and $\Y$ are \emph{Morita equivalent}, if and only if there exists an étale complete topological groupoid $\W$ and embeddings of subgroupoids $\Phi \colon \X \hookrightarrow \W$ and $\Psi \colon \Y \hookrightarrow \W$ with $\Phi, \Psi \in \weqv$.
	\end{coro}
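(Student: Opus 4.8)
The plan is to prove the two implications separately, the backward one being essentially immediate. Given a cospan $\X \xrightarrow{\Phi} \W \xleftarrow{\Psi} \Y$ of subgroupoid inclusions with $\Phi,\Psi \in \weqv$, applying the bi-functor $\Sh$ and invoking \cref{classification-of-weqv} turns $\Phi$ and $\Psi$ into equivalences of topoi, whence $\Sh(\X) \simeq \Sh(\W) \simeq \Sh(\Y)$.

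For the forward implication, rather than routing through the localisation I would construct a single large étale complete groupoid receiving both $\X$ and $\Y$ as representing subgroupoids. Assume $\Sh(\X) \simeq \Sh(\Y)$ and fix a geometric theory $\theory$ classified by this topos; since $\Sh$ takes values in $\Topos_\wep$, the topos has enough points, so $\theory$ is complete for its set-based models. As étale complete groupoids are in particular open $T_0$ topological groupoids, \cref{class-repr-grpds-for-theory} lets me identify each of $\X$ and $\Y$ with a conservative groupoid of set-based $\theory$-models that eliminates parameters for some indexing, carrying the associated logical topologies. I then apply \cref{every-set-of-models-has-expansion} to the set $X_0 \cup Y_0$ of models: it yields a representing groupoid $\W = (W_1 \rightrightarrows W_0)$ of $\theory$ with $X_0 \cup Y_0 \subseteq W_0$, which by construction is the groupoid of all $\Index$-indexed $\theory$-models together with all their isomorphisms, for a sufficiently large index set $\Index$ which we may take to contain the indexing sets used by $\X$ and $\Y$. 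By \cref{ex:forssell-groupoids-are-ec} (via \cref{coro:etale_compl_characterisation}), such a $\W$ is étale complete.

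The remaining work is to check that the evident inclusions $\Phi \colon \X \hookrightarrow \W$ and $\Psi \colon \Y \hookrightarrow \W$ are topological subgroupoid inclusions lying in $\weqv$. Keeping on the models of $X_0$ the indexing they already carry (now viewed as an $\Index$-indexing), a basic open $\classv{\phi}{m}_\W$ of $W_0$ restricts on $X_0$ to the corresponding basic open of the logical topology of $\X$ when $\vec m$ lies in the index set of $\X$, and to $\emptyset$ otherwise; the same holds for the arrow topologies, and $W_1$ contains every isomorphism, so $\Phi$ really is a subgroupoid inclusion. Since $\X$ is conservative and eliminates parameters for this indexing — the surplus parameters of $\Index$ contributing only empty, hence definable, orbits — \cref{ex:Morita-equivalent-subgroupoids-when-theory}, together with \cref{coro:weqv_iff_hyp} (the definable sheaves generating $\Sh(\W) \simeq \topos_\theory$), shows $\Sh(\Phi)$ is an equivalence, i.e.\ $\Phi \in \weqv$; the same argument applies symmetrically to $\Psi$. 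This delivers the desired cospan. The main obstacle I anticipate is essentially bookkeeping: reconciling the indexing of $\X$ (resp.\ $\Y$) with the large indexing of $\W$ so that the inclusion is simultaneously a subspace inclusion and a weak equivalence; everything else is a direct appeal to results of \cref{sec:weqv} and \cref{sec:ectgrpd}. (Alternatively, the forward implication drops out of \cref{thm:main-thm-intro} and \cref{prop:weqv_is_calc} once one knows that, in a localisation by a left bi-calculus of fractions, two objects are equivalent exactly when joined by a cospan in the localising class — but verifying that general fact at the bicategorical level is itself some work, so the hands-on construction above seems preferable.)
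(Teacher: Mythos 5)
Your proof is correct, but it takes a genuinely different route from the paper's. The paper deduces the corollary \emph{immediately} from \cref{thm:main-thm-intro}: since the bi-equivalence $[\weqv^{-1}]\ECTGrpd \simeq \Topos_\wep^\iso$ is induced by $\Sh$, an equivalence $\Sh(\X) \simeq \Sh(\Y)$ is the same as an equivalence $\X \simeq \Y$ in the localisation, and in a left bi-category of fractions two objects are equivalent precisely when they are joined by a cospan lying in the localising class (the fact you flag as needing verification, which the paper records in \cref{sec:rightcalc} and takes from \cite{pronk}). You instead build the cospan by hand: fix a theory $\theory$ classified by the common topos, identify $\X$ and $\Y$ via \cref{class-repr-grpds-for-theory} with conservative, parameter-eliminating groupoids of indexed $\theory$-models, and embed both into the groupoid $\W$ of all $\Index$-indexed models for a large enough $\Index$ (\cref{every-set-of-models-has-expansion}, \cref{ex:forssell-groupoids-are-ec}); the verification that the inclusions are subspace inclusions inducing equivalences via \cref{ex:Morita-equivalent-subgroupoidswhen-theory} and \cref{coro:weqv_iff_hyp} goes through as you describe, the surplus parameters of $\Index$ indeed contributing only empty (hence $\bot$-definable) orbits. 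This is essentially the construction inside the proof of \cref{geo_is_hom_of_grpd} specialised to an equivalence, with the extra observation that, both groupoids being groupoids of models of the \emph{same} theory, the leg out of $\X$ can also be taken to be a subgroupoid inclusion in $\weqv$ rather than merely a continuous functor. What your approach buys is independence from \cref{thm:main-thm-intro}, \cref{prop:weqv_is_calc} and the general theory of bi-categories of fractions, at the cost of redoing the model-theoretic bookkeeping; the paper's approach is a one-line consequence of machinery already established. (Minor typographical note: the label in your citation should read \texttt{ex:Morita-equivalent-subgroupoids-when-theory} exactly as in the source.)
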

	
	\begin{rem}
		Therefore, a property of topological groupoids is preserved by the weak equivalences identified in \cref{sec:weqv} precisely if it is a `Morita invariant' property of étale complete topological groupoids, in that if $\X$ satisfies property $P$, then so too does every $\Y$ that is Morita equivalent to $\X$.
	\end{rem}
	
	\begin{rem}\label{rem:advert-for-AZ-extension}
		\cref{morita-eqv-for-ectgrpd} can be applied to so-called `reconstruction theorems' in model theory.  Let $\theory_1, \theory_2$ be countably categorical theories, and let $M, N$ be the associated unique countable models.  Recall that, by the Ryll-Nardzewski theorem \cite[Theorem 7.3.1]{hodges}, every type in $\theory_1$ and $\theory_2$ is isolated, and that both $M$ and $N$ are ultrahomogeneous (\cite[\S 10]{hodges}).  Hence, by \cite[Theorem 3.1]{caramellogalois} (see also \cref{ex:elim_para} and \cref{class-repr-grpds-for-theory}), we have that $\B\Aut(M)$ and $\B\Aut(N)$ classify the theories $\theory_1$ and $\theory_2$ respectively.  If $\Aut(M)$ and $\Aut(N)$ are homeomorphic as topological groups, then it immediately follows that $\topos_{\theory_1} \simeq \B \Aut(M) \simeq \B\Aut(N) \simeq \topos_{\theory_2}$, and so $\theory_1$ and $\theory_2$ are \emph{Morita equivalent} -- by which we mean that $\topos_{\theory_1}\simeq \topos_{\theory_2}$.  Conversely, if $\topos_{\theory_1} \simeq \topos_{\theory_2}$, then by \cref{morita-eqv-for-ectgrpd} there is a cospan of weak equivalences $\Aut(M) \hookrightarrow \W \hookleftarrow \Aut(N)$.  If $\W$ is also a topological group, then by \cref{ex:etale-complete-groups}, it follows that $\Aut(M) \cong \Aut(N)$.
		
		This has a strong resemblance to a classical result in model theory \cite[\S 1]{AZ} which asserts that the theories $\theory_1, \theory_2$ are \emph{bi-interpretable} if and only if there is a homeomorphism of topological groups $\Aut(M) \cong \Aut(N)$.  Bi-interpretability of theories coincides with Morita equivalence under mild assumptions (see \cite[\S 5.2]{mceldowney}).  \cref{morita-eqv-for-ectgrpd} therefore paves the way to a groupoidal extension of the classical result of Ahlbrandt-Ziegler \cite{AZ} (which is orthogonal to the recent extension of Ben Yaacov \cite{yaacov}, also employing topological groupoids).  This extension will be fully explained in a later work.
	\end{rem}

	\section*{Acknowledgements}
	
	This work has its origin in my PhD research while at the University of Insubria and completed while employed at Queen Mary University of London.  My thanks go to my PhD supervisor, Olivia Caramello, and my colleagues at Queen Mary, Ivan Toma\v{s}i\'{c} and Behrang Noohi, for their useful comments.

	\renewcommand{\baselinestretch}{0.5}\normalsize


\end{document}